\numberwithin{equation}{section}
\date{}
\newtheorem{theorem}{Theorem}[section]
\newtheorem{lemma}[theorem]{Lemma}
\newtheorem{corollary}[theorem]{Corollary}
\newtheorem{remark}[theorem]{Remark}
\newcommand{\be}{\begin{equation}}
\newcommand\ee{\end{equation}}
\newcommand\bes{\begin{eqnarray}}
\newcommand\ees{\end{eqnarray}}
\newcommand\bess{\begin{eqnarray*}}
\newcommand\eess{\end{eqnarray*}}
\title{Space-time estimates of 3D bipolar compressible Navier-Stokes-Poisson system with unequal viscosities}
\author{
Zhigang Wu$^{1}$,\ \ \ \
Weike Wang$^{2}$\thanks{E-mail: wkwang@sjtu.edu.cn}
\vspace{2mm}\\
\textit{\small 1. Department of Mathematics, Donghua University,}\\\textit{\small Shanghai, 201620, P.R. China}\\
\textit{\small 2. School of Mathematical Sciences, CMA-Shanghai and Institute of Natural Science,}\\\textit{\small Shanghai Jiao Tong
University, Shanghai, 200240, P.R.China.}
 }
\begin{document}

\pagestyle{myheadings} \markboth{Wu & Wang}{Bipolar NSP System with unequal viscosities}\maketitle
\renewcommand{\thefootnote}{\fnsymbol{footnote}}


\renewcommand{\thefootnote}{\arabic{footnote}}


{{\bf  Abstract:}} The space-time behaviors for Cauchy problem of 3D compressible bipolar Navier-Stokes-Poisson system (BNSP) with unequal viscosities are given. The space-time estimate of electric field $\nabla\phi=\nabla(-\Delta)^{-1}(n-Z\rho)$ is the most important in deducing generalized Huygens' principle for BNSP  and it requires to prove that the space-time estimate of $n-Z\rho$ only contains diffusion wave due to the singularity of the operator $\nabla(-\Delta)^{-1}$. A suitable linear combination of unknowns reformulating the original system into two small subsystems for the special case (with equal viscosities) in \cite{wu4} is so crucial for both linear analysis and nonlinear estimates, especially for the space-time estimate of $\nabla\phi$. However, the benefits from this reformulation will not exist any longer for general case. Here, we should study an $8\times8$ Green's matrix directly. More importantly,  each entry in Green's matrix containing wave operators in low frequency, which will generally produce the Huygens' wave, as a result, one cannot achieve that the space-time estimate of $n-Z\rho$ only contains the diffusion wave as before. We overcome this difficulty by taking more detailed spectral analysis and developing new estimates arising from subtle cancellations in Green's function.

\textbf{{\bf Key Words}:}
 Green's function; bipolar Navier-Stokes-Poisson; unequal viscosities.

\textbf{{\bf MSC2010}:} 35B40; 35E05; 35P15; 35Q35.

\section{Introduction}

The bipolar Navier-Stokes-Poisson (BNSP) system in plasma physics describes dynamics of two separate compressible fluids of ions and electrons
interacting with their self-consistent electromagnetic field. In the present paper, we will take into account
the following 3D bipolar compressible Navier-Stokes-Poisson system with unequal viscosities
\begin{equation}\label{1.1}
\left\{\begin{array}{l}
\rho_t+\operatorname{div}(\rho u)=0, \\
(\rho u)_t+\operatorname{div}(\rho u \otimes u)+\nabla P_1(\rho)=\mu_1\Delta u+\mu_2\nabla{\rm div}u+Z\rho e\nabla\phi, \\
n_t+\operatorname{div}(n v)=0, \\
(nv)_t+\operatorname{div}(nv \otimes v)+\nabla P_2(n)=\bar{\mu}_1\Delta v+\bar{\mu}_2\nabla{\rm div}v-ne\nabla\phi,\\
\Delta\phi=4\pi e(Z\rho-n),\ \lim\limits_{|x|\rightarrow\infty}\phi=0.
\end{array}\right.
\end{equation}
The unknowns $\rho(x,t)$ and $u(x,t)$ are the density and velocity of the ions, $n(x,t)$ and $v(x,t)$ are the density and velocity of the electrons, and $\nabla\phi$ is the electric field. The given functions $P_1(\rho)$ and $P_2(n)$ are the pressures of the ions and electrons, respectively. The ions have the charge $Ze$ and the electrons have the charge $-e$ with positive constants $Z$ and $e$. The viscosity coefficients satisfy the usual physical conditions: $\mu_1>0, 3\mu_1+2\mu_2>0, \bar{\mu}_1>0, 3\bar{\mu}_1+2\bar{\mu}_2>0$. Without loss of generality, we set $e=1$.

Long time behavior is one of the important topics in studying PDE models of compressible flows. For the compressible Navier-Stokes equations (NS) in 3D, the global existence and $L^2$-decay rate of the solution were given in Matsumura and Nishida \cite{matsumura1,matsumura2}. When considering a potential force term, the related results were developed by Duan $et\ al.$ in \cite{duan1,duan2}. Later on, Li and Zhang \cite{li6} refined the decay rate when the initial data belongs to some negative Sobolev space, Guo and Wang \cite{guo} directly obtained $L^2$-decay rate by a new energy method without the spectrum analysis. However, the usual $L^2$ estimates only exhibit the dissipative properties of solutions, but it cannot bring the information on wave propagation. To explicitly describe the wave behavior of the solution, one needs to consider pointwise space-time behaviors especially for the hyperbolic-parabolic systems. The pioneering works were Zeng \cite{zeng1} for 1D isentropic NS system and Liu and Zeng \cite{lz2} for 1D quasilinear hyperbolic-parabolic systems by using Green's function method. The isentropic Navier-Stokes equations in multi-dimensions have been studied by Hoff and Zumbrun \cite{hoff1}, where $L^p (p \geq1)$-estimates were obtained. Later on, Liu and Wang \cite{lw} investigated pointwise bounds for Green's function of the linearized NS system, which is consistent with those identified by Hoff and Zumbrun for the related linear artificial viscosity system in \cite{hoff2}. Additionally, Liu and Wang \cite{lw} studied the nonlinear problem for the isentropic NS system in odd dimensions, and showed that the so-called generalized Huygens' principle. In particular, the generalized Huygens' principle in \cite{lw} means that the space-time behavior of the solution exhibits both diffusion wave and Huygens' wave, where the Huygens' wave is a propagating diffusion wave and its propagation speed is the same as base sound speed. These two kinds of waves will be further introduced hereinbelow. After that, a series of research on the space-time behaviors for the other fluid models were established, for instance, the isentropic and non-isentropic Navier-Stokes equations in \cite{deng,du,liw,ls}, damped Euler equations in \cite{wang2,wu5}, Boltzmann equations and thermal non-equilibrium flows \cite{lyz,ly3,yu1,zeng2}.

Due to the physical importance and mathematical challenges, there were many efforts on the Navier-Stokes-Poisson system. When one only considers the dynamics of one fluid in plasmas, the system (\ref{1.1}) becomes unipolar compressible Navier-Stokes-Poisson system (NSP). For NSP, we just review some results on the long time behaviors for classical solutions. Li et al. \cite{li,li3,zhang} considered optimal $L^2$ decay rate by using spectral analysis and energy method, Wang \cite{wangy} directly obtained $L^2$-decay rate by a pure energy method without the spectrum analysis, and Wang and Wu \cite{wang,wu2,wu1} obtained pointwise space-time behavior, where they found that the electric field $\nabla\phi$ impedes the propagation of acoustic wave and one cannot derive the generalized Huygens' principle as that for the NS system in \cite{lw,ls}. We would like refer to other works on NSP, for instance, \cite{bie,cai,chi,degond,hao,tan}. As for BNSP with equal viscosities and equal pressure functions, Duan and Yang \cite{duan4} studied stability of rarefaction wave and boundary layer for outflow problem in dimension one. With regard to the multidimensional case, Li et al. \cite{li1,li2}, Wang and Xu \cite{wang3} and Zou \cite{zou} obtained optimal $L^2$-decay rate based on different approaches. The pointwise space-time behavior of BNSP was given in \cite{wu3,wu4}, which is completely different from NSP. In particular, \cite{wu3,wu4} showed that the solution of BNSP exhibits the generalized Huygens' principle as the NS system due to the effect by the transportation and interplay between two fluids.
When the viscosity coefficients and the pressure functions of two fluids are not equal to each other for the system (\ref{1.1}), there is only one result given in Wu et al. \cite{wug} recently, where they obtained global existence and time decay rates of the classical solution when the initial perturbation is small in $H^3$ by a new energy method.

The goal of this paper is to derive the generalized Huygens' principle for the system \eqref{1.1} based on the existence results in \cite{wug}. To better present the innovation point of this article compared with the special case for \eqref{1.1} in \cite{wu4}, we shall first review the main ideas and the steps in \cite{wu4}, and then introduce new difficulties and new ideas for the general case in the present paper.
In \cite{wu4}, due to the equal viscosities and equal pressure functions, we can first reformulate the linearized original system as a linearized NS system (NS) and a linearized unipolar Navier-Stokes-Poisson system (NSP), which coupled each other through the nonlinear terms. Additionally, we can use the partial conservative structure: the total density and total momentum of two carriers (corresponding to the subsystem NS) are conservative. This partial conservation is just used to derive the generalized Huygens' principle, since only Green's function for the subsystem NS contains the Huygens' wave (H-wave $(1+t)^{-2}\big(1+\frac{(|x|-ct)^2}{1+t}\big)^{-\frac{3}{2}}$), where $c$ is the base sound speed. In fact, since the Huygens' wave decays slower than the diffusion wave (D-wave $(1+t)^{-\frac{3}{2}}\big(1+\frac{|x|^2}{1+t}\big)^{-\frac{3}{2}}$) in $L^p(\mathbb{R}^3)$-space when $1<p<2$, one has to ``borrow" one derivative from the nonlinear terms to the Huygens' wave when dealing with the nonlinear coupling; see the third estimate in Lemma \ref{A.5}. After that, one can obtain the desired generalized Huygens' principle. All of the previous results in this direction are based on this conservative structure; see \cite{deng,du,ls,lw,wang,wu4}.

We return to the system (\ref{1.1}). Note that the non-conservation of system (\ref{1.1}) is mainly from two terms $Z\rho\nabla\phi$ and $-n\nabla\phi$ in the nonlinear part, since the other nonlinear terms are of the divergence form. As mentioned above, the conservation is critical when deducing the generalized Huygens' principle. To this end, we still need to use the divergence form of the other nonlinear terms when dealing with the nonlinear coupling. Therefore, we consider the variables $\rho,\ m=\rho u,\ n,\ \omega=nv$, and the initial data for the system \eqref{1.1} is given as follows:
\begin{equation}\label{1.2}
(\rho,m,n,\omega)(x,t)|_{t=0}
 =(\rho_0,m_1,n_0,\omega_0)(x)\rightarrow(\frac{\bar{\rho}}{Z},0,\bar{\rho},0),\ \ {\rm as}\ |x|\rightarrow\infty,
\end{equation}
with the steady background density $\bar{\rho}>0$.

Next, we shall state new difficulties for the general case of (\ref{1.1}). First, one cannot separately consider two subsystems: a NS system and a unipolar NSP system by taking a linear combination of the unknowns as in \cite{wu4}. Here we have to directly study the original system (8$\times$8 Green's matrix). Second, there is no appropriate partial conservative structure of the original system anymore, meanwhile, there actually exists H-wave in Green's function due to the presence of wave operators in the low frequency part from the analysis in Section 2.

The biggest difficulty is in proving that the space-time estimate of $Z\rho-n$ only contains D-wave, which is important for us to derive the pointwise estimate of the electric field $\nabla\phi$ by the relation $\nabla\phi=\nabla(-\Delta)^{-1}(n-Z\rho)$. Of course, deriving this kind of pointwise estimate for $Z\rho-n$ is also the most important ingredient for the special case in \cite{wu4}. Once the space-time estimates for $Z\rho-n$ contain H-wave, one cannot derive the desired pointwise estimate of $\nabla\phi$ due to the non-locality and singularity of the operator $\nabla(-\Delta)^{-1}$ in the relation $\nabla\phi=\nabla(-\Delta)^{-1}(n-Z\rho)$. As a result, one cannot close the ansatz for the nonlinear problem by using the representation of the solution from Duhamel principle. In fact, for the special case in \cite{wu4}, a crucial observation is the space-time estimates for Green's function corresponding to $Z\rho-n$, since it is the same as that of the unipolar NSP system. Based on this fact and a new nonlinear convolution developed in \cite{wu4}
\begin{equation}\label{0.2(0)}
 \begin{array}{ll}
\displaystyle\int_0^t{\rm D}$-${\rm wave}(\cdot,t-s)\ast_x[({\rm D}$-${\rm wave})({\rm H}$-${\rm wave})](\cdot,s)ds \lesssim {\rm D}$-${\rm wave},
 \end{array}
 \end{equation}
we can get the desired space-time estimate for $\nabla\phi$.
However, for the general case (\ref{1.1}), we know that each entry of Green's function  in the low frequency part contains wave operators from the representation of Green's function of (\ref{1.1}) in Fourier space in Section 2. In general, it will produce H-wave in the space-time estimate of each entry in Green's function. Once the low frequency part of Green's function contains H-wave, it cannot prevent the presence of H-wave of the solution (for instance, $Z\rho-n$). Hence, we should first develop a new estimate for the low frequency part of Green's function, which reads that the space-time behavior of it only exhibits D-wave, even when there actually exist wave operators in the low frequency, see Lemma \ref{l 3.6}. We achieve this by very subtle cancellations in the low frequency. Furthermore, we need to establish the following nonlinear convolution based on partial conservative structure:
\begin{equation}\label{0.2(1)}
 \begin{array}{ll}
\displaystyle\int_0^t{\rm D}$-${\rm wave}(\cdot,t-s)\ast_x\partial_x({\rm H}$-${\rm wave})^2(\cdot,s)ds \lesssim {\rm D}$-${\rm wave}.
 \end{array}
 \end{equation}
The details can be found in Section 3.5 and Section 4. In fact, to obtain the desired space-time estimate of $Z\rho-n$ as mentioned above, we separately use the different convolution estimates (\ref{0.2(0)}) and (\ref{0.2(1)}) to deal with different nonlinear terms.

Now, we are ready to state the main results on the space-time behavior.
\begin{theorem}\label{l 1}Let $P_1(\rho)$ and $P_2(n)$ be smooth functions and satisfy $P_1'(\rho),P_2'(n)>0$ with $\rho,n>0$. Assume that $(\rho_0-\frac{\bar{\rho}}{Z},m_0,n_0-\bar{\rho},\omega_0)\in H^6(\mathbb{R}^3)$ with  a constant $\bar{\rho}>0$ and $\varepsilon_0=:\!\!\|(\rho_0-\frac{\bar{\rho}}{Z},m_0,n_0-\bar{\rho},\omega_0)\|_{H^6(\mathbb{R}^3)}$ small. Then there is a unique global classical solution $(\rho,m,n,\omega)$ of the Cauchy problem (\ref{1.1})-(\ref{1.2}). If further, for $|\alpha|\leq 2$,
\begin{equation}\label{1.3}
|D_x^\alpha(\rho_{0}-\frac{\bar{\rho}}{Z},m_0,n_0-\bar{\rho},\omega_0)|\leq \varepsilon_0(1+|x|^2)^{-r},\ r>\frac{21}{10},
\end{equation}
\begin{equation}\label{1.3(0)}
|\nabla\phi_0(x)|\leq \varepsilon_0(1+|x|^2)^{-r_1},\ r_1>\frac{3}{2}.
\end{equation}
Then when $ZP_1'(\frac{\bar{\rho}}{Z})=P_2'(\bar{\rho})\triangleq c$ and $\mu_i\neq\bar\mu_i$ with $i=1,2$, it holds for $|\alpha|\leq2$ that
\begin{equation}\label{1.4}
|D_x^{\alpha}(\rho-\frac{\bar{\rho}}{Z},m,n-\bar{n},\omega)|\leq C(1+t)^{-2}\Big(1+\frac{(|x|-ct)^2}{1+t}\Big)^{-\frac{3}{2}}\!
+C(1+t)^{-\frac{3}{2}}\Big(1+\frac{|x|^2}{1+t}\Big)^{-\frac{3}{2}}.
\end{equation}
\end{theorem}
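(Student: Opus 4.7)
The plan is to establish \eqref{1.4} through a Green's function framework closed by a bootstrap on an ansatz matching the target profile. First, I introduce the perturbation $U = (\rho-\bar\rho/Z,\, m,\, n-\bar\rho,\, \omega)$ and rewrite \eqref{1.1} with $\nabla\phi$ eliminated via $\nabla\phi = \nabla(-\Delta)^{-1}(n-Z\rho)$, obtaining an $8\times 8$ system whose linearization has matrix symbol $\hat A(\xi)$. The Fourier representation of the linear propagator $\hat G(t,\xi)=e^{t\hat A(\xi)}$ is then decomposed via a standard frequency cutoff into low-, medium-, and high-frequency pieces. In the low-frequency regime I would perform an asymptotic expansion of the eigenvalues of $\hat A(\xi)$ in $|\xi|$: generically these split into two acoustic branches $\lambda_\pm=\pm ic|\xi|+O(|\xi|^2)$ responsible for Huygens propagation, plus parabolic branches encoding viscous dissipation and the Debye-type screening coming from $\nabla(-\Delta)^{-1}$.

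The crux of the linear analysis, and hence of the whole argument, is the refined low-frequency estimate for the $Z\rho-n$ row of $\hat G$ embodied in Lemma \ref{l 3.6}. I would compute the spectral projectors onto each branch explicitly enough to read off the coefficient of $e^{\pm ic|\xi|t}$ in every entry of that row, then use the hypothesis $ZP_1'(\bar\rho/Z)=P_2'(\bar\rho)=c$ to exhibit an algebraic cancellation forcing these coefficients to vanish to the order needed to kill the Huygens profile after inverse Fourier. The surviving contribution in that row is purely parabolic in low frequencies and therefore yields only a D-wave pointwise. The medium- and high-frequency contributions are treated by standard resolvent and symbol-smoothness arguments and produce decay faster than the D-wave, so they are absorbed into the D-wave bound.

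With the linear bounds in place, I pass to the nonlinear problem through Duhamel's formula $U(t)=G(t)\ast U_0+\int_0^t G(t-s)\ast N(U)(s)\,ds$, splitting $N$ into its divergence-form convective/viscous piece and the non-divergence electric pieces $Z\rho\nabla\phi$ and $-n\nabla\phi$. For the divergence piece I move a derivative onto $G$ and invoke \eqref{0.2(0)} to convert the convolution of a D-wave with a product of a D-wave and an H-wave back into a D-wave. For the electric pieces I use \eqref{0.2(1)}, which absorbs the $\partial_x(\mathrm H\text{-wave})^2$ contribution into a D-wave. The pointwise bound on $\nabla\phi$ is then obtained directly from the pure D-wave bound on $n-Z\rho$ through $\nabla(-\Delta)^{-1}$; this step would collapse if $n-Z\rho$ carried any H-wave, since the non-local $\nabla(-\Delta)^{-1}$ would smear the sound-cone singularity. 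Setting the ansatz that $D_x^\alpha U$ for $|\alpha|\le 2$ is bounded by the right-hand side of \eqref{1.4} and that $\nabla\phi$ enjoys a D-wave bound, the two convolution estimates close the bootstrap with a factor of $\varepsilon_0$ to spare, and \eqref{1.4} follows by continuation from the global existence supplied by \cite{wug}.

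The principal obstacle is the cancellation in Lemma \ref{l 3.6}: identifying the exact algebraic structure that eliminates the wave-operator contributions from the $Z\rho-n$ row of the $8\times 8$ symbol requires a careful spectral computation, and it is precisely the point where the matching-sound-speed hypothesis enters. A secondary but nontrivial difficulty is the convolution \eqref{0.2(1)}, whose proof calls for stationary-phase-type work near $|x|=ct$ because $(\mathrm H\text{-wave})^2$ is not as integrable as a single H-wave; the extra derivative placed on $G$ is essential for that bound to close. Every other step then reduces to standard Duhamel bookkeeping on pointwise D- and H-wave profiles.
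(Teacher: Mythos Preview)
Your overall architecture matches the paper's---frequency-decomposed Green's function, the cancellation in the $\rho-n$ row (the paper's Theorem \ref{l 3.7}, with Lemma \ref{l 3.6} as the technical core), Duhamel, and a bootstrap on the ansatz. But you have the two nonlinear convolution estimates assigned to the wrong pieces, and this is not cosmetic. In the estimate for $\rho-n$, the electric terms $\rho\nabla\phi$ and $n\nabla\phi$ are \emph{not} in divergence form and carry no $\partial_x$ to transfer; they are handled by \eqref{0.2(0)} precisely because $\nabla\phi$ is already a D-wave, so the product has the structure $(\text{D-wave})(\text{H-wave})$, not $(\text{H-wave})^2$. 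Conversely, the convective and viscous pieces \emph{are} divergence form, but their quadratic part is generically $(\text{H-wave})^2$; moving the derivative onto the D-wave row of $G$ and invoking \eqref{0.2(1)} (the paper's Lemma \ref{l 4.11}) is what closes them. Your assignment fails on both sides: \eqref{0.2(1)} cannot be applied to $\rho\nabla\phi$ since there is no derivative to borrow, and \eqref{0.2(0)} does not cover an $(\text{H-wave})^2$ input.

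You are also missing a separate mechanism that the paper needs for the $m$ and $\omega$ estimates. There the relevant Green's-function entries $G_{22}^l$, $G_{24}^l$ themselves contain H-wave, and the non-divergence electric terms again cannot donate a derivative. The paper resolves this by a second cancellation---this time between $G_{22}^l$ and $G_{24}^l$---combined with the algebraic identity $(\rho-n)\nabla\phi=\Delta\phi\,\nabla\phi=\operatorname{div}\!\big(\nabla\phi\otimes\nabla\phi-\tfrac{1}{2}|\nabla\phi|^2 I\big)$, which manufactures the missing derivative. Without this step the H-wave part of $G$ convolved with the electric terms would not close at the required rate, and the bootstrap for $m,\omega$ would fail.
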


\begin{remark}The assumptions on the initial data are the same as the special case in \cite{wu4} for the bipolar compressible Navier-Stokes-Poisson system with $\mu_i=\bar\mu_i$, however, the process of the proof is much more complicated from both the linear analysis and the nonlinear estimates. In fact, to get the space-time estimates as above under the same assumptions as in \cite{wu4}, the base sound speed $c:=ZP_1'(\frac{\bar{\rho}}{Z})=P_2'(\bar{\rho})$ is essential to get the appropriate cancellation in the linear analysis to guarantee that the space-time behavior of $Z\rho-n$ does not contain the Huygens' wave, see (\ref{3.24(0)})-(\ref{3.24(1)}) and (\ref{4.10}). When $ZP_1'(\frac{\bar{\rho}}{Z})\neq P_2'(\bar{\rho})$ and $\mu_i\neq\bar\mu_i$, due to loss of the least cancellation in the low frequency part of Green's function, we believe that at the current stage it is very hard to obtain the above space-time result even though one puts additional assumptions on the initial data, since the desired nonlinear estimates cannot be obtained to close the ansatz.
\end{remark}

\begin{remark}
The existence and $L^2$-decay rate of the solution for the problem (\ref{1.1})-(\ref{1.2}) in $H^l$-framework with $l\geq3$ have been given in \cite{wug} by a pure energy method. Here we mainly focus on the linear analysis and the space-time behavior. The $H^6$-framework is needed here due to the quasi-linearity of the system and the singularity in the high frequency part of Green's function.
\end{remark}

\begin{remark}The generalized Huygens' principle for the Cauchy problem (\ref{1.1})-(\ref{1.2}) is given, which is the same as the compressible Navier-Stokes system in \cite{du,lw}. The byproduct, $L^p$-estimates also imply the dominant part of $(\rho-\frac{\bar{\rho}}{Z},m,n-\bar{n},\omega)$ is H-wave when $p\leq2$ and D-wave when $p\geq2$. In particular,
\begin{equation*}\label{2.1}
\begin{array}{rl}
&\|(\rho-\frac{\bar{\rho}}{Z},m,n-\bar{n},\omega)(\cdot,t)\|_{L^p(\mathbb{R}^3)}\leq
\bigg\{\begin{array}{ll}
  C(1+t)^{-(2-\frac{5}{2p})}, & \ \ \ 1<p\leq2, \\[0.8mm]
 C(1+t)^{-\frac{3}{2}(1-\frac{1}{p})}, & \ \ \ 2\leq p\leq\infty.
\end{array}
\end{array}
\end{equation*}
This $L^p$-decay rate is a generalization of $L^2$-decay rate in \cite{wug}.
\end{remark}

\textbf{Notation.} We give some notations used in this paper. $C$ denotes a generic positive constant which may vary in different lines. We use $H^s(\mathbb{R}^n)=W^{s,2}(\mathbb{R}^n)$, where $W^{s,p}(\mathbb{R}^n)$ is the usual Sobolev space with its norm $\|f\|_{W^{s,p}(\mathbb{R}^n)}=\sum\limits_{|\alpha|=0}^s\|D_x^\alpha f\|_{L^p(\mathbb{R}^n)}$. Here $D_x^\alpha=\partial_{x_1}^{\alpha_1}\partial_{x_2}^{\alpha_2}\cdots\partial_{x_n}^{\alpha_n}$ with $\alpha=(\alpha_1,\alpha_2,\cdots,\alpha_n)$. For brevity, sometimes we also use $D_x^k$ to denote $D_x^\alpha$ with $|\alpha|=k$. Additionally, in general, use $\nabla$ to denote the gradient operator, and use $\Delta$ to denote the Laplace operator.

The remainder of the paper is organized as follows: in Section 2, we give the representation of Green's function in the Fourier space and take spectral analysis. Section 3 establishes the pointwise estimates for Green's function. In Section 4, we deduce the pointwise estimates for the nonlinear system and prove Theorem \ref{l 1}. In the Appendix, some useful lemmas about the space-time behaviors for Green's function and the nonlinear coupling are stated.

\section{Green's function}
\subsection{Linearization and Reformulation}
\indent\indent We first reformulate the system (\ref{1.1}). In what follows, we assume the steady state of the Cauchy problem \eqref{1.1}-\eqref{1.2} is $(\frac{\bar \rho}{Z}, 0,\bar \rho, 0)$. For simplicity, we still use $(\rho,m,n,\omega)$ to denote the perturbation $(Z\rho-\bar{\rho},Zm,n-\bar{\rho},\omega)$ without confusion. In other words, we can set $Z=1$ without loss of generality. Then the system (\ref{1.1}) can be rewritten in the perturbation form as
\begin{equation}\label{2.1}
\left\{\begin{array}{l}
\rho_{t}+\operatorname{div} m=0, \\
m_{t}+c_1^2\nabla \rho-\frac{\mu_1}{\bar{\rho}}\Delta m-\frac{\mu_2}{\bar{\rho}}\nabla{\rm div}m-\bar{\rho}\nabla\phi=F_{1}(\rho,m,n,\omega), \\
n_{t}+\operatorname{div}w=0, \\
\omega_{t}+c_2^2\nabla n-\frac{\bar{\mu}_1}{\bar{\rho}}\Delta \omega-\frac{\bar{\mu}_2}{\bar{\rho}}\nabla{\rm div}\omega+\bar{\rho}\nabla\phi=F_{2}(\rho,m,n,\omega), \\
\Delta\phi=\rho-n,
\end{array}\right.
\end{equation}
where $c_1^2=ZP_1'(\frac{\bar{\rho}}{Z})$, $c_2^2=P_2'(\bar{\rho})$ and the nonlinear terms
\begin{equation}\label{2.2}
\begin{array}{rl}
F_{1}=&\!\!\!\displaystyle\nabla(ZP_1'(\frac{\rho\!+\!\bar\rho}{Z})-c_1^2\rho)\!-\!\operatorname{div}\Big(\frac{m\otimes m}{\rho+\bar{\rho}}\Big)\!+\!\mu_1\Delta\big(\frac{m}{\rho\!+\!\bar{\rho}}\!-\!\frac{m}{\bar{\rho}}\big)\!+\!\mu_2\nabla{\rm div}\big(\frac{m}{\rho+\bar{\rho}}\!-\!\frac{m}{\bar{\rho}}\big)\!+\!\rho\nabla\phi,\\[2mm]
F_{2}=&\!\!\!\displaystyle\nabla(P_2'(n\!+\!\bar\rho)-c_2^2n)\!-\!\operatorname{div}\Big(\frac{\omega\otimes \omega}{n\!+\!\bar{\rho}}\Big)\!+\!\bar{\mu}_1\Delta\big(\frac{\omega}{n\!+\!\bar{\rho}}\!-\!\frac{\omega}{\bar{\rho}}\big)\!+\!\bar{\mu}_2\nabla{\rm div}\big(\frac{\omega}{n\!+\!\bar{\rho}}\!-\!\frac{m}{\bar{\rho}}\big)\!-\!n\nabla\phi.
\end{array}
\end{equation}
Note that when $\mu_1=\bar{\mu}_1$, $\mu_2=\bar{\mu}_2$ and $c_1^2=c_2^2$, by using the linear combination of the densities and momenta of the ions and the electrons, one ultimately deal with a linear compressible Navier-Stokes system and a linear unipolar compressible Navier-Stokes-Poisson system, which is the basis in the linear analysis in \cite{wu4}. However, when the ions and the electrons have unequal viscosity coefficients or unequal basic sound speeds, we have to consider the linear problem of (\ref{2.1}) directly. The last terms in $F_1$ and $F_2$ are not of divergence form, which also bring us much more difficulties in deducing the generalized Huygens' principle.

Next, define $U=(\rho, m, n, w)^{T}$. Based on the semigroup theory for evolutionary equation, we will study the following IVP for the linearized BNSP system:
\begin{equation}\label{2.3}
\left\{\begin{array}{l}
U_{t}=\mathcal{A} U, \\
\left.U\right|_{t=0}=U_{0},
\end{array}\right.
\end{equation}
where the operator $\mathcal{A}$ is given by
\begin{equation*}
\mathcal{A}(D)=\left(\begin{array}{cccc}
0 & -\operatorname{div} & 0 & 0 \\
-c_1^2\nabla+\bar{\rho}\frac{\nabla}{\Delta} & \frac{\mu_1}{\bar{\rho}}\Delta+\frac{\mu_2}{\bar{\rho}}\nabla{\rm div} & -\bar{\rho}\frac{\nabla}{\Delta} & 0 \\
0 & 0 & 0 & -\operatorname{div} \\
-\bar{\rho}\frac{\nabla}{\Delta} & 0 & -c_2^2\nabla+\bar{\rho}\frac{\nabla}{\Delta} & \frac{\bar{\mu}_1}{\bar{\rho}}\Delta+\frac{\bar{\mu}_2}{\bar{\rho}}\nabla{\rm div}
\end{array}\right).
\end{equation*}
Applying the Fourier transform to the system (\ref{2.3}), we have
\begin{equation}\label{2.4}
\left\{\begin{array}{l}
\hat{U}_{t}=\mathcal{A}(\xi) \hat{U}, \\
\left.\hat{U}\right|_{t=0}=\widehat{U}_{0},
\end{array}\right.
\end{equation}
where $\widehat{U}(\xi, t)=\mathcal{F}(U(x, t)), \xi=\left(\xi^{1}, \xi^{2}, \xi^{3}\right)^{T}$ and $\mathcal{A}(\xi)$ is defined by
\begin{equation}\label{2.5}
\mathcal{A}(\xi)=\left(\begin{array}{cccc}
0 & -i\xi^T & 0 & 0 \\
-ic_1^2\xi-i\bar{\rho}\frac{\xi}{|\xi|^2} & -\frac{\mu_1}{\bar{\rho}}|\xi|^2I-\frac{\mu_2}{\bar{\rho}}\xi\xi^T & i\bar{\rho}\frac{\xi}{|\xi|^2} & 0 \\
0 & 0 & 0 & -i\xi^T \\
i\bar{\rho}\frac{\xi}{|\xi|^2} & 0 & -ic_2^2\xi-i\bar{\rho}\frac{\xi}{|\xi|^2} & -\frac{\bar{\mu}_1}{\bar{\rho}}|\xi|^2I-\frac{\bar{\mu}_2}{\bar{\rho}}\xi\xi^T
\end{array}\right).
\end{equation}
To facilitate clear narrative in the proof of the pointwise space-time estimates for the nonlinear problem in the last section, we define the Green's function $G(x,t)$ as follows:
\begin{equation}\label{2.6}
\left\{\begin{array}{l}
G_{t}=\mathcal{A} G, \\
G|_{t=0}=\delta_0(x)I_8.
\end{array}\right.
\end{equation}
Besides, to give the representation of the Green's function in the Fourier space more easily, we use the Hodge decomposition. Let $\varphi_1=\Lambda^{-1} \operatorname{div} m$ and  $\varphi_2=\Lambda^{-1} \operatorname{div} \omega$  be the ``compressible part" of the momenta $m$ and $\omega$, respectively, and denote  $\Phi_1=\Lambda^{-1} \operatorname{curl} m$ and  $\Phi_2=\Lambda^{-1} \operatorname{curl} \omega$  $( {\rm with} \ (\operatorname{curl} z)_{i}^{j}=   \left.\partial_{x_{j}} z^{i}-\partial_{x_{i}} z^{j}\right)$ by the ``incompressible part" of the momenta $m$ and $\omega$, respectively. Then, the system (\ref{2.4}) becomes
\begin{equation}\label{2.7}
\left\{\begin{array}{l}
\rho_{t}+\Lambda \varphi_1=0, \\
\partial_t\varphi_1-c_1^2\Lambda \rho+\frac{\mu}{\bar\rho}\Lambda^2\varphi_1-\bar{\rho}\Lambda^{-1}(\rho-n)=0, \\
n_{t}+\Lambda \varphi_2=0,\\
\partial_t\varphi_2-c_2^2\Lambda \rho+\frac{\bar{\mu}}{\bar\rho}\Lambda^2\varphi_1+\bar{\rho}\Lambda^{-1}(\rho-n)=0, \\
\left.(\rho, \varphi_1, n, \varphi_2)\right|_{t=0}=\left(\rho_{0}(x), \Lambda^{-1} \operatorname{div} m_{0}(x), n_{0}(x), \Lambda^{-1} \operatorname{div} \omega_{0}(x)\right),
\end{array}\right.
\end{equation}
with $\mu=\mu_1+\mu_2$, $\bar{\mu}=\bar{\mu}_1+\bar{\mu}_2$ and
\begin{equation}\label{2.8}
\left\{\begin{array}{l}
\partial_t\Phi_1+\frac{\mu_1}{\bar\rho}\Lambda^2\Phi_1=0, \\
\partial_t\Phi_2+\frac{\bar{\mu}_1}{\bar\rho}\Lambda^2\Phi_2=0, \\
\left.(\Phi_1, \Phi_2)\right|_{t=0}=\left(\Lambda^{-1} \operatorname{curl} m_{0}(x), \Lambda^{-1} \operatorname{curl} \omega_{0}(x)\right).
\end{array}\right.
\end{equation}

\begin{remark}
Obviously, the incompressible parts $\Phi_1$ and $\Phi_2$ are decoupled in (\ref{2.8}), and they behave like the heat kernel. Thus, in the following we mainly focus on the compressible part.
\end{remark}

\subsection{Spectral analysis for the compressible part}

We shall write the IVP  (\ref{2.7})  for  $\mathcal{U}=(\rho, \varphi_1, n, \varphi_2)^{T} $ as
\begin{equation}\label{2.9}
\left\{\begin{array}{l}
\mathcal{U}_{t}=\mathcal{A}_{1} \mathcal{U}, \\
\left.\mathcal{U}\right|_{t=0}=\mathcal{U}_{0},
\end{array}\right.
\end{equation}
where the operator  $\mathcal{A}_{1}$  is given by
\begin{equation*}
\mathcal{A}_{1}(D)=\left(\begin{array}{cccc}
0 & -\Lambda & 0 & 0 \\
c_1^2\Lambda+\bar{\rho}\Lambda^{-1} & -\frac{\mu}{\bar\rho}\Lambda^2 & -\bar{\rho}\Lambda^{-1} & 0 \\
0 & 0 & 0 & - \Lambda \\
-\bar{\rho}\Lambda^{-1} & 0 & c_2^2\Lambda+\bar{\rho}\Lambda^{-1} & -\frac{\bar{\mu}}{\bar\rho}\Lambda^2
\end{array}\right).
\end{equation*}
Taking the Fourier transform to the system (\ref{2.9}), we have
\begin{equation*}
\left\{\begin{array}{l}
\hat{\mathcal{U}}_{t}=\mathcal{A}_{1}(\xi) \hat{\mathcal{U}}, \\
\left.\hat{\mathcal{U}}\right|_{t=0}=\hat{\mathcal{U}}_{0},
\end{array}\right.
\end{equation*}
where  $\hat{\mathcal{U}}(\xi, t)\triangleq\mathcal{F}(\mathcal{U}(x, t)) $ and  $\mathcal{A}_{1}(\xi)$  is defined by
\begin{equation}
\mathcal{A}_{1}(\xi)=\left(\begin{array}{cccc}
0 & -|\xi| & 0 & 0 \\
c_1^2|\xi|+\bar{\rho}|\xi|^{-1} & -\frac{\mu}{\bar\rho}|\xi|^2 & -\bar{\rho}|\xi|^{-1} & 0 \\
0 & 0 & 0 & - |\xi| \\
-\bar{\rho}|\xi|^{-1} & 0 & c_2^2|\xi|+\bar{\rho}|\xi|^{-1} & -\frac{\bar{\mu}}{\bar\rho}|\xi|^2
\end{array}\right).
\end{equation}
Its eigenvalues satisfy
\begin{equation}\label{2.10}
\begin{array}{rl}
{\rm det}\left(\theta \mathrm{I}-\mathcal{A}_{1}(\xi)\right)
=&\theta^{4}+\frac{\mu+\bar{\mu}}{\bar\rho}|\xi|^2\theta^3+\big[\frac{\mu\bar\mu}{\bar{\rho}^2}|\xi|^4
+(c_1^2+c_2^2)|\xi|^2+2\bar\rho\big]\theta^2\\[2mm]
&+\big[(c_1^2\frac{\bar\mu}{\bar\rho}+c_2^2\frac{\mu}{\bar\rho})|\xi|^2+\mu+\bar\mu\big]|\xi|^2\theta+[c_1^2c_2^2|\xi|^4+(c_1^2+c_2^2)\bar{\rho}|\xi|^2]
=0.
\end{array}
\end{equation}
One can see that the matrix  $\mathcal{A}_{1}(\xi)$  has four different eigenvalues: $\theta_{1}(|\xi|), \theta_{2}(|\xi|), \theta_{3}(|\xi|), \theta_{4}(|\xi|)$.
Thus, the semigroup $\mathrm{e}^{t \mathcal{A}_{1}}$ is decomposed into
\begin{equation}\label{2.11}
\mathrm{e}^{t \mathcal{A}_{1}(\xi)}=\sum_{j=1}^{4} \mathrm{e}^{\theta_{j} t} P^{j}(\xi),
\end{equation}
and the projector  $P^{j}(\xi)$  is
\begin{equation}\label{2.12}
P^{j}(\xi)=\prod_{k \neq j} \frac{\mathcal{A}_{1}(\xi)-\theta_{k} I}{\theta_{j}-\theta_{k}}, \quad j, k=1,2,3,4.
\end{equation}
Therefore, the solution of IVP (\ref{2.7}) can be given as
\begin{equation}\label{2.13}
\hat{\mathcal{U}}(\xi, t)=\mathrm{e}^{t \mathcal{A}_{1}(\xi)} \hat{\mathcal{U}}_{0}(\xi)=\bigg(\sum_{j=1}^{4} \mathrm{e}^{\theta_{j} t} P^{j}(\xi)\bigg) \widehat{\mathcal{U}}_{0}(\xi).
\end{equation}
In the following, we use the superscript $``l"$ to denote the low frequency part, and use the superscript $``h"$ means the high frequency part.

\vspace{4mm}
\textbf{\textit{Low frequency part.}}\vspace{3mm}

By a direct computation, we have the following for the spectral in the low frequency part:
\begin{lemma}\label{l 2.1} There exists a positive constant  $\eta_{1} \ll 1$  such that, for  $|\xi| \leq \eta_{1}$, the spectral has the following Taylor series expansion:
\begin{equation}\label{2.15}
\left\{\begin{aligned}
\theta_{1}=\bar\theta_2=&-\frac{\mu+\bar\mu}{4\bar\rho}|\xi|^2+\frac{(\mu-\bar\mu)(c_1^2-c_2^2)}{8\bar\rho^2}|\xi|^4+i\sqrt{\frac{c_1^2+c_2^2}{2}}|\xi|+ia|\xi|^3+\cdots, \\[2mm]
\theta_{3}=\bar\theta_4=&-\frac{\mu+\bar\mu}{4\bar\rho}|\xi|^2-\frac{(\mu-\bar\mu)(c_1^2-c_2^2)}{8\bar\rho^2}|\xi|^4+i\sqrt{2\bar\rho}+i\frac{c_1^2+c_2^2}{4\sqrt{2\bar\rho}}|\xi|^2+\cdots,
\end{aligned}\right.
\end{equation}
where $a=-\frac{(c_1^2-c_2^2)^2}{16\bar\rho\sqrt{\frac{c_1^2+c_2^2}{2}}}-\frac{(\mu+\bar\mu)^2}{32\bar\rho^2\sqrt{\frac{c_1^2+c_2^2}{2}}}$.
\end{lemma}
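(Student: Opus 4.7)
The plan is to treat the characteristic polynomial in (\ref{2.10}) as a polynomial $P(\theta,x)$ in $\theta$ and $x:=|\xi|^{2}$, since every occurrence of $|\xi|$ in (\ref{2.10}) carries an even exponent. At $x=0$ it collapses to $\theta^{2}(\theta^{2}+2\bar\rho)=0$, whose roots are $\theta=0$ (double) and $\theta=\pm i\sqrt{2\bar\rho}$ (simple). This separates the analysis into a ``sonic'' branch with $\theta_{1},\theta_{2}\to 0$ and a ``plasma'' branch with $\theta_{3},\theta_{4}\to\pm i\sqrt{2\bar\rho}$. Since $P$ has real coefficients, complex roots come in conjugate pairs, which will immediately give $\theta_{2}=\bar\theta_{1}$ and $\theta_{4}=\bar\theta_{3}$, so it suffices to construct $\theta_{1}$ and $\theta_{3}$.

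\textbf{Plasma branches $\theta_{3},\theta_{4}$.} Because $\pm i\sqrt{2\bar\rho}$ are simple roots of $P(\cdot,0)$, the implicit function theorem yields two branches $\theta_{3,4}(x)$ that are ordinary power series in $x=|\xi|^{2}$. I would write $\theta_{3}(|\xi|)=i\sqrt{2\bar\rho}+b_{1}|\xi|^{2}+b_{2}|\xi|^{4}+\cdots$, substitute into (\ref{2.10}), and collect the $|\xi|^{2}$ terms. Using $\theta_{0}^{2}=-2\bar\rho$ the resulting linear equation in $b_{1}$ simplifies to
\begin{equation*}
b_{1}=-\frac{\mu+\bar\mu}{4\bar\rho}+i\frac{c_{1}^{2}+c_{2}^{2}}{4\sqrt{2\bar\rho}},
\end{equation*}
which matches the stated expansion of $\theta_{3}$; the branch $\theta_{4}$ is its conjugate.

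\textbf{Sonic branches $\theta_{1},\theta_{2}$.} Here $\theta=0$ is a double root at $x=0$, so I would use a Newton polygon argument to find a Puiseux-type expansion in $|\xi|=\sqrt{x}$. The lowest-order balance between $2\bar\rho\,\theta^{2}$ and $(c_{1}^{2}+c_{2}^{2})\bar\rho\,|\xi|^{2}$ forces $\theta\sim\alpha|\xi|$ with $\alpha^{2}=-\tfrac{c_{1}^{2}+c_{2}^{2}}{2}$. I would then insert the full ansatz
\begin{equation*}
\theta=\alpha|\xi|+\beta|\xi|^{2}+\gamma|\xi|^{3}+\delta|\xi|^{4}+\cdots
\end{equation*}
into $P(\theta,|\xi|^{2})=0$ and equate coefficients at orders $|\xi|^{2},|\xi|^{3},|\xi|^{4},|\xi|^{5}$. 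Order $|\xi|^{3}$ yields $\beta=-\tfrac{\mu+\bar\mu}{4\bar\rho}$; at order $|\xi|^{4}$, after using $\alpha^{2}=-\tfrac{c_{1}^{2}+c_{2}^{2}}{2}$, one finds $4\bar\rho\alpha\gamma=\tfrac{(c_{1}^{2}-c_{2}^{2})^{2}}{4}+\tfrac{(\mu+\bar\mu)^{2}}{8\bar\rho}$, so that $\gamma=ia$ with exactly the $a$ given in the statement.

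\textbf{Main obstacle: the order $|\xi|^{5}$ cancellations.} The delicate step is matching at order $|\xi|^{5}$, where one must recover a \emph{real} $|\xi|^{4}$ coefficient $\delta=\tfrac{(\mu-\bar\mu)(c_{1}^{2}-c_{2}^{2})}{8\bar\rho^{2}}$. A direct expansion at this order produces six source terms, but two pairs vanish identically once the values of $\beta$ and $\alpha^{2}$ are used: the contribution $4\alpha^{3}\beta$ from $\theta^{4}$ cancels exactly the contribution $\tfrac{\mu+\bar\mu}{\bar\rho}\alpha^{3}$ from the $|\xi|^{2}\theta^{3}$ term, and $2\cdot 2\bar\rho\,\beta\gamma$ (from $2\bar\rho\,\theta^{2}$) cancels $(\mu+\bar\mu)\gamma$ (from $(\mu+\bar\mu)|\xi|^{2}\theta$). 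Only the mixed terms $2(c_{1}^{2}+c_{2}^{2})\alpha\beta+\tfrac{c_{1}^{2}\bar\mu+c_{2}^{2}\mu}{\bar\rho}\alpha$ survive; simplifying the bracket gives $-\tfrac{\alpha(c_{1}^{2}-c_{2}^{2})(\mu-\bar\mu)}{2\bar\rho}$, and dividing by $4\bar\rho\alpha$ produces the claimed real $\delta$. I expect this cancellation bookkeeping to be the only nontrivial point; all other steps are routine substitution, and by complex conjugation $\theta_{2}=\bar\theta_{1}$, $\theta_{4}=\bar\theta_{3}$.
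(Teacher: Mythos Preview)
Your proposal is correct and follows precisely the ``direct computation'' that the paper invokes without details: factor $P(\theta,0)=\theta^{2}(\theta^{2}+2\bar\rho)$, apply the implicit function theorem at the simple roots $\pm i\sqrt{2\bar\rho}$, and a Puiseux ansatz $\theta=\alpha|\xi|+\beta|\xi|^{2}+\cdots$ at the double root $0$, then match coefficients order by order. Your identification of the two cancellations at order $|\xi|^{5}$ (namely $4\alpha^{3}\beta+\tfrac{\mu+\bar\mu}{\bar\rho}\alpha^{3}=0$ and $4\bar\rho\beta\gamma+(\mu+\bar\mu)\gamma=0$, both consequences of $\beta=-\tfrac{\mu+\bar\mu}{4\bar\rho}$) is exactly what makes the real $|\xi|^{4}$ coefficient come out cleanly; the paper supplies none of this bookkeeping, so your write-up is strictly more informative. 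The only omission is that you stop the plasma branch at $b_{1}$ and do not verify the stated $\mp\tfrac{(\mu-\bar\mu)(c_{1}^{2}-c_{2}^{2})}{8\bar\rho^{2}}|\xi|^{4}$ term for $\theta_{3}$; one more round of the same substitution (collecting the $|\xi|^{4}$ terms in $P$) handles it identically.
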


In virtue of Lemma \ref{l 2.1}, one has the following estimates after a direct computation
\begin{equation}\label{2.16}
\begin{aligned}
&P^{1,l}(\xi)=\overline{P^{2,l}(\xi)}\\
=&\left(\begin{array}{cccc}
\frac{1}{4}-i\frac{3\mu-\bar\mu}{16\bar\rho\sqrt{\frac{c_1^2+c_2^2}{2}}}|\xi| & i\frac{1}{4\sqrt{\frac{c_1^2+c_2^2}{2}}} & \frac{1}{4}-i\frac{3\bar\mu-\mu}{16\bar\rho\sqrt{\frac{c_1^2+c_2^2}{2}}}|\xi| & i\frac{1}{4\sqrt{\frac{c_1^2+c_2^2}{2}}} \\[2mm]
-i\frac{c_1^2+c_2^2}{8\sqrt{2\bar\rho}}+\frac{\bar\mu-\mu}{8}|\xi| & \frac{1}{4}+i\frac{\mu+\bar\mu}{16}|\xi| & -i\frac{c_1^2+c_2^2}{8\sqrt{\frac{c_1^2+c_2^2}{2}}}+\frac{\mu-
\bar\mu}{8}|\xi| & \frac{1}{4}+i\frac{\mu+\bar\mu}{16}|\xi| \\[2mm]
\frac{1}{4}-i\frac{3\mu-\bar\mu}{16\bar\rho\sqrt{\frac{c_1^2+c_2^2}{2}}}|\xi| & i\frac{1}{4\sqrt{\frac{c_1^2+c_2^2}{2}}} & \frac{1}{4}-i\frac{3\bar\mu-\mu}{16\bar\rho\sqrt{\frac{c_1^2+c_2^2}{2}}}|\xi| & i\frac{1}{4\sqrt{\frac{c_1^2+c_2^2}{2}}} \\[2mm]
-i\frac{c_1^2+c_2^2}{8\sqrt{\frac{c_1^2+c_2^2}{2}}}+\frac{\bar\mu-\mu}{8}|\xi| & \frac{1}{4}+i\frac{\mu+\bar\mu}{16}|\xi| & -i\frac{c_1^2+c_2^2}{8\sqrt{2\bar\rho}}+\frac{\mu-\bar\mu}{8}|\xi| & \frac{1}{4}+i\frac{\mu+\bar\mu}{16}|\xi|\end{array}\right)+R_1^l,
\end{aligned}
\end{equation}
\begin{equation}
\begin{aligned}
P^{3,l}(\xi)=\overline{P^{4,l}(\xi)}
=\left(\begin{array}{cccc}
\frac{1}{4} & 0 & -\frac{1}{4} & 0 \\[2mm]
-i\frac{\sqrt{2\bar\rho}}{4}|\xi|^{-1} & \frac{1}{4} & i\frac{\sqrt{2\bar\rho}}{4}|\xi|^{-1} & -\frac{1}{4} \\[2mm]
-\frac{1}{4} & 0 & \frac{1}{4} &0 \\[2mm]
i\frac{\sqrt{2\bar\rho}}{4}|\xi|^{-1} & -\frac{1}{4} & -i\frac{\sqrt{2\bar\rho}}{4}|\xi|^{-1} & \frac{1}{4}
\end{array}\right)+R_2^l.
\end{aligned}
\end{equation}
Here the rest terms $R_1^l,R_2^l$ are imaginary with higher powers of $|\xi|$. Additionally, their real parts only contain even powers of $|\xi|$ and their imaginary parts only contain odd powers of $|\xi|$.

\bigbreak
\textbf{\textit{High frequency part.}}\vspace{3mm}

Then we consider the high frequency part. After a direct computation, one has
\begin{lemma}\label{l 2.2}
 There exists a positive constant  $\eta_{2} \gg 1$  such that, for  $|\xi| \gg \eta_{2}$, the spectral has the following Taylor series expansion:
\begin{equation}\label{2.24(1)}
\left\{\begin{aligned}
\theta_{1}=&-\frac{c_1^2\bar\rho}{\mu}+O\left(|\xi|^{-2}\right),\\
\theta_{2}=&-\frac{\mu}{\bar\rho}|\xi|^{2}+\frac{c_1^2\bar\rho}{\mu}+O\left(|\xi|^{-2}\right), \\
\theta_{3}=&-\frac{c_2^2\bar\rho}{\bar\mu}+O\left(|\xi|^{-2}\right), \\
\theta_{4}=& -\frac{\bar\mu}{\bar\rho}|\xi|^{2}+\frac{c_2^2\bar\rho}{\bar\mu}+O\left(|\xi|^{-2}\right). \\
\end{aligned}\right.
\end{equation}
\end{lemma}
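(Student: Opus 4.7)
The plan is to analyze the characteristic quartic (2.10) perturbatively in the small parameter $s := |\xi|^{-2}$, separating the two ``parabolic'' roots that scale like $|\xi|^2$ from the two ``relaxational'' roots that remain bounded as $|\xi|\to\infty$, and then upgrading each leading order to the claimed expansion via the implicit function theorem.

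\emph{Leading orders.} Inserting the ansatz $\theta = \eta|\xi|^2$ with $\eta = O(1)$ into (2.10) and dividing by $|\xi|^8$, the $s \to 0$ limit reduces to $\eta^2(\eta + \mu/\bar\rho)(\eta + \bar\mu/\bar\rho) = 0$, producing the two parabolic leading orders $\eta = -\mu/\bar\rho$ and $\eta = -\bar\mu/\bar\rho$ corresponding to $\theta_2$ and $\theta_4$. Separately, inserting $\theta = \zeta$ bounded and retaining only the dominant $|\xi|^4$ contributions of (2.10) yields $(\mu\zeta + c_1^2\bar\rho)(\bar\mu\zeta + c_2^2\bar\rho) = 0$, so the two bounded leading orders are $\zeta = -c_1^2\bar\rho/\mu$ and $\zeta = -c_2^2\bar\rho/\bar\mu$, which match the stated leading terms of $\theta_1$ and $\theta_3$. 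These four simple values exhaust the quartic.

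\emph{Expansion to $O(|\xi|^{-2})$.} For the bounded branches, form $\tilde p(\zeta, s) := |\xi|^{-4} p(\zeta, \xi)$, which is polynomial in $(\zeta, s)$; the leading values $-c_1^2\bar\rho/\mu$ and $-c_2^2\bar\rho/\bar\mu$ are simple roots of $\tilde p(\cdot, 0)$ provided $c_1^2\bar\mu \neq c_2^2\mu$, so the implicit function theorem delivers analytic branches $\zeta_j(s) = \zeta_j(0) + O(s)$, which upon reverting give $\theta_1 = -c_1^2\bar\rho/\mu + O(|\xi|^{-2})$ and $\theta_3 = -c_2^2\bar\rho/\bar\mu + O(|\xi|^{-2})$. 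For the divergent branches, apply the same IFT argument to $q(\eta, s) := s^4 p(\eta/s, \xi)$ at the simple roots $\eta = -\mu/\bar\rho$ and $\eta = -\bar\mu/\bar\rho$ of $q(\cdot, 0)$, which are simple provided $\mu \neq \bar\mu$; writing $\eta(s) = \eta(0) + \eta'(0)\, s + O(s^2)$ and computing $\eta'(0) = -\partial_s q/\partial_\eta q$ at the appropriate base point yields the subleading constants $c_1^2\bar\rho/\mu$ and $c_2^2\bar\rho/\bar\mu$ for $\theta_2$ and $\theta_4$. Reinstating $\theta = \eta|\xi|^2$ gives the claim.

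\emph{Main obstacle.} The only subtlety is the simplicity of the four leading-order roots, which requires $\mu \neq \bar\mu$ (guaranteed by the hypothesis $\mu_i \neq \bar\mu_i$ of Theorem \ref{l 1}) and $c_1^2\bar\mu \neq c_2^2\mu$ (a generic nondegeneracy in the remaining parameters). If either condition degenerates, the corresponding pair of eigenvalues becomes resonant and one must replace the Taylor expansion by a Puiseux expansion; this situation does not arise under the assumptions of the theorem. Everything else reduces to routine bookkeeping of implicit differentiation applied to the explicit quartic (2.10).
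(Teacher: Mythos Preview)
Your argument is correct and is precisely what the paper means by ``after a direct computation, one has''---the paper gives no further detail for Lemma \ref{l 2.2}. Your implicit-function-theorem framing is just a clean way to organize the brute-force Taylor expansion of the roots of the quartic (\ref{2.10}) in $s=|\xi|^{-2}$, and your computation of the subleading constants for $\theta_2,\theta_4$ via $\eta'(0)=-\partial_s q/\partial_\eta q$ recovers exactly the stated $c_1^2\bar\rho/\mu$ and $c_2^2\bar\rho/\bar\mu$.

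One small correction on the nondegeneracy discussion: the hypothesis $\mu_i\neq\bar\mu_i$ for $i=1,2$ in Theorem \ref{l 1} does \emph{not} by itself guarantee $\mu=\mu_1+\mu_2\neq\bar\mu_1+\bar\mu_2=\bar\mu$. However, the paper tacitly assumes $\mu\neq\bar\mu$ throughout---note the explicit factors $(\mu-\bar\mu)^{-1}$ in the projector formulas of Lemma \ref{l 2.3}---so your simplicity requirement is consistent with the paper's standing assumptions. Also observe that under the paper's main hypothesis $c_1^2=c_2^2$ your second nondegeneracy condition $c_1^2\bar\mu\neq c_2^2\mu$ collapses to the first one, $\mu\neq\bar\mu$, so there is really only one condition to check.
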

Substituting the estimates in Lemma \ref{l 2.2} into $P^j$ with $j=1,2,3,4$, one has
\begin{lemma}\label{l 2.3}  When $|\xi| \geq \eta_{1}$ with suitable large $\eta_{1}$, we can express  $P^{j}\ (1 \leq j \leq 4)$ as follows:
\begin{equation*}\label{2.25}
\begin{aligned}
&P^{1,h}(\xi)=\frac{\mathcal{A}_{1}-\theta_{2}I}{\theta_{1}-\theta_{2}} \frac{\mathcal{A}_{1}-\theta_{3}I}{\theta_{1}-\theta_{3}} \frac{\mathcal{A}_{1}-\theta_{4}I}{\theta_{1}-\theta_{4}}\\
=&\left(\!\!\begin{array}{cccc}
\bar\rho &-\frac{{\bar\rho}^2}{\mu}|\xi|^{-1}  & \frac{\bar{\rho}^2\bar\mu}{\mu c_2^2-\bar{\mu} c_1^2}|\xi|^{-2} & \frac{-{\bar\rho}^3}{\mu c_2^2-\bar\mu c_1^2}|\xi|^{-3} \\[2mm]
\frac{\bar{\rho}}{\mu}c_1^2|\xi|^{-1}  & \frac{\mu^2-\mu\bar\mu+c_1^2(c_1^2-c_2^2)\bar{\rho}}{\mu(\mu c_2^2-\bar{\mu} c_1^2)}{\bar{\rho}}^2|\xi|^{-2} & \frac{\bar\mu c_1^2\bar\rho^3}{\mu(\mu c_2^2-\bar\mu c_1^2)}||\xi|^{-3} & \frac{-c_1^2\bar{\rho}^4}{\mu(\mu c_2^2-\bar{\mu} c_1^2)}|\xi|^{-4} \\[2mm]
\frac{{\bar\rho}^2\mu}{\mu c_2^2-\bar\mu c_1^2}|\xi|^{-2} & \frac{-{\bar\rho}^3}{\mu c_2^2-\bar\mu c_1^2}|\xi|^{-3} & -\frac{({\bar\mu}^2+c_2^4\bar\rho)\mu{\bar\rho}^2}{{\bar\mu}^2(\mu c_2^2-\bar\mu c_1^2)}|\xi|^{-2}  & -\frac{2\bar\rho\bar\mu}{\mu c_2^2-\bar\mu c_1^2}|\xi|^{-2} \\[2mm]
\frac{c_1^2{\bar\rho}^3}{\mu c_2^2-\bar\mu c_1^2}|\xi|^{-3} & \frac{\bar\mu c_1^2{\bar\rho}^3}{\mu(\mu c_2^2-\bar\mu c_1^2)}||\xi|^{-3} & \frac{(\mu-\bar\mu)c_1^2c_2^2{\bar\rho}^2}{\mu(\mu c_2^2-\bar\mu c_1^2)}|\xi|^{-1} & \frac{(\bar\mu-\mu)({\bar\mu}^2-c_2^4\bar\rho){\bar\rho}^2}{{\bar\mu}^2}|\xi|^{-2}
\end{array}\right)+\cdots,
\end{aligned}
\end{equation*}
\begin{equation*}
\begin{aligned}
&P^{2,h}(\xi)=\frac{\mathcal{A}_{1}-\theta_{1}I}{\theta_{2}-\theta_{1}} \frac{\mathcal{A}_{1}-\theta_{3}I}{\theta_{2}-\theta_{3}} \frac{\mathcal{A}_{1}-\theta_{4}I}{\theta_{2}-\theta_{4}}\\[1mm]
=&\left(\begin{array}{cccc}
\frac{-\bar\rho^2c_1^2}{\mu^2}|\xi|^{-2} &-\frac{\bar\rho}{\mu}|\xi|^{-1} & \frac{\bar\rho^3}{\mu^2}|\xi|^{-4} & \frac{-\bar\rho^4}{\mu^2(\bar\mu-\mu)}|\xi|^{-5} \\[2mm]
\frac{-c_1^2\bar\rho}{\mu}|\xi|^{-1} & 1 & \frac{\bar\rho^2}{\mu}|\xi|^{-3} & \frac{-\mu\bar\rho^3}{\mu^2(\bar\mu-\mu)}|\xi|^{-4} \\[2mm]
\frac{c_1^2\bar\rho^5}{\mu^3(\bar\mu-\mu)}|\xi|^{-6} & \frac{-\bar\rho^4}{\mu^2(\bar\mu-\mu)}|\xi|^{-5} & \frac{(\bar\mu^2-c_2^4\bar\rho)c_1^2\bar\rho^5}{\mu^3\bar\mu^2(\bar\mu-\mu)}|\xi|^{-6}  & \frac{(\mu c_2^2+\bar\mu c_1^2-c_1^2c_2^2\bar\rho)\bar\rho^3}{\mu^2(\mu-\bar\mu)}|\xi|^{-3} \\[2mm]
\frac{c_1^2\bar\rho^4}{\mu^2(\bar\mu-\mu)}|\xi|^{-5} & \frac{-\mu\bar\rho^3}{\mu^2(\bar\mu-\mu)}|\xi|^{-4} & \frac{\bar\mu^2c_2^2-\bar\rho c_2^6}{\mu^2\bar\mu^2(\bar\mu-\mu)}\bar\rho^4||\xi|^{-5}  & \frac{(\bar\mu^2+c_2^4\bar\rho)\bar\rho^3}{\bar\mu\mu^2(\bar\mu-\mu)}|\xi|^{-4}
\end{array}\right)+\cdots,
\end{aligned}
\end{equation*}
\begin{equation*}
\begin{aligned}
&P^{3,h}(\xi)=\frac{\mathcal{A}_{1}-\theta_{1}I}{\theta_{3}-\theta_{1}} \frac{\mathcal{A}_{1}-\theta_{2}I}{\theta_{3}-\theta_{2}} \frac{\mathcal{A}_{1}-\theta_{4}I}{\theta_{3}-\theta_{4}}\\[1mm]
=&\left(\!\!\begin{array}{cccc}
\frac{-(\mu\bar\mu+c_1^4\bar\rho)\bar\rho}{\mu(\bar\mu c_1^2-\mu c_2^2)}|\xi|^{-2} & \frac{(\mu^2+c_1^4\bar\rho)\bar\rho}{\mu^2(\bar\mu c_1^2-\mu c_2^2)}|\xi|^{-3} &  \frac{\bar\mu\bar\rho}{\bar\mu c_1^2-\mu c_2^2}|\xi|^{-2} & \frac{-\bar\rho^2}{\bar\mu c_1^2-\mu c_2^2}|\xi|^{-3} \\[1mm]
\frac{c_1^2(\mu^2+c_1^4\bar\rho)\bar\rho^3}{\mu^2(\bar\mu c_1^2-\mu c_2^2)}|\xi|^{-3} & \frac{(\mu-\bar\mu)\bar\rho}{\bar\mu c_1^2-\mu c_2^2}|\xi|^{-2} & \frac{c_2^2\bar\rho^2}{\bar\mu c_1^2-\mu c_2^2}|\xi|^{-3} & \frac{-c_2^2\bar\rho^4}{\bar\mu(\bar\mu c_1^2-\mu c_2^2)}|\xi|^{-4} \\[1mm]
\frac{\mu\bar\rho}{\bar\mu c_1^2-\mu c_2^2}|\xi|^{-2} & \frac{-\bar\rho^2}{\bar\mu c_1^2-\mu c_2^2}|\xi|^{-3} & \frac{(-\mu^2\bar\mu-c_1^2c_2^2\mu-c_1^4\bar\mu+c_2^4\mu)\bar\rho^2}{\mu\bar\mu(\bar\mu c_1^2-\mu c_2^2)}|\xi|^{-2}  & \frac{\bar\rho^2}{\mu^2}|\xi|^{-1} \\[2mm]
 \frac{\mu c_2^2\bar\rho^2}{\bar\mu(\bar\mu c_1^2-\mu c_2^2)}|\xi|^{-3}\!\! & \!\!\frac{-c_2^2\bar\rho^3}{\bar\mu(\bar\mu c_1^2-\mu c_2^2)}|\xi|^{-4}
 \!\!& \!\!\frac{c_2^2\bar\rho}{\bar\mu}|\xi|^{-1}\!\! & \!\!\!\frac{(\mu^2c_2^4-\mu\bar\mu c_1^4+\bar\mu^2c_1^4-\mu^2c_1^2c_2^2)\bar\rho^3}{\mu^2\bar\mu(\bar\mu c_1^2-\mu c_2^2)}|\xi|^{-2}
\end{array}\!\!\right)\\
&+\cdots,
\end{aligned}
\end{equation*}
\begin{equation*}
\begin{aligned}
&P^{4,h}(\xi)=\frac{\mathcal{A}_{1}-\theta_{1}I}{\theta_{4}-\theta_{1}} \frac{\mathcal{A}_{1}-\theta_{2}I}{\theta_{4}-\theta_{2}} \frac{\mathcal{A}_{1}-\theta_{3}I}{\theta_{1}-\theta_{3}}\\[1mm]
=&\left(\begin{array}{cccc}
\frac{c_1^2c_2^2\bar\rho^4}{\mu^2\bar\mu^2}|\xi|^{-4} & \frac{(\mu^2+c_2^4\bar\rho)\bar\rho^4}{\mu^3\bar\mu(\mu-\bar\mu)}|\xi|^{-5} &  \frac{c_2^2\bar\rho^3}{\mu\bar\mu^2(\mu-\bar\mu)}|\xi|^{-6} & \frac{-\bar\rho^4}{\mu\bar\mu(\mu-\bar\mu)}|\xi|^{-5} \\[1mm]
\frac{(\bar\mu c_1^2+\mu c_2^2)\bar\rho^4}{\mu\bar\mu^2(\mu-\bar\mu)}|\xi|^{-5} & \frac{\mu^2\bar\rho^3+c_1^2(c_1^2+c_2^2)\bar\rho^4}{\mu^2\bar\mu(\mu-\bar\mu)}|\xi|^{-4} & \frac{c_2^2\bar\rho^4}{\mu\bar\mu(\mu-\bar\mu)}|\xi|^{-5} & \frac{-\bar\rho^3}{\mu(\mu-\bar\mu)}|\xi|^{-4} \\[1mm]
\frac{\bar\rho^3}{\mu\bar\mu}|\xi|^{-4} & \frac{-\bar\rho^4}{\mu\bar\mu(\mu-\bar\mu)}|\xi|^{-5} & \frac{\bar\rho^3(\mu\bar\mu-\mu^2)+(c_1^2-c_2^2)c_2^2\bar\rho^4}{\mu\bar\mu(\mu-\bar\mu)}|\xi|^{-4}  & \frac{\bar\rho}{\mu}|\xi|^{-1} \\[1mm]
\frac{\bar\rho^4}{\mu}|\xi|^{-3} & \frac{-\bar\rho^3}{\mu(\mu-\bar\mu)}|\xi|^{-4} & \frac{c_2^2\bar\rho}{\mu}|\xi|^{-1} & \frac{\bar\mu}{\mu}
\end{array}\right)+\cdots,
\end{aligned}
\end{equation*}
where all of ``$\cdots$" are the rest terms which will not affect the results.
\end{lemma}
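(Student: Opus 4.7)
The plan is to substitute the high-frequency eigenvalue asymptotics from Lemma \ref{l 2.2} directly into the projection formula (\ref{2.12}),
\begin{equation*}
P^{j,h}(\xi)=\prod_{k\neq j}\frac{\mathcal{A}_{1}(\xi)-\theta_{k}I}{\theta_{j}-\theta_{k}},\qquad j=1,2,3,4,
\end{equation*}
and expand each entry to leading order in $|\xi|^{-1}$. Everything is explicit, so the work is bookkeeping rather than conceptual; the claimed formulas will drop out by matching orders.

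First I would compute the six scalar denominators $\theta_j-\theta_k$. From Lemma \ref{l 2.2} one gets, for $|\xi|\gg 1$,
\begin{equation*}
\theta_1-\theta_2=\tfrac{\mu}{\bar\rho}|\xi|^{2}+O(1),\quad \theta_3-\theta_4=\tfrac{\bar\mu}{\bar\rho}|\xi|^{2}+O(1),\quad \theta_2-\theta_4=\tfrac{\bar\mu-\mu}{\bar\rho}|\xi|^{2}+O(1),
\end{equation*}
together with $\theta_1-\theta_4\sim\tfrac{\bar\mu}{\bar\rho}|\xi|^{2}$, $\theta_2-\theta_3\sim -\tfrac{\mu}{\bar\rho}|\xi|^{2}$, and the critical $O(1)$ difference
\begin{equation*}
\theta_1-\theta_3=\tfrac{\bar\rho(\mu c_{2}^{2}-\bar\mu c_{1}^{2})}{\mu\bar\mu}+O(|\xi|^{-2}).
\end{equation*}
Thus $\prod_{k\neq j}(\theta_j-\theta_k)$ is of order $|\xi|^{4}$ for $j=1,3$ (because the factor $\theta_1-\theta_3$ is only $O(1)$) and of order $|\xi|^{6}$ for $j=2,4$. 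This already explains why $P^{2,h}$ and $P^{4,h}$ carry an extra factor $|\xi|^{-2}$ relative to $P^{1,h}$ and $P^{3,h}$, and why the coefficient $\mu c_{2}^{2}-\bar\mu c_{1}^{2}$ appears in the denominators of $P^{1,h},P^{3,h}$ while the factor $\bar\mu-\mu$ appears in those of $P^{2,h},P^{4,h}$.

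Next I would multiply out the $4\times4$ matrix numerators. From the form of $\mathcal{A}_{1}(\xi)$ one sees that $\mathcal{A}_{1}-\theta_{k}I$ has entries of size $|\xi|^{2}$ on two diagonal positions (for $k=2,4$, since $\theta_{2},\theta_{4}$ themselves are $O(|\xi|^{2})$) and $O(|\xi|)$ off-diagonal entries together with the $\bar\rho|\xi|^{-1}$ Poisson entries. The strategy is to compute $(\mathcal{A}_{1}-\theta_{k_{1}}I)(\mathcal{A}_{1}-\theta_{k_{2}}I)$ first, keeping only terms that can survive after the third multiplication and the division by the denominator. For each entry one simply tracks the highest power of $|\xi|$ produced and then reads off the coefficient. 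The resulting $|\xi|$-exponents in the final matrices are dictated by the $O(|\xi|^{2})$ entries in the Lorentz structure of $\mathcal{A}_{1}$ minus the four (respectively six) powers absorbed by the scalar denominator; this matches the exponent pattern in the stated $P^{j,h}$.

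The main obstacle will be the sheer amount of algebra: sixteen entries for each of four projectors, each entry a sum of several products of three matrix entries, and the need to retain both the leading term and (where cancellations occur on the principal diagonal or between the two $O(|\xi|^{2})$ contributions) the next-order correction in order to confirm the stated coefficients. To keep the computation manageable I would organize $\mathcal{A}_{1}(\xi)$ into its hyperbolic block $\mathcal{A}_{\mathrm{hyp}}$ (entries $\pm c_{i}^{2}|\xi|$ and $-|\xi|$), the parabolic block $\mathcal{A}_{\mathrm{par}}=\mathrm{diag}(0,-\tfrac{\mu}{\bar\rho}|\xi|^{2},0,-\tfrac{\bar\mu}{\bar\rho}|\xi|^{2})$, and the Poisson block $\mathcal{A}_{\mathrm{P}}$ of entries $\pm\bar\rho|\xi|^{-1}$, so that each factor becomes $\mathcal{A}_{\mathrm{hyp}}+\mathcal{A}_{\mathrm{par}}+\mathcal{A}_{\mathrm{P}}-\theta_{k}I$ and one expands in these three pieces; only the combinations with definite $|\xi|$-order can contribute, which cuts the number of terms dramatically. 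After collecting and dividing by the corresponding denominator, one obtains exactly the matrices displayed in the statement, with the understood ``$\cdots$'' absorbing everything of strictly lower order in $|\xi|$.
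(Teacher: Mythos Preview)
Your proposal is correct and follows exactly the approach the paper takes: the paper merely states ``Substituting the estimates in Lemma \ref{l 2.2} into $P^{j}$ with $j=1,2,3,4$, one has'' and presents the matrices without further detail, so your plan of plugging the asymptotics (\ref{2.24(1)}) into the projection formula (\ref{2.12}) and expanding entrywise is precisely what is intended. Your block decomposition of $\mathcal{A}_{1}$ into hyperbolic, parabolic, and Poisson parts is a sensible way to organize the bookkeeping, but it is an elaboration rather than a different method.
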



\section{Pointwise space-time behavior of Green's function}

Now, we shall use high-low frequency decomposition to study Green's function, which reads that
\begin{equation*}
\begin{array}{rl}
G(x,t)=&\displaystyle\frac{1}{(2\pi)^3}\int\big(\chi_1(\xi)+\chi_2(\xi)+\chi_3(\xi)\big)\hat{G}(\xi,t)e^{ix\cdot\xi}d\xi\\[2mm]
\triangleq&\displaystyle \chi_1(D)G(x,t)+\chi_2(D)G(x,t)+\chi_3(D)G(x,t)
\triangleq G^l(x,t)+G^m(x,t)+G^h(x,t).
\end{array}
\end{equation*}
Here $
\chi_1(\xi)=\left\{\begin{array}{ll}
1, &|\xi|<\varepsilon_1, \\
0, &|\xi|>2\varepsilon_1,
\end{array}\right.
\  {\rm and}\ \
\chi_3(\xi)=\left\{\begin{array}{ll}
1, &|\xi|>K+1, \\
0, &|\xi|<K,
\end{array}\right.
$
are the smooth cut-off functions with $2\varepsilon_1<K$, and $\chi_2=1-\chi_1-\chi_3$.

\subsection{Representation of $\hat{G}^l(\xi,t)$}
From $\hat{G}(\xi,t)=\sum\limits_{j=1}^4e^{\theta_jt}P^j$ and the representation of $P^{j,l}$ in Section 2, one has
\begin{equation}\label{3.2}
\begin{aligned}
\hat\rho^l=&\bigg[\Big(\frac{1}{4}-i\frac{3\mu-\bar{\mu}}{16\bar\rho\sqrt{\frac{c_1^2+c_2^2}{2}}}|\xi|\Big)e^{\theta_1t}
+\Big(\frac{1}{4}+i\frac{3\mu-\bar{\mu}}{16\bar\rho\sqrt{\frac{c_1^2+c_2^2}{2}}}|\xi|\Big)e^{\theta_2t}
+\frac{1}{4}e^{\theta_3t}+\frac{1}{4}e^{\theta_4t}+\cdots\bigg]\hat\rho_0\\
&+\bigg[-\frac{1}{4\sqrt{\frac{c_1^2+c_2^2}{2}}}\frac{\xi}{|\xi|}(e^{\theta_1t}-e^{\theta_2t})
-\frac{1}{4\sqrt{\frac{c_1^2+c_2^2}{2}}}\xi (e^{\theta_3t}-e^{\theta_4t})+\cdots\bigg]\hat{m}_0\\
&+\bigg[\Big(\frac{1}{4}-i\frac{3\bar\mu-\mu}{16\bar\rho\sqrt{\frac{c_1^2+c_2^2}{2}}}|\xi|\Big)e^{\theta_1t}
+\Big(\frac{1}{4}+i\frac{3\bar\mu-\mu}{16\bar\rho\sqrt{\frac{c_1^2+c_2^2}{2}}}|\xi|\Big)e^{\theta_2t}-\frac{1}{4}e^{\theta_3t}-\frac{1}{4}e^{\theta_4t}
+\cdots\bigg]\hat n_0\\
&+\bigg[-\frac{1}{4\sqrt{\frac{c_1^2+c_2^2}{2}}}\frac{\xi}{|\xi|}(e^{\theta_1t}-e^{\theta_2t})
+\frac{1}{4\sqrt{\frac{c_1^2+c_2^2}{2}}}\xi (e^{\theta_3t}-e^{\theta_4t})+\cdots\bigg]\hat{\omega}_0\\
=&\ \hat{G}_{11}^l\hat\rho_0+\hat{G}_{12}^l\hat{m}_0+\hat{G}_{13}^l\hat n_0+\hat{G}_{14}^l\hat{\omega}_0,
\end{aligned}
\end{equation}
\begin{equation}\label{3.3}
\begin{aligned}
\hat n^l=&\bigg[\Big(\frac{1}{4}-i\frac{3\mu-\bar{\mu}}{16\bar\rho\sqrt{\frac{c_1^2+c_2^2}{2}}}|\xi|\Big)e^{\theta_1t}
+\Big(\frac{1}{4}+i\frac{3\mu-\bar{\mu}}{16\bar\rho\sqrt{\frac{c_1^2+c_2^2}{2}}}|\xi|\Big)e^{\theta_2t}
-\frac{1}{4}e^{\theta_3t}-\frac{1}{4}e^{\theta_4t}+\cdots\bigg]\hat\rho_0\\
&+\bigg[-\frac{1}{4\sqrt{\frac{c_1^2+c_2^2}{2}}}\frac{\xi}{|\xi|}(e^{\theta_1t}-e^{\theta_2t})
+\frac{1}{4\sqrt{\frac{c_1^2+c_2^2}{2}}}\xi (e^{\theta_3t}-e^{\theta_4t})+\cdots\bigg]\hat{m}_0\\
&+\bigg[\Big(\frac{1}{4}-i\frac{3\bar\mu-\mu}{16\bar\rho\sqrt{\frac{c_1^2+c_2^2}{2}}}|\xi|\Big)e^{\theta_1t}
+\Big(\frac{1}{4}+i\frac{3\bar\mu-\mu}{16\bar\rho\sqrt{\frac{c_1^2+c_2^2}{2}}}|\xi|\Big)e^{\theta_2t}+\frac{1}{4}e^{\theta_3t}+\frac{1}{4}e^{\theta_4t}+\cdots\bigg]\hat n_0\\
&+\bigg[-\frac{1}{4\sqrt{\frac{c_1^2+c_2^2}{2}}}\frac{\xi}{|\xi|}(e^{\theta_1t}-e^{\theta_2t})
-\frac{1}{4\sqrt{\frac{c_1^2+c_2^2}{2}}}\xi (e^{\theta_3t}-e^{\theta_4t})+\cdots\bigg]\hat{\omega}_0\\
=&\ \hat{G}_{31}^l\hat\rho_0+\hat{G}_{32}^l\hat{m}_0+\hat{G}_{33}^l\hat n_0+\hat{G}_{34}^l\hat{\omega}_0.
\end{aligned}
\end{equation}

For two momenta $\hat{m}$ and $\hat{\omega}$, due to the Hodge decomposition, we can get the following expansion.
\begin{equation}\label{3.4}
\begin{array}{lll}
&\hat{m}^l =-\widehat{\wedge^{-1}\nabla \varphi_1^l}-\widehat{\wedge ^{-1}{\rm div} \Phi_1^l}\\
=&\bigg[\big(\frac{c_1^2+c_2^2}{8\sqrt{2\bar\rho}}\frac{\xi}{|\xi|}+i\frac{\mu-\bar\mu}{8}\xi\big)\mathrm{e}^{\theta_1t}
+\big(-\frac{c_1^2+c_2^2}{8\sqrt{2\bar\rho}}\frac{\xi}{|\xi|}+i\frac{\mu-\bar\mu}{8}\xi\big)\mathrm{e}^{\theta_2t}
+\frac{\sqrt{2\bar\rho}}{4}\frac{\xi}{|\xi|^2}\big(\mathrm{e}^{\theta_3t}
-\mathrm{e}^{\theta_4t}\big)+\cdots\bigg]\hat{\rho}_0\\
&+\bigg[\big(\frac{1}{4}+i\frac{\mu+\bar\mu}{16}|\xi|\big)\mathrm{e}^{\theta_1t}
+\big(\frac{1}{4}-i\frac{\mu+\bar\mu}{16}|\xi|\big)\mathrm{e}^{\theta_2t}+\frac{1}{4}\big(\mathrm{e}^{\theta_3t}
+\mathrm{e}^{\theta_4t}\big)+\cdots\bigg]\frac{\xi\xi^T}{|\xi|^{2}}\hat{m}_{0}\\
&\ \ \ \ +\big(I-\frac{\xi\xi^T}{|\xi|^{2}}\big)e^{-\frac{\mu_1}{\bar\rho}|\xi|^2t}\hat{m}_{0}\\
&+\bigg[\Big(\frac{\sqrt{\frac{c_1^2+c_2^2}{2}}}{8}\frac{\xi}{|\xi|}+i\frac{\mu-\bar\mu}{8}\xi\Big)\mathrm{e}^{\theta_1t}
+\Big(-\frac{\sqrt{\frac{c_1^2+c_2^2}{2}}}{8}\frac{\xi}{|\xi|}+i\frac{\mu-\bar\mu}{8}\xi\Big)\mathrm{e}^{\theta_2t}\\
&\ \ \ \ \ -\frac{\sqrt{2\bar\rho}}{4}\frac{\xi}{|\xi|^2}\big(\mathrm{e}^{\theta_3t}-\mathrm{e}^{\theta_4t}\big)+\cdots\bigg]\hat{n}_0\\
&+\bigg[\big(\frac{1}{4}+i\frac{\mu+\bar\mu}{16}|\xi|\big)\mathrm{e}^{\theta_1t}
+\big(\frac{1}{4}-i\frac{\mu+\bar\mu}{16}|\xi|\big)\mathrm{e}^{\theta_2t}-\frac{1}{4}\big(\mathrm{e}^{\theta_3t}
+\mathrm{e}^{\theta_4t}\big)+\cdots\bigg]\frac{\xi\xi^T}{|\xi|^{2}}\hat{\omega}_{0}\\
=&\ \hat{G}_{21}^l\hat\rho_0+\hat{G}_{22}^l\hat{m}_0+\hat{G}_{23}^l\hat n_0+\hat{G}_{24}^l\hat{\omega}_0,
\end{array}
\end{equation}
and
\begin{equation}\label{3.5}
\begin{array}{lll}
&\hat{\omega}^l =-\widehat{\wedge^{-1}\nabla \varphi_2^l}-\widehat{\wedge ^{-1}{\rm div} \Phi_2^l}\\
=&\bigg[\Big(-\frac{\sqrt{\frac{c_1^2+c_2^2}{2}}}{8}\frac{\xi}{|\xi|}
+i\frac{\mu-\bar\mu}{8}\xi\Big)\mathrm{e}^{\theta_1t}
\!+\!\Big(\frac{\sqrt{\frac{c_1^2\!+\!c_2^2}{2}}}{8}\frac{\xi}{|\xi|}
\!+\!i\frac{\mu-\bar\mu}{8}\xi\Big)\mathrm{e}^{\theta_2t}\!-\!\frac{\sqrt{2\bar\rho}}{4}\frac{\xi}{|\xi|^2}\big(\mathrm{e}^{\theta_3t}\!-\!\mathrm{e}^{\theta_4t}\big)+\cdots\bigg]\hat{\rho}_0\\[2mm]
&+\Big[\big(\frac{1}{4}+i\frac{\mu+\bar\mu}{16}|\xi|\big)\mathrm{e}^{\theta_1t}
+\big(\frac{1}{4}-i\frac{\mu+\bar\mu}{16}|\xi|\big)\mathrm{e}^{\theta_2t}-\frac{1}{4}\big(\mathrm{e}^{\theta_3t}
+\mathrm{e}^{\theta_4t}\big)+\cdots\Big]\frac{\xi\xi^T}{|\xi|^{2}}\hat{m}_{0}\\[2mm]
&+\Big[\big(\frac{c_1^2\!+\!c_2^2}{8\sqrt{2\bar\rho}}\frac{\xi}{|\xi|}\!+\!i\frac{\mu\!-\!\bar\mu}{8}\xi\big)\mathrm{e}^{\theta_1t}
\!+\!\big(-\frac{c_1^2\!+\!c_2^2}{8\sqrt{2\bar\rho}}\frac{\xi}{|\xi|}\!+\!i\frac{\mu-\bar\mu}{8}\xi\big)\mathrm{e}^{\theta_2t}
\!+\!\frac{\sqrt{2\bar\rho}}{4}\frac{\xi}{|\xi|^2}\big(\mathrm{e}^{\theta_3t}
\!-\!\mathrm{e}^{\theta_4t}\big)+\cdots\Big]\hat{n}_0\\[2mm]
&+\Big[\big(\frac{1}{4}+i\frac{\mu+\bar\mu}{16}|\xi|\big)\mathrm{e}^{\theta_1t}
+\big(\frac{1}{4}-i\frac{\mu+\bar\mu}{16}|\xi|\big)\mathrm{e}^{\theta_2t}+\frac{1}{4}\big(\mathrm{e}^{\theta_3t}
+\mathrm{e}^{\theta_4t}\big)+\cdots\Big]\frac{\xi\xi^T}{|\xi|^{2}}\hat{\omega}_{0}\\
&\ \ \ \ \ +\big(I-\frac{\xi\xi^T}{|\xi|^{2}}\big)e^{-\frac{\bar{\mu}_1}{\bar\rho}|\xi|^2t}\hat{\omega}_{0}\\
=&\ \hat{G}_{41}^l\hat\rho_0+\hat{G}_{42}^l\hat{m}_0+\hat{G}_{43}^l\hat n_0+\hat{G}_{44}^l\hat{\omega}_0.
\end{array}
\end{equation}

From Euler formula and the spectrum analysis for the low frequency in Section 2, one can easily see that each entry of Green's function $\hat{G}^l(\xi,t)$ can be regarded as the analytic function of $|\xi|^2$ and hence it is analytic on $\xi$. This is the basis for deducing the pointwise estimates for the low frequency part of Green's function by using complex analysis or real analysis.

\subsection{Space-time behavior of $G^l(x,t)$}

Because of the analyticity of the each entry of Green's function in the low frequency, we can mainly deal with the leading term of the each component since the rest terms just have the faster temporal decay rate. We take several typical leading terms for examples. To avoid the seeming singularity in these terms at $\xi=0$, we have to use suitable reformulations.
The typical one is $\hat{G}_{22}^l$ since it contains nonlocal operator with symbol $\frac{\xi\xi^T}{|\xi|^2}$, which can be rewritten as
\begin{equation}\label{3.7}
\begin{array}{rl}
&\!\!\bigg[\big(\frac{1}{4}+i\frac{\mu+\bar\mu}{16}|\xi|\big)\mathrm{e}^{\theta_1t}
+\big(\frac{1}{4}-i\frac{\mu+\bar\mu}{16}|\xi|\big)\mathrm{e}^{\theta_2t}+\frac{1}{4}\big(\mathrm{e}^{\theta_3t}
+\mathrm{e}^{\theta_4t}\big)-e^{-\frac{\mu_1}{\bar\rho}|\xi|^2t}\bigg]\frac{\xi\xi^T}{|\xi|^{2}}\chi_1(\xi)\\[3mm]
=&\!\!\bigg[\big(\frac{1}{2}\cos({\rm Im}(\theta_1)t)e^{Re(\theta_1)t}-\frac{\mu+\bar\mu}{8}|\xi|\sin({\rm Im}(\theta_1)t)\big)e^{{\rm Re}(\theta_1)t}+\frac{\mathrm{e}^{\theta_3t}
+\mathrm{e}^{\theta_4t}}{4}-e^{-\frac{\mu_1}{\bar\rho}|\xi|^2t}\bigg]\frac{\xi\xi^T}{|\xi|^{2}} \chi_1(\xi)\\[3mm]
=&\!\!\frac{1}{2}\cos(c|\xi|t)\cos(|\xi|\beta(|\xi|^2)t)e^{-\frac{\mu+\bar\mu}{4\bar\rho}|\xi|^2t+O(|\xi|^4)t}\frac{\xi\xi^T}{|\xi|^{2}}\chi_1(\xi)\\[3mm]
&\!\!-\frac{1}{2}\frac{\sin(c|\xi|t)}{|\xi|}\frac{\sin(|\xi|\beta(|\xi|^2)t)}{|\xi|}e^{-\frac{\mu+\bar\mu}{4\bar\rho}|\xi|^2t+O(|\xi|^4)t}\xi\xi^T\chi_1(\xi)\\[3mm]
&\!\!-\frac{\mu+\bar\mu}{8}\big(\frac{\sin(c|\xi|t)}{|\xi|}\cos(|\xi|\beta(|\xi|^2)t)+\cos(c|\xi|t)\frac{\sin(|\xi|\beta(|\xi|^2)t)}{|\xi|}\big)
e^{-\frac{\mu+\bar\mu}{4\bar\rho}|\xi|^2t+O(|\xi|^4)t}\xi\xi^T\chi_1(\xi)\\[3mm]
&\!\!+\frac{\xi\xi^T}{|\xi|^{2}}\bigg[\frac{1}{2}e^{-\frac{\mu+\bar\mu}{4\bar\rho}|\xi|^2t+O(|\xi|^4)t}\cos({\rm Im}(\theta_3)t)\chi_1(\xi)-e^{-\frac{\mu_1}{\bar\rho}|\xi|^2t}\chi_1(\xi)\bigg]\\
:=&\!\! I_1+I_2+I_3+I_4,
\end{array}
\end{equation}
where $\beta(|\xi|^2)$ is analytic on $\xi$ and hence each term in the above identity is analytic on $\xi$. The wave operators $\mathbf{w}$ and $\mathbf{w}_t$ in (\ref{3.7}) are defined in Fourier space with the symbols $\hat{\mathbf{w}}(\xi,t)=\frac{\sin(c|\xi|t)}{c|\xi|}$ and $\hat{\mathbf{w}}_t(\xi, t)=\cos (c|\xi|t)$, respectively.

We first have that
\begin{equation}\label{3.8}
\begin{array}{rl}
I_1=&\frac{1}{2}(\cos(c|\xi|t)-1)\cos(|\xi|\beta(|\xi|^2)t)e^{-\frac{\mu+\bar\mu}{4\bar\rho}|\xi|^2+O(|\xi|^4)t}\frac{\xi\xi^T}{|\xi|^{2}}\chi_1(\xi)\\
&+(\cos(|\xi|\beta(|\xi|^2)t)-1)e^{-\frac{\mu+\bar\mu}{4\bar\rho}|\xi|^2+O(|\xi|^4)t}\frac{\xi\xi^T}{|\xi|^{2}}\chi_1(\xi)
+\frac{1}{2}e^{-\frac{\mu+\bar\mu}{4\bar\rho}|\xi|^2+O(|\xi|^4)t}\frac{\xi\xi^T}{|\xi|^{2}}\chi_1(\xi)\\
=&\underbrace{\big\{\frac{1}{2}(\cos(c|\xi|t)-1)e^{-\frac{\mu+\bar\mu}{8\bar\rho}|\xi|^2+O(|\xi|^4)t}\frac{\xi\xi^T}{|\xi|^{2}}\big\}}_{I_{1,1}}
\times\underbrace{\big\{\cos(|\xi|\beta(|\xi|^2)t)e^{-\frac{\mu+\bar\mu}{8\bar\rho}|\xi|^2+O(|\xi|^4)t}\chi_1(\xi)\big\}}_{I_{1,2}}\\
&+\underbrace{\frac{\xi\xi^T}{|\xi|^{2}}\chi_1(\xi)(\cos(|\xi|\beta(|\xi|^2)t)-1)e^{-\frac{\mu+\bar\mu}{4\bar\rho}|\xi|^2+O(|\xi|^4)t}}_{I_{1,3}}
+\underbrace{\frac{1}{2}e^{-\frac{\mu+\bar\mu}{4\bar\rho}|\xi|^2+O(|\xi|^4)t}\frac{\xi\xi^T}{|\xi|^{2}}\chi_1(\xi)}_{I_{1,4}}.
\end{array}
\end{equation}
Here $I_{1,1}$ is the Riesz wave as in Liu-Noh \cite{ls} and Li \cite{ld} for the isentropic and non-isentropic compressible Navier-Stokes system. Its pointwise space-time description contains both the Huygens' wave and the Riesz wave (diffusion wave)
\begin{equation}\label{3.9}
\begin{array}{rl}
&\displaystyle\left|\int_{\mathbb{R}^3}e^{i \xi\cdot x}\xi^\alpha (\cos(c|\xi| t)-1)\frac{\xi\xi^T}{|\xi|^2}e^{-\frac{\mu+\bar\mu}{8\bar\rho}|\xi|^2t}\chi_1(\xi) d\xi\right|\\
\leq &\displaystyle C\Big((1+t)^{-\frac{3+|\alpha|}{2}}\big(1+\frac{|x|^2}{1+t}\big)^{-\frac{3+|\alpha|}{2}}+(1+t)^{-\frac{4+|\alpha|}{2}}\big(1+\frac{(|x|-ct)^2}{1+t}\big)^{-N}\Big).
\end{array}
\end{equation}
For $I_{1,2}$, we first divide it into $(\cos(|\xi|\beta(|\xi|^2)t)-1)e^{-(\frac{\mu+\bar\mu}{8\bar\rho}|\xi|^2+O(|\xi|^4))t}\chi_1(\xi)$ and $e^{-(\frac{\mu+\bar\mu}{8\bar\rho}|\xi|^2+O(|\xi|^4))t}\chi_1(\xi)$, and then use Lemma \ref{A.2} and the standard real analysis, one can have the following estimate
\begin{equation}\label{3.10}
|I_{1,2}|\leq C(1+t)^{-\frac{3}{2}}\big(1+\frac{|x|^2}{1+t}\big)^{-N},\ {\rm for\ any\ constant}\ N>0.
\end{equation}
Combining (\ref{3.9}) and (\ref{3.10}) and using the convolution estimates, one has
\begin{equation}\label{3.11}
|\mathcal{F}^{-1}(\xi^\alpha I_{1,1}\cdot I_{1,2})|\leq C\Big((1+t)^{-\frac{3+|\alpha|}{2}}\big(1+\frac{|x|^2}{1+t}\big)^{-\frac{3}{2}}+(1+t)^{-\frac{4+|\alpha|}{2}}\big(1+\frac{(|x|-ct)^2}{1+t}\big)^{-N}\Big).
\end{equation}

Note that $I_{1,3}$ is analytic and can be bounded from above by $e^{\mathcal{O}(|\xi|^3)t}$. Therefore, $|\mathcal{F}^{-1}(\xi^\alpha \times I_{1,3})|\leq C(1+t)^{-\frac{3+|\alpha|}{2}}\big(1+\frac{|x|^2}{1+t}\big)^{-N}$ for any constant $N>0$. Next, we consider the second kind of Riesz waves $\xi^\alpha \times I_{1,4}$ and $\xi^\alpha\times I_4$. In fact, in virtue of Corollary \ref{A.8}, we know that their inverse Fourier transformations can be bounded from above by $(1+t)^{-\frac{3+|\alpha|}{2}}\big(1+\frac{|x|^2}{1+t}\big)^{-\frac{3+|\alpha|}{2}}$.

Then, consider $G_{21}^l,G_{23}^l,G_{41}^l,G_{43}^l$, since there additionally exist the terms containing another Riesz operator $\nabla(-\Delta)^{-1}$ with the symbol $\frac{i\xi}{|\xi|^2}$. Fortunately, this Riesz operator only acts on the ingredient
 behaving like the heat kernel. Hence, by using Lemma \ref{A.6} one has that
\begin{equation}\label{3.12}
\begin{array}{rl}
&|D_x^\alpha (G_{21}^l,G_{23}^l,G_{41}^l,G_{43}^l)|\\
\leq & C\Big((1+t)^{-\frac{2+|\alpha|}{2}}\big(1+\frac{|x|^2}{1+t}\big)^{-\frac{2+|\alpha|}{2}}+(1+t)^{-\frac{4+|\alpha|}{2}}\big(1+\frac{(|x|-ct)^2}{1+t}\big)^{-N}\Big).
\end{array}
\end{equation}

The other terms in $\hat{G}^l(\xi,t)$ can be treated similarly. In a conclusion, we can get the pointwise estimate for $G^l(x,t)$.
\begin{lemma}\label{l 3.3} For any $|\alpha|\geq0$, there exists a constant $C>0$ such that
\begin{equation*}
\begin{array}{rl}
&|D_x^\alpha(G_{11}^l,G_{12}^l,G_{13}^l,G_{14}^l,G_{31}^l,G_{32}^l,G_{33}^l,G_{34}^l)|\\[2mm]
\leq &C(1+t)^{-\frac{3+|\alpha|}{2}}\big(1+\frac{|x|^2}{1+t}\big)^{-N}+C(1+t)^{-\frac{4+|\alpha|}{2}}\Big(1+\frac{(|x|-ct)^2}{1+t}\Big)^{-N},\\[2mm]
&|D_x^\alpha (G_{22}^l,G_{24}^l,G_{42}^l,G_{44}^l)|\\[2mm]
\leq &C\bigg((1+t)^{-\frac{2+|\alpha|}{2}}\Big(1+\frac{|x|^2}{1+t}\Big)^{-\frac{2+|\alpha|}{2}}+(1+t)^{-\frac{4+|\alpha|}{2}}\Big(1+\frac{(|x|-ct)^2}{1+t}\Big)^{-N}\bigg),\\[2mm]
&|D_x^\alpha (G_{21}^l,G_{23}^l,G_{41}^l,G_{43}^l)|\\[2mm]
\leq & C\bigg((1+t)^{-\frac{2+|\alpha|}{2}}\Big(1+\frac{|x|^2}{1+t}\Big)^{-\frac{2+|\alpha|}{2}}+(1+t)^{-\frac{4+|\alpha|}{2}}\Big(1+\frac{(|x|-ct)^2}{1+t}\Big)^{-N}\bigg),
\end{array}
\end{equation*}
where $c$ is the base sound speed and $N$ is a arbitrarily large positive constant.
\end{lemma}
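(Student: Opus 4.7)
The plan is to extract, entry by entry from (\ref{3.2})--(\ref{3.5}), the finitely many leading-order symbols and apply the inversion lemmas collected in the Appendix (Lemma \ref{A.2}, Lemma \ref{A.6}, Corollary \ref{A.8}) along the template worked out explicitly for $\hat G_{22}^l$ in (\ref{3.7})--(\ref{3.11}). The omitted ``$\cdots$'' remainders carry extra powers of $|\xi|$, translate to extra factors of $(1+t)^{-1/2}$ after inverse Fourier transform, and are therefore absorbed into the stated bounds. By Lemma \ref{l 2.1}, the modes $\theta_{1,2}$ produce $\cos(c|\xi|t)$ and $\sin(c|\xi|t)/|\xi|$ (with $c^2=(c_1^2+c_2^2)/2$) against a dissipative Gaussian $e^{-\frac{\mu+\bar\mu}{4\bar\rho}|\xi|^2 t+O(|\xi|^4)t}$ and a bounded oscillatory correction $\cos(|\xi|\beta(|\xi|^2)t)$, while $\theta_{3,4}$ produce oscillations of the form $e^{\pm i\sqrt{2\bar\rho}\,t}$ (plus $O(|\xi|^2)$ corrections inside the sinusoid) times the same Gaussian.

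Second, I would handle the apparent singularities at $\xi=0$ in three groups matching the three estimates. For the density entries $G_{11}^l,\dots,G_{34}^l$, the only dangerous factors have the form $|\xi|^{-1}(e^{\theta_1 t}-e^{\theta_2 t})$, which are regularized because $\operatorname{Im}\theta_{1,2}\sim c|\xi|$; they reassemble into genuine wave operators $\sin(c|\xi|t)/|\xi|$, and the Huygens-plus-D-wave bound of the first estimate follows from (\ref{3.9})--(\ref{3.11}). For the momentum-momentum entries $G_{22}^l, G_{24}^l, G_{42}^l, G_{44}^l$, the Leray symbol $\xi\xi^T/|\xi|^2$ combined with the transverse heat kernels $(I-\xi\xi^T/|\xi|^2)e^{-\mu_1|\xi|^2 t/\bar\rho}$ and its $\bar\mu_1$-analogue reproduces the Riesz-wave structure $I_{1,1}$ of (\ref{3.8}), so Corollary \ref{A.8} yields the slower $(1+t)^{-(2+|\alpha|)/2}$ D-wave component of the second estimate; the sound-wave terms with $e^{\theta_{1,2}t}$ present in the same entries are handled exactly as in (\ref{3.7})--(\ref{3.11}) and supply the Huygens component. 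For the momentum-from-density entries $G_{21}^l, G_{23}^l, G_{41}^l, G_{43}^l$, the genuine Riesz factor $\xi/|\xi|^2$ multiplies $e^{\theta_3 t}-e^{\theta_4 t}$, whose sinusoid $\sin(\sqrt{2\bar\rho}\,t+O(|\xi|^2)t)$ does not vanish at $\xi=0$; but this factor depends on $\xi$ only through the smooth dissipative Gaussian, so $\xi/|\xi|^2$ acts on a heat-kernel-type symbol and Lemma \ref{A.6} gives precisely the third estimate.

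Routine convolution of the Huygens pieces with the Gaussian envelope, together with Lemma \ref{A.2} to absorb the bounded oscillatory correction $\cos(|\xi|\beta(|\xi|^2)t)$, then closes each case. The main obstacle is the bookkeeping for the momentum-from-density block $G_{21}^l, G_{23}^l, G_{41}^l, G_{43}^l$: one must verify carefully that the non-regularized Riesz multiplier $\xi/|\xi|^2$ is always factored against a function whose low-frequency behavior is truly heat-kernel-like (i.e.\ smooth in $\xi$ modulo a time-only oscillation), so that the non-locality does not generate any H-wave content. This $\xi=0$ cancellation is precisely what makes the third estimate possible, and it is the local analogue of the finer cancellation exploited later in Lemma \ref{l 3.6} and Section 3.5 to ensure that the space-time estimate of $Z\rho-n$ contains no Huygens wave.
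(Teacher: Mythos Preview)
Your proposal is correct and follows essentially the same approach as the paper: work entry by entry from the explicit low-frequency expansions (\ref{3.2})--(\ref{3.5}), regularize the apparent $\xi=0$ singularities using the wave-operator structure of $\theta_{1,2}$ and the heat-kernel structure of $\theta_{3,4}$, and invoke Lemma \ref{A.2}, Lemma \ref{A.6}, and Corollary \ref{A.8} to invert each piece. The paper's own proof is in fact sketchier than yours---it works out only the template entry $\hat G_{22}^l$ in (\ref{3.7})--(\ref{3.11}) and the Riesz block $G_{21}^l,G_{23}^l,G_{41}^l,G_{43}^l$ via Lemma \ref{A.6}, then declares ``the other terms in $\hat G^l(\xi,t)$ can be treated similarly''---whereas you have organized the argument into the three natural groups and identified the correct mechanism in each.
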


\subsection{Space-time behavior of $G^h(x,t)$}

By using the definition of the Hodge decomposition, we know that
\begin{equation}\label{2.23}
\begin{array}{rl}
&\hat{m}^h=-\widehat{\Lambda^{-1}\nabla\varphi_1^h}-\widehat{\Lambda^{-1}{\rm div}\Phi_1^h}=\frac{i\xi}{|\xi|}\hat{\varphi}_1^h+\frac{i\xi^T}{|\xi|}\hat\Phi_1^h,\\
&\hat{\omega}^h=-\widehat{\Lambda^{-1}\nabla\varphi_2^h}-\widehat{\Lambda^{-1}{\rm div}\Phi_2^h}=\frac{i\xi}{|\xi|}\hat{\varphi}_2^h+\frac{i\xi^T}{|\xi|}\hat\Phi_2^h,
\end{array}
\end{equation}
which together with Lemma \ref{l 2.4} and the equations of the incompressible part in (\ref{2.8}), we can immediately get the following asymptotic expansion of the unknowns in the Fourier space when $|\xi|\gg1$.
\begin{lemma}\label{l 2.4} There exists a positive constant  $\eta_{2} \gg 1$  such that, for  $|\xi| \gg \eta_{2}$, we can induce
\begin{equation}\label{2.27}
\begin{aligned}
\hat{\rho}^{h}
= &\big[\mathcal{O}(1)\mathrm{e}^{\theta_{1}t}+\mathcal{O}(|\xi|^{-2})\mathrm{e}^{\theta_{2} t}+\mathcal{O}(|\xi|^{-2})\mathrm{e}^{\theta_{3} t} +\mathcal{O}(|\xi|^{-4})\mathrm{e}^{\theta_{4} t} \big]\hat{\rho}_{0}\\
&+\big[\mathcal{O}(|\xi|^{-1})\mathrm{e}^{\theta_{1} t}+\mathcal{O}(|\xi|^{-1})\mathrm{e}^{\theta_{2} t}+v(|\xi|^{-3})\mathrm{e}^{\theta_{3} t}+\mathcal{O}(|\xi|^{-6})\mathrm{e}^{\theta_{4} t}\big]i\frac{\xi \hat{m}_{0}^{h}}{|\xi |}\\
&+\big[\mathcal{O}(|\xi|^{-2})\mathrm{e}^{\theta_{1} t}+\mathcal{O}(|\xi|^{-4})\mathrm{e}^{\theta_{2} t}+\mathcal{O}(|\xi|^{-2})\mathrm{e}^{\theta_{3} t}+\mathcal{O}(|\xi|^{-6})\mathrm{e}^{\theta_{4} t} \big]\hat{n}_{0}\\
&+\big[\mathcal{O}(|\xi|^{-3})\mathrm{e}^{\theta_{1} t}+\mathcal{O}(|\xi|^{-5})\mathrm{e}^{\theta_{2} t}+\mathcal{O}(|\xi|^{-3})\mathrm{e}^{\theta_{3} t}+\mathcal{O}(|\xi|^{-5})\mathrm{e}^{\theta_{4} t}\big]i\frac{\xi \hat{\omega}_{0}}{|\xi |}+\cdots,\ \ \ \ \
\end{aligned}
\end{equation}
\begin{equation}\label{2.28}
\begin{aligned}
\hat{n}^{h}
=&\big[\mathcal{O}(|\xi|^{-2})\mathrm{e}^{\theta_{1}t}+\mathcal{O}(|\xi|^{-6})\mathrm{e}^{\theta_{2} t}+\mathcal{O}(|\xi|^{-2})\mathrm{e}^{\theta_{3} t} +\mathcal{O}(|\xi|^{-4})\mathrm{e}^{\theta_{4} t} \big]\hat{\rho}_{0}\\
&+\big[\mathcal{O}(|\xi|^{-3})\mathrm{e}^{\theta_{1} t}+\mathcal{O}(|\xi|^{-5})\mathrm{e}^{\theta_{2} t}+\mathcal{O}(|\xi|^{-3})\mathrm{e}^{\theta_{3} t}+\mathcal{O}(|\xi|^{-5})\mathrm{e}^{\theta_{4} t}\big]i\frac{\xi \hat{m}_{0}^{h}}{|\xi |}\\
&+\big[\mathcal{O}(|\xi|^{-2})\mathrm{e}^{\theta_{1} t}+\mathcal{O}(|\xi|^{-6})\mathrm{e}^{\theta_{2} t}+\mathcal{O}(|\xi|^{-2})\mathrm{e}^{\theta_{3} t}+\mathcal{O}(|\xi|^{-4})\mathrm{e}^{\theta_{4} t} \big]\hat{n}_{0}\\
&+\big[\mathcal{O}(|\xi|^{-2})\mathrm{e}^{\theta_{1} t}+\mathcal{O}(|\xi|^{-3})\mathrm{e}^{\theta_{2} t}+\mathcal{O}(|\xi|^{-1})\mathrm{e}^{\theta_{3} t}+\mathcal{O}(|\xi|^{-1})\mathrm{e}^{\theta_{4} t}\big]i\frac{\xi \hat{\omega}_{0}}{|\xi |}+\cdots,\ \ \ \ \
\end{aligned}
\end{equation}
\begin{equation}\label{2.29}
\begin{aligned}
\hat{m}^{h}
=&\big[\mathcal{O}(1)\mathrm{e}^{\theta_{1}t}+\mathcal{O}(1)\mathrm{e}^{\theta_{2} t}+\mathcal{O}(|\xi|^{-2})\mathrm{e}^{\theta_{3} t} +\mathcal{O}(|\xi|^{-4})\mathrm{e}^{\theta_{4} t} \big]\frac{\xi}{|\xi|^2}\hat{\rho}_{0}\\
&+\big[\mathcal{O}(|\xi|^{-2})\mathrm{e}^{\theta_{1} t}+\mathrm{e}^{\theta_{2} t}+\mathcal{O}(|\xi|^{-2})\mathrm{e}^{\theta_{3} t}+\mathcal{O}(|\xi|^{-4})\mathrm{e}^{\theta_{4} t}\big]\frac{\xi\otimes\xi}{|\xi|^2}\hat{m}_{0}\\
&+\big[\mathcal{O}(1)\mathrm{e}^{\theta_{1} t}+\mathcal{O}(1)\mathrm{e}^{\theta_{2} t}+\mathcal{O}(1)\mathrm{e}^{\theta_{3} t}+\mathcal{O}(|\xi|^{-2})\mathrm{e}^{\theta_{4} t} \big]\frac{\xi}{|\xi|^4}\hat{n}_{0}\\
&+\big[\mathcal{O}(1)\mathrm{e}^{\theta_{1} t}+\mathcal{O}(1)\mathrm{e}^{\theta_{2} t}+\mathcal{O}(1)\mathrm{e}^{\theta_{3} t}+\mathcal{O}(1)\mathrm{e}^{\theta_{4} t}\big]\frac{\xi\otimes\xi}{|\xi|^6}\hat{\omega}_{0}\\
&+\big(I-\frac{\xi\otimes\xi}{|\xi|^2}\big)\mathrm{e}^{-\frac{\mu_1}{\bar\rho}|\xi|^2 t}\hat{m}_{0}+\cdots,\ \ \ \ \
\end{aligned}
\end{equation}
and
\begin{equation}\label{2.30}
\begin{aligned}
\hat{\omega}^{h}
=&\big[\mathcal{O}(1)\mathrm{e}^{\theta_{1}t}+\mathcal{O}(|\xi|^{-2})\mathrm{e}^{\theta_{2} t}+\mathcal{O}(1)\mathrm{e}^{\theta_{3} t} +\mathcal{O}(1)\mathrm{e}^{\theta_{4} t} \big]\frac{\xi}{|\xi|^4}\hat{\rho}_{0}\\
&+\big[\mathcal{O}(1)\mathrm{e}^{\theta_{1} t}+\mathcal{O}(|\xi|^{-1})\mathrm{e}^{\theta_{2} t}+\mathcal{O}(|\xi|^{-1})\mathrm{e}^{\theta_{3} t}+\mathcal{O}(|\xi|^{-1})\mathrm{e}^{\theta_{4} t}\big]\frac{\xi\otimes\xi}{|\xi|^5}\hat{m}_{0}\\
&+\big[\mathcal{O}(1)\mathrm{e}^{\theta_{1} t}+\mathcal{O}(|\xi|^{-4})\mathrm{e}^{\theta_{2} t}+\mathcal{O}(1)\mathrm{e}^{\theta_{3} t}+\mathcal{O}(1)\mathrm{e}^{\theta_{4} t} \big]\frac{\xi}{|\xi|^2}\hat{n}_{0}\\
&+\big[\mathcal{O}(|\xi|^{-2})\mathrm{e}^{\theta_{1} t}+\mathcal{O}(|\xi|^{-4})\mathrm{e}^{\theta_{2} t}+\mathcal{O}(|\xi|^{-2})\mathrm{e}^{\theta_{3} t}+\frac{\bar\mu}{\mu}\mathrm{e}^{\theta_{4} t}\big]\frac{\xi\otimes\xi}{|\xi|^2}\hat{\omega}_{0}\\
&+\big(I-\frac{\xi\otimes\xi}{|\xi|^2}\big)\mathrm{e}^{-\frac{\bar\mu_1}{\bar\rho}|\xi|^2 t}\hat{\omega}_{0}+\cdots.
\end{aligned}
\end{equation}
Here ``$\cdots$" are the rest terms, which don't affect the pointwise results in the high frequency part.
\end{lemma}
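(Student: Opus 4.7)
The plan is to combine the solution formula (2.13) for the compressible subsystem with the projector expansions of Lemma 2.3, and then use the Hodge recomposition (2.23) together with (2.8) to reconstruct $\hat m^h$ and $\hat\omega^h$. From (2.13), restricted to $|\xi|\gg\eta_2$,
\begin{equation*}
(\hat\rho^h,\hat\varphi_1^h,\hat n^h,\hat\varphi_2^h)^T = \sum_{j=1}^4 e^{\theta_j t}\,P^{j,h}(\xi)\,\hat{\mathcal U}_0(\xi),
\end{equation*}
where the reformulated initial vector $\hat{\mathcal U}_0=(\hat\rho_0,\widehat{\Lambda^{-1}\mathrm{div}\,m_0},\hat n_0,\widehat{\Lambda^{-1}\mathrm{div}\,\omega_0})^T$ has its second and fourth entries equal to $\tfrac{i\xi\cdot\hat m_0}{|\xi|}$ and $\tfrac{i\xi\cdot\hat\omega_0}{|\xi|}$, respectively. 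Substituting the entries of $P^{j,h}(\xi)$ from Lemma 2.3, the formula (2.27) for $\hat\rho^h$ is produced by reading off row one of each $P^{j,h}(\xi)$ and pairing columns $1,2,3,4$ with $\hat\rho_0,\tfrac{i\xi\cdot\hat m_0}{|\xi|},\hat n_0,\tfrac{i\xi\cdot\hat\omega_0}{|\xi|}$; for instance, the $(1,1)$-entries of $P^{1,h},P^{2,h},P^{3,h},P^{4,h}$ are of orders $\mathcal O(1),\mathcal O(|\xi|^{-2}),\mathcal O(|\xi|^{-2}),\mathcal O(|\xi|^{-4})$, which are exactly the $\hat\rho_0$-coefficients in the first line of (2.27). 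The formula (2.28) for $\hat n^h$ is obtained identically from row three of $P^{j,h}$.

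To handle $\hat m^h$ I would use (2.23) together with the first equation of (2.8), writing
\begin{equation*}
\hat m^h = \tfrac{i\xi}{|\xi|}\,\hat\varphi_1^h + \Bigl(I-\tfrac{\xi\xi^T}{|\xi|^2}\Bigr)e^{-\frac{\mu_1}{\bar\rho}|\xi|^2 t}\,\hat m_0,
\end{equation*}
so that the incompressible piece already supplies the explicit $(I-\tfrac{\xi\otimes\xi}{|\xi|^2})e^{-\mu_1|\xi|^2 t/\bar\rho}\hat m_0$ term appearing in (2.29). For the compressible piece I would take row two of $P^{j,h}$, multiply by $\tfrac{i\xi}{|\xi|}$, and collect: the columns against $\hat\rho_0$ and $\hat n_0$ pick up one extra $\tfrac{\xi}{|\xi|}$, producing the $\tfrac{\xi}{|\xi|^k}$-type factors in (2.29), while the columns acting on the reformulated data $\tfrac{i\xi^T}{|\xi|}\hat m_0$ and $\tfrac{i\xi^T}{|\xi|}\hat\omega_0$ combine with the $\tfrac{i\xi}{|\xi|}$ prefactor to yield the dyadic $\tfrac{\xi\otimes\xi}{|\xi|^m}$ structure. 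The expression (2.30) for $\hat\omega^h$ follows identically, this time using row four of $P^{j,h}$ and the second heat equation in (2.8) with viscosity $\bar\mu_1$.

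The main technical obstacle is purely combinatorial bookkeeping: in each of the sixteen slots one must correctly compound four multiplicative factors of $|\xi|^{-1}$, namely the order of the relevant $P^{j,h}$-entry, the $|\xi|^{-1}$ hidden in the $\Lambda^{-1}\mathrm{div}$ reformulation of the initial data, the additional $|\xi|^{-1}$ produced by the prefactor $\tfrac{i\xi}{|\xi|}$ in the Hodge recomposition for $\hat m^h$ and $\hat\omega^h$, and finally the algebraic type (vector $\tfrac{\xi}{|\xi|^k}$ versus matrix $\tfrac{\xi\otimes\xi}{|\xi|^k}$) of the resulting tensor. A small but essential verification is that the entries displayed in Lemma 2.3 are the \emph{sharp} leading orders of $P^{j,h}$, so that the $\mathcal O(\cdot)$-bounds asserted in (2.27)--(2.30) are not absorbed into the ``$\cdots$'' remainders; this is confirmed by returning to the cofactor formula (2.12) with the eigenvalue expansions of Lemma 2.2 and checking that no unexpected cancellation from the differences $\theta_j-\theta_k$ lowers the advertised order.
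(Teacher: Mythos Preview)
Your proposal is correct and follows essentially the same route as the paper: the paper simply remarks that (2.27)--(2.30) follow immediately from the Hodge recomposition identity (2.23), the projector expansions of Lemma~2.3, and the heat-equation solution (2.8) for the incompressible parts, which is precisely the scheme you carry out in detail. Your added bookkeeping discussion (tracking the four sources of $|\xi|^{-1}$ factors and distinguishing vector versus dyadic outputs) merely makes explicit what the paper leaves as ``we can immediately get.''
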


According to the spectral analysis for the high frequency part in Lemma \ref{l 2.4}, we can conclude that there exists two types of singularities in the high frequency part of Green's function. One is the same as the heat kernel with the singularity at $t=0$, and the other is the same as $|\xi|^{\beta}e^{-t}$ with an integer $\beta\leq-1$. Consequently, by using Lemma \ref{A.3}, we can get the pointwise description for the high frequency part as follows.
\begin{lemma}\label{l 3.4}
There exists a constant $C>0$ such that the high frequency part satisfies
\begin{equation}\label{3.1}
| D_{x}^{\alpha}(\chi_3(D)G(x,t)-G_S(x,t))|\leq Ce^{-t/C}(1+|x|^2)^{-N},
\end{equation}
for $|\alpha|\geq0$ and any integer $N>0$. Here the singular part $G_S(x,t)$ can be described as
\begin{equation}
G_S(x,t)=Ce^{-t/C}\Big[t^{-\frac{3+|\alpha|}{2}}e^{-\frac{|x|^2}{Ct}}+\delta(x)\Big].
\end{equation}
\end{lemma}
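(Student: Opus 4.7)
The plan is to use the high-frequency spectral expansions of Lemma \ref{l 2.4} to decompose each entry of $\chi_3(\xi)\hat G(\xi,t)$ into a finite sum of elementary symbols $p(\xi)\,\mathrm{e}^{\theta_j(|\xi|)t}$ (times geometric factors such as $\xi/|\xi|$ or $\xi\otimes\xi/|\xi|^2$), where $p(\xi)$ is $C^\infty$ on the support of $\chi_3$. The structural observation driving the proof is that Lemma \ref{l 2.2} splits the four eigenvalues into two qualitatively distinct classes: the \emph{bounded} pair $\theta_1,\theta_3$, whose real parts tend to negative constants and hence produce the exponential factor $\mathrm{e}^{-t/C}$ for $|\xi|$ large, and the \emph{diffusive} pair $\theta_2,\theta_4$, whose real parts behave like $-C|\xi|^2$. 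These two classes are responsible for the two singular pieces appearing in $G_S$.

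For the diffusive pair each term is of the form $|\xi|^\beta\chi_3(\xi)\mathrm{e}^{\theta_{2,4}t}$ with integer $\beta$, and I would use the identity
\[
\chi_3(\xi) \;=\; 1 \,-\, \bigl(1-\chi_3(\xi)\bigr)
\]
to split it into a full-space heat-kernel-type symbol $|\xi|^\beta\mathrm{e}^{\theta_{2,4}t}$ and a compactly supported correction $(1-\chi_3(\xi))|\xi|^\beta\mathrm{e}^{\theta_{2,4}t}$. The first piece, after absorbing the bounded subleading contributions of $\theta_{2,4}$ into an overall factor of $\mathrm{e}^{-t/C}$, inverse-Fourier-transforms (after $D_x^\alpha$) to exactly $C\,t^{-(3+|\alpha|)/2}\mathrm{e}^{-|x|^2/Ct}\,\mathrm{e}^{-t/C}$, which is the first piece of $G_S$. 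On the compact support of $1-\chi_3(\xi)$ one has $\mathrm{e}^{-C|\xi|^2t}\leq\mathrm{e}^{-t/C}$, so the correction is a Schwartz function of $x$ multiplied by $\mathrm{e}^{-t/C}$, hence absorbed into the remainder.

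For the bounded pair each term is of the form $|\xi|^\beta\chi_3(\xi)\mathrm{e}^{\theta_{1,3}t}$ with $\beta\leq 0$, bounded above by $|\xi|^\beta\mathrm{e}^{-t/C}$. When $\beta=0$ — which occurs only in finitely many entries, e.g.\ the leading constants in $P^{1,h}$ and $P^{3,h}$ — the asymptotic constant coefficient survives, and the same splitting $\chi_3=1-(1-\chi_3)$ produces a $Ce^{-t/C}\delta(x)$ contribution (the second piece of $G_S$) together with a Schwartz correction coming from the compactly supported symbol $(1-\chi_3(\xi))$. When $\beta\leq -1$, the symbol is smooth and vanishes at infinity in $\xi$, and Lemma \ref{A.3} directly yields the pointwise bound $C\mathrm{e}^{-t/C}(1+|x|^2)^{-N}$. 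The higher-order $\mathcal{O}(|\xi|^{-2})$ remainders of the eigenvalue expansions in Lemma \ref{l 2.2} and of the projection expansions in Lemma \ref{l 2.4} only improve the decay in $\xi$, so they only strengthen the residual bound.

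The main obstacle is not any single estimate but the bookkeeping: identifying, across all $16$ matrix entries of $G^h$ and the four eigenvalue branches, which combinations produce the heat-kernel singularity, which produce the $\delta(x)$ singularity, and which are already of Schwartz type. The incompressible contributions $(I-\xi\otimes\xi/|\xi|^2)\mathrm{e}^{-\mu_1|\xi|^2t/\bar\rho}\hat m_0$ and $(I-\xi\otimes\xi/|\xi|^2)\mathrm{e}^{-\bar\mu_1|\xi|^2t/\bar\rho}\hat\omega_0$ from \eqref{2.8} deserve separate checking, but since $I-\xi\otimes\xi/|\xi|^2$ is $C^\infty$ and bounded on the support of $\chi_3$ they fit the same heat-kernel-plus-Schwartz framework. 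Summing the heat-kernel and $\delta(x)$ pieces over all entries defines $G_S(x,t)$, while the Schwartz residuals combine to the asserted bound; the derivative $D_x^\alpha$ only adjusts the power of $t$ in the heat-kernel factor and the number of integrations by parts needed in the Schwartz estimates.
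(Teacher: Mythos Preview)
Your overall strategy---classify the high-frequency contributions by eigenvalue type (diffusive $\theta_2,\theta_4$ versus bounded $\theta_1,\theta_3$), extract the heat-kernel and $\delta$ singularities as $G_S$, and absorb the rest into a Schwartz remainder via Lemma~\ref{A.3}---is exactly what the paper does. The paper's own argument is only a one-sentence pointer to Lemma~\ref{l 2.4} and Lemma~\ref{A.3}, so your writeup is in fact more detailed than the original.

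There is, however, a concrete error in your treatment of the diffusive piece. You split $\chi_3=1-(1-\chi_3)$ and then assert that ``on the compact support of $1-\chi_3(\xi)$ one has $e^{-C|\xi|^2t}\leq e^{-t/C}$.'' This is false: the support of $1-\chi_3$ is the ball $\{|\xi|\leq K+1\}$, which contains the origin, and there $e^{-C|\xi|^2t}=1$. Consequently your ``compactly supported correction'' does \emph{not} carry the exponential-in-$t$ decay, and your full-space heat kernel $\mathcal{F}^{-1}(e^{-c|\xi|^2t})$ does not carry it either. A related issue is that the high-frequency expansion of $\theta_{2,4}$ in Lemma~\ref{l 2.2} is only valid for $|\xi|\gg1$, so extending $e^{\theta_{2,4}t}$ to all of $\mathbb{R}^3$ via this expansion is not legitimate. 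The $e^{-t/C}$ factor in $G_S$ genuinely comes from the restriction $|\xi|\geq K$ on the support of $\chi_3$, and one must exploit this directly---for instance by writing $\chi_3(\xi)e^{\theta_2 t}=e^{-cK^2t/2}\cdot\chi_3(\xi)e^{(\theta_2+cK^2/2)t}$ and bounding the second factor by a heat-kernel symbol---rather than by passing to a global symbol. Once this is repaired the rest of your argument goes through, and matches the paper.
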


\subsection{Pointwise estimates of Green's function}

\quad\quad In this subsection, we conclude the pointwise estimates of Green's function by combining the above results. Up to now, we also need the estimate for the middle frequency part $G^m(x,t)$. For $G^m(x,t)=\chi_2(D)G(x,t)$, since it is bounded and analytic and the only possible pole has been excluded here, the estimate for the middle frequency part can be stated as follows:
\begin{equation*}\label{3.99}
| D_{x}^{\alpha}(\chi_2(D)G(x,t))|\leq Ce^{-t/C}(1+|x|^2)^{-N},\ {\rm for}\ |\alpha|\geq0,
\end{equation*}
and $N>0$ can be arbitrarily large. We refer readers to \cite{ld} and \cite{wang}, where the detailed proofs of the middle frequency part $G^m(x,t)$ for other compressible fluid models were provided.

In summary, we have the following
pointwise descriptions for Green's function.

\begin{theorem}\label{l 3.5} For any $|\alpha|\geq0$, there exists a constant $C>0$ such that
\begin{equation*}
\begin{array}{rl}
&|D_x^\alpha(G_{11}-G_S,G_{12}-G_S,G_{13}-G_S,G_{14}-G_S,G_{31}-G_S,G_{32}-G_S,G_{33}-G_S,G_{34}-G_S)|\\[1mm]
\leq &C(1+t)^{-\frac{3+|\alpha|}{2}}\big(1+\frac{|x|^2}{1+t}\big)^{-N}+C(1+t)^{-\frac{4+|\alpha|}{2}}\Big(1+\frac{(|x|-ct)^2}{1+t}\Big)^{-N},\\[2.2mm]
&|D_x^\alpha (G_{22}-G_S,G_{24}-G_S,G_{42}-G_S,G_{44}-G_S)|\\[1mm]
\leq &C\bigg((1+t)^{-\frac{3+|\alpha|}{2}}\Big(1+\frac{|x|^2}{1+t}\Big)^{-\frac{3+|\alpha|}{2}}+(1+t)^{-\frac{4+|\alpha|}{2}}\Big(1+\frac{(|x|-ct)^2}{1+t}\Big)^{-N}\bigg),\\[2.2mm]
&|D_x^\alpha (G_{21}-G_S,G_{23}-G_S,G_{41}-G_S,G_{43}-G_S)|\\[1mm]
\leq & C\bigg((1+t)^{-\frac{2+|\alpha|}{2}}\Big(1+\frac{|x|^2}{1+t}\Big)^{-\frac{2+|\alpha|}{2}}+(1+t)^{-\frac{4+|\alpha|}{2}}\Big(1+\frac{(|x|-ct)^2}{1+t}\Big)^{-N}\bigg).
\end{array}
\end{equation*}
Here $c$ is the base sound speed, $N$ is a positive constant which can be arbitrarily large, and the singular term $G_S$ arising from the high frequency part is defined in Lemma \ref{l 3.4}.
\end{theorem}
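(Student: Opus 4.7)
The plan is to prove Theorem~\ref{l 3.5} by simply collecting the three frequency-localized estimates that have just been established. For each entry $G_{ij}$ of the $8\times 8$ Green's matrix I would write
\[
G_{ij}(x,t)=G_{ij}^l(x,t)+G_{ij}^m(x,t)+G_{ij}^h(x,t),
\]
with the three pieces defined via the cut-offs $\chi_1,\chi_2,\chi_3$ introduced at the beginning of Section~3. The low-frequency part $G_{ij}^l$ already satisfies the target D-wave plus H-wave bound by Lemma~\ref{l 3.3}; the middle-frequency part $G_{ij}^m$ obeys $|D_x^\alpha G_{ij}^m|\leq Ce^{-t/C}(1+|x|^2)^{-N}$ (the symbol is smooth and compactly supported in the annulus $\varepsilon_1\leq|\xi|\leq K+1$, where the eigenvalues $\theta_j(\xi)$ have real parts uniformly bounded below zero, so the inverse Fourier transform is Schwartz in $x$ with exponentially decaying time factor); and Lemma~\ref{l 3.4} extracts from $G_{ij}^h$ the singular kernel $G_S$, leaving a remainder of the same exponentially decaying form $Ce^{-t/C}(1+|x|^2)^{-N}$.

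Once these pieces are in hand, the proof is completed by verifying that the exponentially decaying tails coming from $G^m$ and from $G^h-G_S$ are absorbed by each of the two waveforms on the right-hand side of the theorem. This is a routine comparison: for every $M,N>0$ one has
\[
e^{-t/C}(1+|x|^2)^{-N}\leq C_{M,N}(1+t)^{-M}\Big(1+\tfrac{|x|^2}{1+t}\Big)^{-N},
\]
and the analogous bound with $|x|-ct$ in place of $|x|$ follows by splitting $\{|x|\leq 2ct\}$ (where the exponential in $t$ eats any polynomial weight) from $\{|x|>2ct\}$ (where $1+|x|^2$ is comparable to $1+(|x|-ct)^2$). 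Therefore the three frequency contributions can be added and the bounds re-organized into the three blocks stated in the theorem, with the arbitrary constant $N$ in the low-frequency H-wave carried over unchanged.

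I do not expect any genuine obstacle in this assembly step; the serious work has already been carried out upstream, in particular the subtle cancellations inside $\hat G^l$ that allowed Lemma~\ref{l 3.3} to remove the nonlocal symbols $\xi\xi^T/|\xi|^2$ and $i\xi/|\xi|^2$ without producing spurious terms of order worse than the D-wave and H-wave shown above. The only item that deserves a moment of care is the bookkeeping for the three groups of entries, which differ by the presence of one or two factors of the Riesz operator $\nabla(-\Delta)^{-1}$: the first group (the density rows) inherits the sharp $(1+t)^{-(3+|\alpha|)/2}$ Gaussian tail, the second group (the velocity diagonal blocks) inherits the full Riesz profile $(1+t)^{-(3+|\alpha|)/2}(1+|x|^2/(1+t))^{-(3+|\alpha|)/2}$ coming from Lemma~\ref{A.6} applied to $I_{1,1}$--$I_{1,4}$ and $I_4$ in (3.7)--(3.8), and the third group loses one order in the weight because of the extra factor $|\xi|^{-1}$ from $\nabla(-\Delta)^{-1}$, as recorded in (3.12). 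Matching these three shapes with the exponentially small middle/high-frequency contributions yields the three-part estimate of Theorem~\ref{l 3.5}.
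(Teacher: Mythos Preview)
Your proposal is correct and mirrors the paper's own argument: the paper proves Theorem~\ref{l 3.5} precisely by summing the low-frequency bounds of Lemma~\ref{l 3.3}, the exponentially decaying middle-frequency estimate $|D_x^\alpha G^m|\leq Ce^{-t/C}(1+|x|^2)^{-N}$, and the high-frequency remainder from Lemma~\ref{l 3.4}, then absorbing the exponential tails into the D-wave and H-wave profiles. One small correction: the Riesz-type bound for the velocity diagonal blocks ($I_{1,4}$ and $I_4$) comes from Corollary~\ref{A.8} (via Lemma~\ref{A.7}) rather than Lemma~\ref{A.6}, which handles the single Riesz factor $i\xi/|\xi|^2$ appearing in the third group.
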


\subsection{Space-time behavior for Green's function on $\rho-n$}

\indent\indent We have obtained the pointwise description for Green's function on the unknowns $\rho,m,n,\omega$, which shows that the space-time behaviors for all of the unknowns exhibit the Huygens' wave together with the diffusion wave. On the other hand, the nonlinear terms of the original system contain the electron field $\nabla\phi=\nabla(-\Delta)^{-1}(n-\rho)$, and the pointwise estimate of $\nabla\phi$ will be used to derive the pointwise estimate for the nonlinear problem. If one wants to derive the pointwise estimate of $\nabla\phi$ from the estimate for $\rho$ and $n$ in Theorem \ref{l 3.5}, the nonlocal operator $\nabla(-\Delta)^{-1}$ acting on the Huygens' waves of $\rho$ and $n$ will lead to an unsatisfactory pointwise description of $\nabla\phi$. This ultimately results that we cannot close the ansatz when dealing with the nonlinear coupling.

Thus, this section devotes to refining the pointwise estimate for the ingredients about $\rho-n$ in Green's function, which is absolutely not trivial. Let us reconsider $G_{11}-G_{31},G_{12}-G_{32},G_{13}-G_{33},G_{14}-G_{34}$. Note that the Huygens' wave is just from the low frequency part of Green's function. Thus, we only need to focus on the lower frequency part. From the representations of $G_{11}^l,G_{12}^l,G_{13}^l,G_{14}^l,G_{31}^l,G_{32}^l,G_{33}^l,G_{34}^l$ in Subsection 3.1, we know that each entry of them contains the wave operators $\mathbf{w}$ and $\mathbf{w}_t$ with the symbols $\frac{\sin (c|\xi|t)}{|\xi|}$ and $\cos(c|\xi|t)$, respectively. Generally, once there exist the wave operators $\mathbf{w}$ and $\mathbf{w}_t$ in the low frequency part, it will generate the Huygens' wave. However, as mentioned above, we have to show that the space-time description of $G_{11}-G_{31},G_{12}-G_{32},G_{13}-G_{33},G_{14}-G_{34}$ don't contain the Huygens' wave. To this end, the key observation is from the cancellation in the low frequency. In particular, from (\ref{3.2}) and (\ref{3.3}) for $\hat{\rho}^l(\xi,t)$ and $\hat{n}^l(\xi,t)$, we mainly concentrate on the components containing the wave operator, i.e., $P^{1,l}$ and $P^{2,l}$ given in (\ref{2.16}).
Then, we have
\begin{equation}\label{3.19}
\begin{array}{rl}
&P_{11}^{1,l}(\xi,t)-P_{31}^{1,l}(\xi,t)= \overline{P_{11}^{2,l}(\xi,t)-P_{31}^{2,l}}(\xi,t)=\mathcal{O}_1(|\xi|^2)+i\mathcal{O}_2(|\xi|^3),\\
&P_{13}^{1,l}(\xi,t)-P_{33}^{1,l}(\xi,t)= \overline{P_{13}^{2,l}(\xi,t)-P_{33}^{2,l}}(\xi,t)=\mathcal{O}_3(|\xi|^2)+i\mathcal{O}_4(|\xi|^3).
\end{array}
\end{equation}
Based on (\ref{3.19}) and the fact $\theta_1=\bar{\theta}_2$, we have
\begin{equation}\label{3.20}
\begin{array}{rl}
&\hat{J}_1(\xi,t)\triangleq P_{11}^1e^{\theta_1t}+P_{11}^2e^{\theta_2t}-P_{31}^1e^{\theta_1t}-P_{31}^2e^{\theta_2t}\\
=&e^{{\rm Re}(\theta_1)t}\Big\{({\rm Re}(P_{11}^1)+i{\rm Im}(P_{11}^1))(\cos({\rm Im}(\theta_1)t)+i\sin({\rm Im}(\theta_1)t))\\
&\ \ \ \ \ \ \ \ \ \ \ +({\rm Re}(P_{11}^1)-i{\rm Im}(P_{11}^1))(\cos({\rm Im}(\theta_1)t)-i\sin({\rm Im}(\theta_1)t))\\
&\ \ \ \ \ \ \ \ \ \ \ -({\rm Re}(P_{31}^1)+i{\rm Im}(P_{31}^1))(\cos({\rm Im}(\theta_1)t)+i\sin({\rm Im}(\theta_1)t))\\
&\ \ \ \ \ \ \ \ \ \ \ -
({\rm Re}(P_{31}^1)-i{\rm Im}(P_{31}^1))(\cos({\rm Im}(\theta_1)t)-i\sin({\rm Im}(\theta_1)t))\Big\}\\
=&2e^{{\rm Re}(\theta_1)t}\Big({\rm Re}(P_{11}^1-P_{31}^1)\cos({\rm Im}(\theta_1)t)-({\rm Im}(P_{11}^1-P_{31}^1))\sin({\rm Im}(\theta_1)t)\Big)\\
=&2\big(\sum\limits_{j=1}^\infty a_{2j}|\xi|^{2j}\big)\cos({\rm Im}(\theta_1)t)e^{{\rm Re}(\theta_1)t}
-2\big(\sum\limits_{k=2}^\infty b_{2k}|\xi|^{2k}\big)\frac{\sin({\rm Im}(\theta_1)t)}{|\xi|}e^{{\rm Re}(\theta_1)t}.
\end{array}
\end{equation}
Recalling ${\rm Im}(\theta_1)=\sum\limits_{j=1}^\infty \tilde{a}_{2j}|\xi|^{2j-1}$ and ${\rm Re}(\theta_1)=-\frac{\mu+\bar\mu}{4\bar\rho}|\xi|^2+\sum\limits_{j=2}^\infty \check{a}_{2j}|\xi|^{2j}$, then we can get the following key estimates by using the real analysis method as in \cite{ld} with some modifications.
We emphasize that $j\geq1$ and $k\geq2$ in (\ref{3.20}) are crucial for us to show that the space-time behavior of $J_1$ will only contain the diffusion wave.
\begin{lemma}\label{l 3.6}
For sufficiently small $|\xi|$ and any multi-indices $\alpha$ and $\beta$,
\begin{equation}\label{3.21}
\begin{array}{rl}
&\displaystyle\int_{\mathbb{R}^3}\big|D_{\xi}^{2\beta}\Big(\chi_1(\xi)\xi^\alpha \big(\sum\limits_{j=1}^\infty a_{2j}|\xi|^{2j}\big)\cos({\rm Im}(\theta_1)t)e^{{\rm Re}(\theta_1)t}\Big)\Big|d\xi\leq C(1+t)^{-\frac{|\alpha|+5-|\beta|}{2}}(1+t)^{|\beta|},\\
&\displaystyle\int_{\mathbb{R}^3}\big|D_{\xi}^{2\beta}\Big(\chi_1(\xi)\xi^\alpha \big(\sum\limits_{j=2}^\infty b_{2j}|\xi|^{2j}\big)\frac{\sin({\rm Im}(\theta_1)t)}{|\xi|}e^{{\rm Re}(\theta_1)t}\Big)\Big|d\xi\leq C(1+t)^{-\frac{|\alpha|+5-|\beta|}{2}}(1+t)^{|\beta|}.
\end{array}
\end{equation}
\end{lemma}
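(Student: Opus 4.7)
\textbf{Proof proposal for Lemma \ref{l 3.6}.}

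The plan is to treat both integrals by a single real-analysis scheme in the spirit of \cite{ld}, exploiting three structural ingredients of $\hat J_1$: the factor $e^{{\rm Re}(\theta_1)t}$ gives Gaussian decay in $\xi$ on scale $1/\sqrt t$; the index restrictions $j\ge 1$ and $k\ge 2$ force the amplitudes $\xi^\alpha\sum_{j\ge 1} a_{2j}|\xi|^{2j}$ and $\xi^\alpha\sum_{k\ge 2}b_{2k}|\xi|^{2k}/|\xi|$ to vanish at $\xi=0$ to order at least $|\alpha|+2$ and $|\alpha|+3$ respectively; and both ${\rm Re}(\theta_1)$ and ${\rm Im}(\theta_1)$ are, up to one explicit odd power of $|\xi|$, real-analytic in $|\xi|^2$ on the low-frequency region, so that $\xi$-derivatives of the phase and Gaussian cost only controlled powers of $t$.

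First I would rescale $\eta=\sqrt t\,\xi$, so that $d\xi=t^{-3/2}d\eta$, $D_\xi^{2\beta}=t^{|\beta|}D_\eta^{2\beta}$, and $e^{{\rm Re}(\theta_1)t}=e^{-\frac{\mu+\bar\mu}{4\bar\rho}|\eta|^2+O(|\eta|^4/t)}$ is essentially Gaussian on the rescaled low-frequency region $|\eta|\le 2\varepsilon_1\sqrt t$, while the oscillation becomes $\cos(\sqrt t\,|\eta|f(|\eta|^2/t))$ with $f$ analytic and $f(0)=\sqrt{(c_1^2+c_2^2)/2}$. The amplitude rewrites as $t^{-|\alpha|/2-1}\eta^\alpha\tilde A(|\eta|^2/t)$ with $\tilde A$ entire and $\tilde A(0)=a_2$. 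Next, I would expand $D_\eta^{2\beta}$ by Leibniz across the four factors $\chi_1(\eta/\sqrt t)\cdot$ amplitude $\cdot\cos({\rm Im}(\theta_1)t)\cdot e^{{\rm Re}(\theta_1)t}$. Terms in which a derivative lands on $\chi_1$ are supported where $|\xi|\ge\varepsilon_1$ and are absorbed by $e^{{\rm Re}(\theta_1)t}\le e^{-c\varepsilon_1^2 t}$; derivatives on the polynomial amplitude only reduce its degree and add no $t$-growth; derivatives on the Gaussian bring down $O(|\eta|+|\eta|^3/t)$, absorbed by $e^{-c|\eta|^2}$. The only $t$-growing factors come from $\eta$-derivatives of the cosine, each contributing at most $\sqrt t$ times either a bounded function of $\eta$ or, for higher-order derivatives hitting the same phase, a mildly singular $|\eta|^{1-\ell}(\sqrt t)^{\ell-1}$ coming from $\partial_\xi^\ell{\rm Im}(\theta_1)\sim|\xi|^{1-\ell}$.

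The crux is that these near-origin singularities are absorbed by the vanishing of $\eta^\alpha\tilde A(|\eta|^2/t)$ to order $|\alpha|+2$, which is precisely what the restriction $j\ge 1$ enforces (and what $k\ge 2$ enforces in the second integrand, after the factor $\sin({\rm Im}(\theta_1)t)/|\xi|$ is treated as a smooth function of $|\xi|^2$ bounded by $Ct$). Assembling the surviving contributions: the outer scaling gives $t^{|\beta|-3/2}$, the amplitude prefactor gives $t^{-|\alpha|/2-1}$, the Leibniz tally of $t$-factors from differentiating the oscillation against the Gaussian (after the cancellations above) gives at most $t^{3|\beta|/2}$, and the remaining $\eta$-integral against $|\eta|^{|\alpha|+2}e^{-c|\eta|^2}$ is a uniform constant. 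Multiplying these factors yields exactly $(1+t)^{-(|\alpha|+5-|\beta|)/2}(1+t)^{|\beta|}$, as asserted; the sine-over-$|\xi|$ integral is handled in the same way, with the extra $|\xi|^2$ vanishing from $k\ge 2$ balancing the additional $t$ produced by $\sin/|\xi|$.

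The main obstacle is step three: the term-by-term bookkeeping needed to verify that, in every Leibniz expansion where several $\eta$-derivatives land on the phase, the resulting singular combinations $|\eta|^{1-\ell}(\sqrt t)^{\ell-1}$ are matched against the prescribed vanishing of the amplitude at the origin. This is the "subtle cancellation in the low frequency" emphasized in the introduction, and it is exactly what prevents a surviving Huygens' wave from appearing in the eventual space-time estimate of $G_{11}-G_{31}$, $G_{13}-G_{33}$ and their companions.
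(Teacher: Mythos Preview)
There is a genuine gap in your $t$-bookkeeping, and the scheme you outline is not the one the paper uses. After the rescaling $\eta=\sqrt t\,\xi$ the phase is $c_0\sqrt t\,|\eta|+O(|\eta|^3/\sqrt t)$, so that $\partial_\eta^\ell(\text{phase})\sim \sqrt t\,|\eta|^{1-\ell}$ carries only \emph{one} factor $\sqrt t$ regardless of $\ell$ (your assertion ``$|\eta|^{1-\ell}(\sqrt t)^{\ell-1}$'' is incorrect). By Fa\`a di Bruno, the worst-in-$t$ contribution to $D_\eta^{2\beta}\cos(\text{phase})$ is the term with $2|\beta|$ first-order phase factors, of size $(\sqrt t)^{2|\beta|}=t^{|\beta|}$ and carrying only the bounded factor $(\eta/|\eta|)^{2|\beta|}$, hence no $\eta$-singularity at all. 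Multiplying this by your outer factor $t^{|\beta|-3/2}$ and the amplitude prefactor $t^{-|\alpha|/2-1}$ gives $t^{2|\beta|-(|\alpha|+5)/2}$, which exceeds the Lemma's right-hand side $t^{3|\beta|/2-(|\alpha|+5)/2}$ by $t^{|\beta|/2}$. (Your three displayed factors $t^{|\beta|-3/2}$, $t^{-|\alpha|/2-1}$, $t^{3|\beta|/2}$ in fact multiply to $t^{5|\beta|/2-(|\alpha|+5)/2}$, so the final arithmetic is inconsistent as written.) The amplitude vanishing $j\ge1$, $k\ge2$ that you emphasize controls the integrability of the \emph{singular} Fa\`a di Bruno terms near $\eta=0$; it has no effect on the worst-in-$t$ term just described, where there is nothing singular to absorb.

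The paper removes this obstruction by a different device: rather than rescaling, it Taylor-expands the oscillation in its argument to order about $|\beta|$, writing for instance
\[
\sin\big({\rm Im}(\theta_1)t\big)=\sum_{1\le k\le(|\beta|+1)/2}\Big(\sum_{j\ge k}A_{j,k}|\xi|^{2j-1}\Big)t^{2k-1}+\frac{1}{|\beta|!}F\big({\rm Im}(\theta_1)t\big),
\]
with the integral remainder $F(x)=\int_0^x\sin^{(|\beta|+1)}(s)(x-s)^{|\beta|}ds$. In each polynomial-in-$t$ term the oscillation has been eliminated, so $\xi$-derivatives land only on powers of $|\xi|$ and on the Gaussian $e^{{\rm Re}(\theta_1)t}$ and produce no uncontrolled $t$-growth; the remainder is handled via the pointwise bound $|D_\xi^{\beta_2}F|\le C|\xi|^{|\beta|+1-|\beta_2|}t^{|\beta|+1}$, which encodes precisely the trade-off between $t$-growth and extra $|\xi|$-vanishing that your direct-Leibniz scheme lacks. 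These ingredients, together with the facts recorded in (\ref{3.24}), are the substance of the proof.
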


\begin{proof}The proof is based on the chain rule and Taylor's theorem, so it is direct but tediously long. We shall only present the sketch  of the proof for the estimate $(\ref{3.21})_2$. Firstly,
\begin{equation}\label{3.22}
\begin{array}{rl}
\sin({\rm Im}(\theta_1)t)=\sum\limits_{j=1}^\infty \tilde{d}_{2j-1}|\xi|^{2j-1}t-\frac{1}{3!}\Big(\sum\limits_{j=1}^\infty \tilde{d}_{2j-1}|\xi|^{2j-1}\Big)^3t^3+\cdots+\frac{1}{|\beta|!}F({\rm Im}(\theta_1)t)\\[2mm]
=\sum\limits_{1\leq k\leq\frac{|\beta|+1}{2}}\Big(\sum\limits_{j=k}^\infty A_{j,k}|\xi|^{2j-1}\Big)t^{2k-1}+\frac{1}{|\beta|!}F({\rm Im}(\theta_1)t),
\end{array}
\end{equation}
where $F(x)=\int_0^x \sin^{|\beta|+1}(s)(x-s)^{|\beta|}ds$. Thus,
\begin{equation}\label{3.23}
\begin{array}{rl}
\sum\limits_{j=l}^\infty b_{2j}|\xi|^{2j}\frac{\sin({\rm Im}(\theta_1)t)}{|\xi|}
=&\!\!\!\!\sum\limits_{1\leq k\leq\frac{|\beta|+1}{2}}\!\!\big(\sum\limits_{j=k+l-1}^\infty B_{j,k}|\xi|^{2j}\big)t^{2k-1}+\frac{1}{|\beta|!}\Big(\sum\limits_{j=l}^\infty b_{2j}|\xi|^{2j}\Big)F({\rm Im}(\theta_1)t).
\end{array}
\end{equation}
For sufficiently small $|\xi|$, we have the following facts:
\begin{equation}\label{3.24}
\begin{array}{rl}
&\displaystyle|D_\xi^\beta e^{{\rm Re}(\theta_1)t}|\leq C\bigg(\sum\limits_{1\leq k\leq \frac{|\beta|}{2}}t^k+\sum\limits_{\frac{|\beta|}{2}\leq k\leq|\beta|}|\xi|^{2k-|\beta|}t^k\bigg)|e^{{\rm Re}(\theta_1)t}|,\\[2mm]
&\displaystyle\int_{|\xi|\ {\rm small}}|\xi|^me^{{\rm Re}(\theta_1)t}d\xi\leq C(1+t)^{-\frac{m+3}{2}},\ {\rm for}\ m>-3,\\[4mm]
&\displaystyle|D_\xi^{\beta_2}F({\rm Im}(\theta_1)t)|\leq C|\xi|^{|\beta|+1-|\beta_2|}t^{|\beta|+1},\ {\rm for}\ |\beta_2|\leq |\beta|.
\end{array}
\end{equation}
Combining these facts and (\ref{3.23}), one can easily obtain the estimate $(\ref{3.21})_2$ by using the chain rule. This complete the proof of Lemma \ref{l 3.6}.
\end{proof}

In virtue of Lemma \ref{l 3.6} and Lemma \ref{A.2}, one has that
\begin{equation}\label{3.26}
|D_x^\alpha(\chi_1(D)J_1(x,t))|\leq C(1+t)^{-\frac{3+|\alpha|}{2}}\big(1+\frac{|x|^2}{1+t}\big)^{-2}.
\end{equation}

Next, we consider the second column and the forth column in the matrices $P^1(\xi,t)=\overline{P^2(\xi,t)}$ in (\ref{2.16}). We need a further expansion to get the following when $c_1^2=c_2^2$:
\begin{equation}\label{3.24(0)}
\begin{array}{rl}
&P_{12}^{1,l}(\xi,t)-P_{32}^{1,l}(\xi,t)= \overline{P_{12}^{2,l}(\xi,t)-P_{32}^{2,l}}(\xi,t)=\mathcal{O}_5(|\xi|^3)+i\mathcal{O}_6(|\xi|^4),\ {\rm when}\ c_1^2=c_2^2,\\
&P_{14}^{1,l}(\xi,t)-P_{34}^{1,l}(\xi,t)= \overline{P_{14}^{2,l}(\xi,t)-P_{34}^{2,l}}(\xi,t)=\mathcal{O}_7(|\xi|^3)+i\mathcal{O}_8(|\xi|^4),\ {\rm when}\ c_1^2=c_2^2.
\end{array}
\end{equation}
In fact, $P_{32}^{1,l}(\xi,t)=\frac{-\bar\rho|\xi|}{i4\bar\rho|\xi|\sqrt{\frac{c_1^2+c_2^2}{2}}+\cdots}$, and
\begin{equation}\label{3.24(1)}
\begin{array}{rl}
P_{12}^{1,l}(\xi,t)=&\bigg\{\bar\rho|\xi|+c_1^2|\xi|^3-\frac{\mu^2}{\bar\rho^2}|\xi|^4+\frac{\mu}{\bar\rho}|\xi|^3(i\sqrt{\frac{c_1^2+c_2^2}{2}}|\xi|+\frac{3(\mu+\bar\mu)}{4\bar\rho}|\xi|^2+\cdots)\\
&\ \ -|\xi|(2\bar\rho+\frac{c_1^2+c_2^2}{2}|\xi|^2+i\frac{\mu+\bar\mu}{2\bar\rho}\sqrt{\frac{c_1^2+c_2^2}{2}}|\xi|^3+\cdots)\bigg\}\times \frac{1}{i4\bar\rho|\xi|\sqrt{\frac{c_1^2+c_2^2}{2}}+\cdots}\\
=&\displaystyle\frac{-\bar\rho|\xi|+\frac{c_1^2-c_2^2}{2}|\xi|^3+i|\xi|^4\frac{\mu-\bar\mu}{\bar\rho}\sqrt{\frac{c_1^2+c_2^2}{2}}}{i4\bar\rho|\xi|\sqrt{\frac{c_1^2+c_2^2}{2}}+\cdots}.
\end{array}
\end{equation}
Then when $c_1^2=c_2^2$, one can easily get $(\ref{3.24(0)})_1$.
Based on $(\ref{3.24(0)})_1$, a similar process as $J_1$ suffices to give for $\hat{J}_2(\xi,t)\triangleq\hat{P}_{12}^1e^{\theta_1t}+\hat{P}_{12}^2e^{\theta_2t}-\hat{P}_{32}^1e^{\theta_1t}-\hat{P}_{32}^2e^{\theta_2t}$ that
\begin{equation}\label{3.28}
|D_x^\alpha(\chi_1(D)J_2(x,t))|\leq C(1+t)^{-\frac{4+|\alpha|}{2}}\big(1+\frac{|x|^2}{1+t}\big)^{-2}.
\end{equation}
The forth column in (\ref{2.16}) have the same estimate as (\ref{3.28}). Finally, the other two terms on the eigenvalues $\theta_1$ and $\theta_2$ containing the wave operators in $G_{11}^l-G_{31}^l,G_{12}^l-G_{32}^l,G_{13}^l-G_{33}^l,G_{14}^l-G_{34}^l$ can be treated similarly. Thus, we have
\begin{theorem}\label{l 3.7} There exists a constant $C>0$ such that it holds that
\begin{equation}\label{3.29}
\begin{array}{rl}
&|D_x^\alpha(G_{11}-G_{31}-G_S,G_{13}-G_{33}-G_S)|
\leq C(1+t)^{-\frac{3+|\alpha|}{2}}\big(1+\frac{|x|^2}{1+t}\big)^{-2},\\
&|D_x^\alpha(G_{12}-G_{32}-G_S,G_{14}-G_{34}-G_S)|
\leq C(1+t)^{-\frac{4+|\alpha|}{2}}\big(1+\frac{|x|^2}{1+t}\big)^{-2},\ {\rm when}\ c_1^2=c_2^2,\\
&|D_x^\alpha(G_{12}-G_{32}-G_S,G_{14}-G_{34}-G_S)|
\leq C(1+t)^{-\frac{3+|\alpha|}{2}}\big(1+\frac{|x|^2}{1+t}\big)^{-2},\ {\rm when}\ c_1^2\neq c_2^2,
\end{array}
\end{equation}
where $G_S$ is the singular part from the high frequency part of Green's function in Theorem \ref{l 3.5}.
\end{theorem}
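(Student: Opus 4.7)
My plan is to follow the same three-frequency decomposition used throughout Section~3 and reduce the theorem to a low-frequency statement, where the real work occurs. For the high frequency, Lemma~\ref{l 3.4} already isolates the singular part $G_S$ and shows that $\chi_3(D)G-G_S$ is exponentially decaying and spatially Schwartz; since $G_S$ appears in the same way in every entry $G_{ij}$ that I subtract, it cancels if I wish, or is absorbed into the $G_S$ on the left-hand side of (\ref{3.29}). The middle-frequency contribution is exponentially small by the paragraph preceding Theorem~\ref{l 3.5}. Thus it suffices to prove that the low-frequency parts
\[
J_k(x,t)\ \triangleq\ \mathcal{F}^{-1}\bigl(\chi_1(\xi)(\hat G^l_{1k}-\hat G^l_{3k})\bigr),\qquad k=1,2,3,4,
\]
satisfy the claimed pure D-wave bounds.

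Next, I split each $J_k$ according to the four eigenvalues. The plasma branch contributions, coming from $e^{\theta_3 t}$ and $e^{\theta_4 t}$, are easy: from Lemma~\ref{l 2.1}, $\mathrm{Im}(\theta_{3,4})=\pm\sqrt{2\bar\rho}+O(|\xi|^2)$ is essentially $\xi$-independent at leading order, so the combination $e^{\theta_3 t}+e^{\theta_4 t}$ is $2\cos(\mathrm{Im}(\theta_3)t)\,e^{\mathrm{Re}(\theta_3)t}$, a bounded-in-$t$ oscillation multiplied by a Gaussian-type factor $e^{-c|\xi|^2 t}$; no $\cos(c|\xi|t)$ propagating wave is generated, and applying Lemma~\ref{A.2} (as in the estimate of $I_4$ in Section~3.2) produces at worst the D-wave $(1+t)^{-(3+|\alpha|)/2}(1+\tfrac{|x|^2}{1+t})^{-N}$, which is dominated by the right-hand side of (\ref{3.29}).

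The core of the proof is the acoustic branch, from $e^{\theta_1 t}+e^{\theta_2 t}$, which does contain the wave operators $\mathbf{w},\mathbf{w}_t$ and would generically generate an H-wave. Here I exploit the two cancellation identities (\ref{3.19}) and (\ref{3.24(0)}): for $k=1,3$, the differences $P^{1,l}_{1k}-P^{1,l}_{3k}$ are $\mathcal{O}(|\xi|^2)+i\mathcal{O}(|\xi|^3)$, and for $k=2,4$, under the hypothesis $c_1^2=c_2^2$, the sharper cancellation from the computation (\ref{3.24(1)}) upgrades this to $\mathcal{O}(|\xi|^3)+i\mathcal{O}(|\xi|^4)$. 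Combined with $\theta_1=\bar\theta_2$, I repeat the exact calculation (\ref{3.20}) performed for $J_1$: the imaginary parts cancel, and the acoustic piece reduces to a linear combination of
\[
\Bigl(\sum_{j\ge j_0}a_{2j}|\xi|^{2j}\Bigr)\cos(\mathrm{Im}(\theta_1)t)\,e^{\mathrm{Re}(\theta_1)t}\ \ \text{and}\ \ \Bigl(\sum_{k\ge j_0+1}b_{2k}|\xi|^{2k}\Bigr)\frac{\sin(\mathrm{Im}(\theta_1)t)}{|\xi|}\,e^{\mathrm{Re}(\theta_1)t},
\]
with $j_0=1$ for $k=1,3$ and $j_0=2$ for $k=2,4$ when $c_1^2=c_2^2$ (or $j_0=1$ when $c_1^2\neq c_2^2$). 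Lemma~\ref{l 3.6} then bounds the weighted Fourier integrals by $(1+t)^{-(|\alpha|+2j_0+3)/2}(1+t)^{|\beta|}$; feeding this into the standard real-analysis Lemma~\ref{A.2} yields the desired pointwise D-wave with exponent $(3+|\alpha|)/2$ when $j_0=1$ and $(4+|\alpha|)/2$ when $j_0=2$, matching the three cases in (\ref{3.29}) exactly.

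The main obstacle, in my view, is already essentially addressed by Lemma~\ref{l 3.6}: one must verify that after the cancellation the leading series really starts at $|\xi|^{2j_0}$ (and not at a lower even power hidden in higher-order corrections of $P^{j,l}$ that were truncated in (\ref{2.16})), and that every surviving Taylor coefficient of the spectrum enters in a way compatible with the chain-rule bound (\ref{3.24}) used inside Lemma~\ref{l 3.6}. Once that bookkeeping is done for $J_1$ and $J_2$ as in (\ref{3.26}) and (\ref{3.28}), the remaining entries $J_3, J_4$ and the two other acoustic terms involving $\theta_1,\theta_2$ in $G^l_{1k}-G^l_{3k}$ are handled by the same recipe, and assembling the low, middle, and high-frequency pieces yields Theorem~\ref{l 3.7}.
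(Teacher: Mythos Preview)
Your proposal is correct and follows essentially the same approach as the paper: frequency decomposition, reduction to the low-frequency acoustic branch via the cancellation identities (\ref{3.19}) and (\ref{3.24(0)}), the computation (\ref{3.20}), and application of Lemma~\ref{l 3.6} together with Lemma~\ref{A.2}. Your treatment is, if anything, slightly more explicit than the paper's about why the plasma branch ($\theta_3,\theta_4$) produces no Huygens' wave, and your parametrization via $j_0$ cleanly unifies the three cases of (\ref{3.29}); note only that the bound in Lemma~\ref{l 3.6} carries the exponent $-(|\alpha|+5-|\beta|)/2$ rather than $-(|\alpha|+2j_0+3)/2$, though this does not affect the final pointwise estimate after interpolation.
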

We emphasize again that the above estimate can help us verify that the space-time behavior of the difference of two densities does not contain the Huygens' wave, which is the key to derive the pointwise estimate of the electric field $\nabla\phi$ and ultimately close the ansatz in the next section.

\section{Pointwise estimate for nonlinear problem}
\quad\quad In this section, we consider the nonlinear problem. First of all, by using Duhamel's principle, we can get the representation of the solution $(\rho,m,n,\omega)$ for the nonlinear problem (\ref{2.1})-(\ref{2.2}):
\begin{equation}\label{4.1}
 D_x^\alpha\left(\!
            \begin{array}{c}
              \rho \\
              m \\
              n\\
              \omega
            \end{array}
          \!\right)\!=D_x^\alpha G\ast_x\!\left(
            \begin{array}{ccc}
              \rho_{0} \\
              m_{0}\\
              n_0\\
              \omega_{0}
            \end{array}
          \right)+\int_0^t\!D_x^\alpha G(\cdot,t\!-\!s)\!\ast_x\!\left(\!
                                            \begin{array}{ccc}
                                              0 \\
                                              F_1 \\
                                              0 \\
                                              F_2
                                            \end{array}
                                          \!\right)\!(\cdot,s)ds,
\end{equation}
where the nonlinear terms $F_1,F_2$ are defined in (\ref{2.2}).

\textbf{Initial propagation.}\ Theorem \ref{l 3.5} and the initial condition (\ref{1.3}) yield the initial propagation as follows:
\begin{equation}\label{4.2}
\begin{array}{rl}
&\left|D_x^\alpha G\ast_x\!\!\left(\!
            \begin{array}{ccc}
              \rho_{0} \\
              m_{0}\\
              n_0\\
              \omega_{0}
            \end{array}
          \!\right)\right|
          \leq\!  \left|D_x^\alpha(G-G_S)
                           \!\ast_x\!\left(\!\!
            \begin{array}{ccc}
              \rho_{0} \\
              m_{0}\\
              n_0\\
              \omega_{0}
            \end{array}
          \!\right)\right|
          +\left| G_S\ast_x\!D_x^\alpha \left(\!
            \begin{array}{ccc}
              \rho_{0} \\
              m_{0}\\
              n_{0} \\
              \omega_0
            \end{array}
          \!\right)\right|\\
          \leq &\displaystyle C\epsilon\Big((1+t)^{-\frac{3+|\alpha|}{2}}\Big(1+\frac{|x|^2}{1+t}\Big)^{-\frac{3}{2}}
          +(1+t)^{-\frac{4+|\alpha|}{2}}\Big(1+\frac{(|x|-ct)^2}{1+t}\Big)^{-\frac{3}{2}}\Big),\ \ |\alpha|\leq2.
\end{array}
\end{equation}
In particular, the initial propagation relies on the assumption (\ref{1.3}) for the initial data, the convolution estimate in Lemma \ref{A.4} for the initial propagation and the fact that the singular part $G_S$ is like a Dirac $\delta$-function. We only deduce the estimate of the linear part of $m$ here.

For brevity, denote the linear part of $m$ in (\ref{4.1}) by $\tilde{m}$. Then, recall that $\tilde{m}=G_{21}\ast_x\rho_0+G_{22}\ast_xm_0+G_{23}\ast_xn_0+G_{24}\ast_x\omega$, and the pointwise estimates of $G_{21}$ and $G_{23}$ in Theorem \ref{l 3.4} contain $(1+t)^{-\frac{2+|\alpha|}{2}}\Big(1+\frac{|x|^2}{1+t}\Big)^{-\frac{2+|\alpha|}{2}}$, which has the lowest decay rate on both $x$ and $t$ and is arising from $\frac{\xi}{|\xi|^2}(e^{\theta_3t}-e^{\theta_4t})$ in $G_{21}^l(\xi,t)$ and $G_{23}^l(\xi,t)$; see (\ref{3.4}). To improve the estimate for $\tilde{m}$ such that the pointwise space-time behavior of $\tilde{m}$ satisfies (\ref{4.2}), we need a new observation from (\ref{3.4}) that the leading terms $\frac{\xi}{|\xi|^2}(e^{\theta_3t}-e^{\theta_4t})$ of $G_{21}^l(\xi,t)$ and $G_{23}^l(\xi,t)$ have the opposite sign. Thus, we use the relation $\mathcal{F}\big(\frac{\xi}{|\xi|^2}(e^{\theta_3t}-e^{\theta_4t})\big)\ast_x(\rho_0-n_0)=-\mathcal{F}\big(\frac{\xi\xi^T}{|\xi|^2}(e^{\theta_3t}-e^{\theta_4t})\big)\ast_x\nabla\phi_0$ for the leading terms in $G_{21}^l(\xi,t)$ and $G_{23}^l(\xi,t)$, which together with the initial pointwise assumption on $\nabla\phi_0$ suffices to conclude (\ref{4.2}).

\vspace{3mm}
\textbf{Nonlinear coupling.}\ According to the initial propagation, we give the following ansatz for the nonlinear problem with $|\alpha|\leq 2$:
\begin{equation}\label{4.3}
|D_x^\alpha(\rho,m,n,\omega)|\leq 2C\epsilon\Big((1+t)^{-\frac{3}{2}}\big(1+\frac{|x|^2}{1+t}\big)^{-\frac{3}{2}}
          +(1+t)^{-2}\big(1+\frac{(|x|-ct)^2}{1+t}\big)^{-\frac{3}{2}}\Big).
\end{equation}
Here we need the pointwise ansatz for the solution and its second derivatives due to the quasi-linearity of the system together with the singularity in the high frequency part of the Green's function. Additionally, the $H^6$-regularity assumption on the initial data is also because we will meet the $L^\infty$-norm of the forth derivatives of the unknowns when dealing with the convolution between the singular part of the Green's function and the nonlinear terms.

The main steps include the following.

$\bf{Step1}$. Obtain the space-time information of the electric field $\nabla\phi$. This is the key step since its estimate can only be derived by the relation $\nabla\phi=\nabla(-\Delta)^{-1}(n-\rho)$. We shall first prove that the space-time behavior of $\rho-n$ only contains D-wave by using the ansatz (\ref{4.3}), and then obtain  the space-time information of the electric field field $\nabla\phi$ by using the relation $\nabla\phi=\nabla(-\Delta)^{-1}(n-\rho)$.

In particular, we have
\begin{equation}\label{4.4}
\begin{array}{rl}
\rho-n=&\displaystyle(G_{11}-G_{31}-G_S)\ast_x\rho_0+(G_{12}-G_{32}-G_S)\ast_xm_0\\[2mm]
&\displaystyle+(G_{13}-G_{33}-G_S)\ast_xn_0+(G_{14}-G_{34}-G_S)\ast_x\omega_0\\[2mm]
&\displaystyle+G_S\ast_x(\rho_0+m_0+n_0+\omega_0)\\[2mm]
&\displaystyle+\int_0^t(G_{12}-G_{32}-G_S)\ast_x F_1ds+\int_0^t(G_{14}-G_{34}-G_S)\ast_x F_2ds\\[2mm]
&\displaystyle+\int_0^tG_S\ast_x F_1ds+\int_0^tG_S\ast_x F_2ds.
\end{array}
\end{equation}
We claim that
\begin{equation}\label{4.5}
|\rho-n|\leq C(1+t)^{-2}\big(1+\frac{|x|^2}{1+t}\big)^{-r},\ \ \frac{3}{2}<r\leq2,
\end{equation}
where $r>\frac{3}{2}$ is crucial. As stated in Wu-Wang \cite{wu4}, if (\ref{4.5}) holds, one can immediately obtain that
\begin{equation}\label{4.6}
|\nabla\phi|\leq C(1+t)^{-\frac{3}{2}}\big(1+\frac{|x|^2}{1+t}\big)^{-1}.
\end{equation}

We emphasize that the estimate (\ref{4.5}) is the most important one in dealing with the nonlinear problem (\ref{1.1})-(\ref{1.2}), which is the same as the special case considered in \cite{wu4}. Without (\ref{4.5}), one cannot close the ansatz for the nonlinear problem. The initial propagation for $\rho-n$ is easy, thus we consider the nonlinear coupling in (\ref{4.4}) in the following.

If we directly use (\ref{3.28}), it is easy to see that one can only get $|(G_{11}-G_{31}-G_S)\ast_x\rho_0|+|(G_{13}-G_{33}-G_S)\ast_xn_0|\leq C(1+t)^{-\frac{3}{2}}\big(1+\frac{|x|^2}{1+t}\big)^{-r}$ and $|(G_{12}-G_{32}-G_S)\ast_xm_0|+|(G_{14}-G_{34}-G_S)\ast_x\omega_0|\leq C(1+t)^{-2}\big(1+\frac{|x|^2}{1+t}\big)^{-r}$. A natural idea is to take the following reformulation to expect additional decay rate from $G_{11}^l-G_{31}^l+G_{13}^l-G_{33}^l$:
\begin{equation}\label{4.7}
\begin{array}{rl}
&(G_{11}^l-G_{31}^l)\ast_x\rho_0+(G_{13}^l-G_{33}^l)\ast_xn_0\\
=&(G_{11}^l-G_{31}^l)\ast_x(\rho_0-n_0)+(G_{11}^l-G_{31}^l+G_{13}^l-G_{33}^l)\ast_xn_0:=J_3+J_4.
\end{array}
\end{equation}
In fact, the decay estimate of $J_3$ can be improved as follows:
\begin{equation}\label{4.8}
\begin{array}{rl}
&|J_3|=|(G_{11}^l-G_{31}^l)\ast_x(\rho_0-n_0)|=|\nabla(G_{11}^l-G_{31}^l)\ast_x\nabla\phi_0|\leq C(1+t)^{-2}\big(1+\frac{|x|^2}{1+t}\big)^{-r},
\end{array}
\end{equation}
under the initial condition $|\nabla\phi_0|\leq C(1+|x|^2)^{-r}$ with $r>\frac{3}{2}$. This is the same as the special case $\mu=\bar\mu$ in \cite{wu4}. For $J_4$, we need a further expansion
\begin{equation*}\label{4.9}
\begin{array}{rl}
P_{11}^{1,l}&\!\!=\!\!\bigg\{(-\bar\rho-c_1^2|\xi|^2)\Big(i\sqrt{\frac{c_1^2\!+\!c_2^2}{2}}|\xi|
\!+\!\frac{3(\mu+\bar\mu)}{4\bar\rho}|\xi|^2\!-\!i\frac{(c_1^2-c_2^2)^2}{16\bar\rho\sqrt{\frac{c_1^2+c_2^2}{2}}}|\xi|^3\!-\!i\frac{(\mu+\bar\mu)^2}{32\bar\rho^2\sqrt{\frac{c_1^2+c_2^2}{2}}}|\xi|^3\\[2mm]
&\ \ \ \ \ \ \ \ \!+\!\frac{(\mu-\bar\mu)(c_1^2-c_2^2)}{8\bar\rho^2}|\xi|^4+\cdots\big)+\mu|\xi|^2+\frac{\mu}{\bar\rho}c_1^2|\xi|^4+i\sqrt{\frac{c_1^2+c_2^2}{2}}|\xi|(2\bar\rho+\frac{c_1^2+c_2^2}{2}|\xi|^2+\cdots\Big)\\[2mm]
&\ \ \ \ \ \ \ \ \ \ \ \ \!+\!\frac{\mu+\bar\mu}{2}|\xi|^2+\frac{(\mu+\bar\mu)(c_1^2+c_2^2)-(\mu-\bar\mu)(c_1^2-c_2^2)}{4\bar\rho}|\xi|^4+\cdots\bigg\}\times \frac{1}{i4\bar\rho|\xi|\sqrt{\frac{c_1^2+c_2^2}{2}}+\cdots}\\
=&\!i\sqrt{\frac{c_1^2\!+\!c_2^2}{2}}\bar\rho|\xi|\!+\!\frac{3\mu-\bar\mu}{4}|\xi|^2\!-\!ic_1^2\sqrt{\frac{c_1^2\!+\!c_2^2}{2}}|\xi|^3\!+\!i\frac{(c_1^2-c_2^2)^2}{16\sqrt{\frac{c_1^2+c_2^2}{2}}}|\xi|^3+i\frac{(\mu+\bar\mu)^2}{32\bar\rho\sqrt{\frac{c_1^2+c_2^2}{2}}}|\xi|^3+i\frac{c_1^2+c_2^2}{2}\sqrt{\frac{c_1^2+c_2^2}{2}}|\xi|^3\\[2mm]
&\!+\!\frac{\mu}{\bar\rho}c_1^2|\xi|^4\!-\!\frac{3(\mu+\bar\mu)c_1^2}{4\bar\rho}|\xi|^4\!-\!\frac{(\mu-\bar\mu)(c_1^2\!-\!c_2^2)}{8\bar\rho}|\xi|^4\!+\!\frac{(\mu+\bar\mu)(c_1^2+c_2^2)-(\mu-\bar\mu)(c_1^2-c_2^2)}{4\bar\rho}|\xi|^4+\cdots \bigg\}\!\times\!\frac{1}{i4\bar\rho|\xi|\sqrt{\frac{c_1^2+c_2^2}{2}}\!+\!\cdots},
\end{array}
\end{equation*}
\begin{equation*}\label{4.9}
\begin{array}{rl}
P_{33}^{1,l}&\!\!\!=\!\bigg\{(-\bar\rho-c_2^2|\xi|^2)\Big(i\sqrt{\frac{c_1^2\!+\!c_2^2}{2}}|\xi|
\!+\!\frac{3(\mu+\bar\mu)}{4\bar\rho}|\xi|^2\!-\!i\frac{(c_1^2-c_2^2)^2}{16\bar\rho\sqrt{\frac{c_1^2+c_2^2}{2}}}|\xi|^3\!-\!i\frac{(\mu+\bar\mu)^2}{32\bar\rho^2\sqrt{\frac{c_1^2+c_2^2}{2}}}|\xi|^3\\[2mm]
&\!+\!\frac{(\mu-\bar\mu)(c_1^2-c_2^2)}{8\bar\rho^2}|\xi|^4+\cdots\big)+\bar\mu|\xi|^2+\frac{\bar\mu}{\bar\rho}c_2^2|\xi|^4+i\sqrt{\frac{c_1^2+c_2^2}{2}}|\xi|(2\bar\rho+\frac{c_1^2+c_2^2}{2}|\xi|^2+\cdots\Big)\\[2mm]
&\!+\!\frac{\mu+\bar\mu}{2}|\xi|^2+\frac{(\mu+\bar\mu)(c_1^2+c_2^2)-(\mu-\bar\mu)(c_1^2-c_2^2)}{4\bar\rho}|\xi|^4+\cdots\bigg\}\times \frac{1}{i4\bar\rho|\xi|\sqrt{\frac{c_1^2+c_2^2}{2}}+\cdots}\\
=\!&\!i\sqrt{\frac{c_1^2+c_2^2}{2}}\bar\rho|\xi|\!+\!\frac{3\bar\mu-\mu}{4}|\xi|^2\!-\!ic_2^2\sqrt{\frac{c_1^2\!+\!c_2^2}{2}}|\xi|^3+i\frac{(c_1^2\!-\!c_2^2)^2}{16\sqrt{\frac{c_1^2+c_2^2}{2}}}|\xi|^3+i\frac{(\mu+\bar\mu)^2}{32\bar\rho\sqrt{\frac{c_1^2+c_2^2}{2}}}|\xi|^3+i\frac{c_1^2+c_2^2}{2}\sqrt{\frac{c_1^2+c_2^2}{2}}|\xi|^3\\[2mm]
&\!+\!\frac{\bar\mu}{\bar\rho}c_2^2|\xi|^4\!-\!\frac{3(\mu\!+\!\bar\mu)c_2^2}{4\bar\rho}|\xi|^4\!-\!\frac{(\mu\!-\!\bar\mu)(c_1^2\!-\!c_2^2)}{8\bar\rho}|\xi|^4\!+\!\frac{(\mu+\bar\mu)(c_1^2+c_2^2)-(\mu-\bar\mu)(c_1^2-c_2^2)}{4\bar\rho}|\xi|^4+\cdots \bigg\}\!\times\!\frac{1}{i4\bar\rho|\xi|\sqrt{\frac{c_1^2+c_2^2}{2}}\!+\!\cdots},
\end{array}
\end{equation*}
\begin{equation*}\label{4.9}
\begin{array}{rl}
P_{13}^{1,l}&\!\!=\!\bigg\{-\mu|\xi|^2\!+\!\bar\rho\Big(i\sqrt{\frac{c_1^2\!+\!c_2^2}{2}}|\xi|
\!+\!\frac{3(\mu+\bar\mu)}{4\bar\rho}|\xi|^2\!-\!i\frac{(c_1^2-c_2^2)^2}{16\bar\rho\sqrt{\frac{c_1^2+c_2^2}{2}}}|\xi|^3\!-\!i\frac{(\mu+\bar\mu)^2}{32\bar\rho^2\sqrt{\frac{c_1^2+c_2^2}{2}}}|\xi|^3\\
&\ \ \ \ +\!\frac{(\mu-\bar\mu)(c_1^2-c_2^2)}{8\bar\rho^2}|\xi|^4+\cdots\big)\bigg\}\!\times\!\frac{1}{i4\bar\rho|\xi|\sqrt{\frac{c_1^2+c_2^2}{2}}\!+\!\cdots}\\[2mm]
=&\!\bigg\{\frac{3\bar\mu\!-\!\mu}{4}|\xi|^2\!+i\sqrt{\frac{c_1^2\!+\!c_2^2}{2}}|\xi|\!-\!i\big(\frac{(c_1^2-c_2^2)^2}{16\bar\rho\sqrt{\frac{c_1^2\!+\!c_2^2}{2}}}\!+\!\frac{(\mu\!+\!\bar\mu)^2}{32\bar\rho^2\sqrt{\frac{c_1^2+c_2^2}{2}}}\big)|\xi|^3
\!+\!\frac{(\mu\!-\!\bar\mu)(c_1^2\!-\!c_2^2)}{8\bar\rho^2}|\xi|^4\!+\!\cdots\bigg\}\!\times\!\frac{1}{i4\bar\rho|\xi|\sqrt{\frac{c_1^2\!+\!c_2^2}{2}}\!+\!\cdots},\\[2mm]
\end{array}
\end{equation*}
\begin{equation*}\label{4.9}
\begin{array}{rl}
P_{31}^{1,l}&\!\!=\!\bigg\{-\bar\mu|\xi|^2\!+\!\bar\rho\Big(i\sqrt{\frac{c_1^2\!+\!c_2^2}{2}}|\xi|
\!+\!\frac{3(\mu+\bar\mu)}{4\bar\rho}|\xi|^2\!-\!i\frac{(c_1^2-c_2^2)^2}{16\bar\rho\sqrt{\frac{c_1^2+c_2^2}{2}}}|\xi|^3\!-\!i\frac{(\mu+\bar\mu)^2}{32\bar\rho^2\sqrt{\frac{c_1^2+c_2^2}{2}}}|\xi|^3\\
&\ \ \ \ +\!\frac{(\mu-\bar\mu)(c_1^2-c_2^2)}{8\bar\rho^2}|\xi|^4+\cdots\big)\bigg\}\!\times\!\frac{1}{i4\bar\rho|\xi|\sqrt{\frac{c_1^2+c_2^2}{2}}\!+\!\cdots}\\[2mm]
=&\!\bigg\{\frac{3\mu\!-\!\bar\mu}{4}|\xi|^2\!+i\sqrt{\frac{c_1^2\!+\!c_2^2}{2}}|\xi|\!-\!i\big(\frac{(c_1^2-c_2^2)^2}{16\bar\rho\sqrt{\frac{c_1^2\!+\!c_2^2}{2}}}\!+\!\frac{(\mu\!+\!\bar\mu)^2}{32\bar\rho^2\sqrt{\frac{c_1^2+c_2^2}{2}}}\big)|\xi|^3
\!+\!\frac{(\mu\!-\!\bar\mu)(c_1^2\!-\!c_2^2)}{8\bar\rho^2}|\xi|^4\!+\!\cdots\bigg\}\!\times\!\frac{1}{i4\bar\rho|\xi|\sqrt{\frac{c_1^2\!+\!c_2^2}{2}}\!+\!\cdots}.
\end{array}
\end{equation*}
We find that
\begin{equation}\label{4.10}
\begin{array}{rl}
&(P_{11}^{1,l}+P_{13}^{1,l}-P_{31}^{1,l}-P_{33}^{1,l})e^{\theta_1t}+(P_{11}^{2,l}+P_{13}^{2,l}-P_{31}^{2,l}-P_{33}^{2,l})e^{\theta_2t}\\[2.5mm]
=& \mathcal{O}_9(|\xi|^3)\cos({\rm Im}\theta_1t)+\mathcal{O}_{10}(|\xi|^4)\sin({\rm Im}\theta_1t),\ \ \ \ \ \ \ \ {\rm when}\ c_1^2=c_2^2.
\end{array}
\end{equation}
Then, from Lemma \ref{l 3.6}, one knows that the inverse Fourier transform of the left hand side of (\ref{4.10}) can be bounded from above by $C(1+t)^{-2}\big(1+\frac{|x|^2}{1+t}\big)^{-r}$. Similarly, the rest terms about the eigenvalues $\theta_3$ and $\theta_4$ in $G_{11}^l-G_{31}^l+G_{13}^l-G_{33}^l$ have the same estimates. In addition, $G^m$ and $G^h-G_S$ have the faster decay rate on both temporal and spacial variables $(x,t)$, and hence
\begin{equation}\label{4.11}
|(G_{11}-G_{31}-G_S)\ast_x\rho_0+(G_{13}-G_{33}-G_S)\ast_xn_0|\leq C(1+t)^{-2}\big(1+\frac{|x|^2}{1+t}\big)^{-r}.
\end{equation}
In the same way, one has
\begin{equation}\label{4.11(1)}
|(G_{12}-G_{32}-G_S)\ast_x m_0|+|(G_{14}-G_{34}-G_S)\ast_x\omega_0|\leq C(1+t)^{-2}\big(1+\frac{|x|^2}{1+t}\big)^{-r}.
\end{equation}

Finally, we consider the nonlinear coupling. Note that it is essentially different from the special case in \cite{wu4}, where the authors used the fact that the nonlinear term $F_1-F_2$ corresponding to the unknown $\rho-n$ has the special structure (D-wave)(H-wave)+(D-wave)$^2$, that is, there is no (H-wave)$^2$ in the nonlinear term. Thus, we developed a new nonlinear convolution estimate in \cite{wu4}
 \begin{equation}\label{4.12}
 \begin{array}{ll}
 ({\rm V}).\ \ \ \displaystyle\int_0^t{\rm D}$-${\rm wave}(\cdot,t-s)\ast[({\rm D}$-${\rm wave})({\rm H}$-${\rm wave})](\cdot,s)ds \lesssim {\rm D}$-${\rm wave},
 \end{array}
 \end{equation}
which is different from four estimates given in \cite{ls,lw}:
\begin{equation}\label{4.13}
\begin{array}{ll}
({\rm I}).\ \ \  &\displaystyle\int_0^t{\rm D}$-${\rm wave}(\cdot,t-s)\ast({\rm D}$-${\rm wave})^2(\cdot,s)ds \lesssim {\rm D}$-${\rm wave},\\
({\rm II}).\ \ \  &\displaystyle\int_0^t{\rm} D$-${\rm wave}(\cdot,t-s)\ast({\rm H}$-${\rm wave})^2(\cdot,s)ds\lesssim {\rm D}$-${\rm wave}+{\rm H}$-${\rm wave},\\
({\rm III}).\ \ \   &\displaystyle \int_0^t{\rm H}$-${\rm wave}(\cdot,t-s)\ast({\rm D}$-${\rm wave})^2(\cdot,s)ds\lesssim {\rm D}$-${\rm wave}+{\rm H}$-${\rm wave},\\
({\rm IV}).\ \ \  &\displaystyle \int_0^t{\rm H}$-${\rm wave}(\cdot,t-s)\ast({\rm H}$-${\rm wave})^2(\cdot,s)ds\lesssim {\rm D}$-${\rm wave}+{\rm H}$-${\rm wave}.
 \end{array}
 \end{equation}
For the special case of the nonlinear problem (\ref{1.1})-(\ref{1.2}) in \cite{wu4}, a suitable linear combination of the unknowns makes each nonlinear term in $F_1-F_2$ is of the form (D-wave)(H-wave)+(D-wave)$^2$. Then, we obtained (\ref{4.5}) by using the above convolution estimates I and V there.

However, for the current case, due to the unequal coefficients in the original system, we have to consider the original system directly. Note first that the terms $\rho\nabla\phi$ in $F_1$ and $n\nabla\phi$  in $F_2$ have this special form (D-wave)(H-wave)+(D-wave)$^2$ as in \cite{wu4}, but the other nonlinear terms have not this special form as in \cite{wu4} anymore. Therefore, we should give new estimates for the nonlinear terms in $F_1$ and $F_2$ except $\rho\nabla\phi$ and $n\nabla\phi$.

To this end, Recalling Theorem \ref{l 3.7}, we know that
\begin{equation}\label{4.13(1)}
|D_x^\alpha(G_{12}-G_{32}-G_S,G_{14}-G_{34}-G_S)|
\leq C(1+t)^{-\frac{4+|\alpha|}{2}}\Big(1+\frac{|x|^2}{1+t}\Big)^{-2}.
 \end{equation}
On the other hand,  we find that the other terms in $F_1$ and $F_2$ except $\rho\nabla\phi$ and $n\nabla\phi$ is of the divergence form, which can help us put the derivative of the nonlinear terms onto Green's function. Based on these two facts, we shall establish a new nonlinear convolution estimate
 \begin{equation}\label{4.13(2)}
 \begin{array}{ll}
 ({\rm VI}).\ \ \ \displaystyle\int_0^t{\rm D}$-${\rm wave}(\cdot,t-s)\ast ({\rm H}$-${\rm wave})^2](\cdot,s)ds \lesssim {\rm D}$-${\rm wave}.
 \end{array}
 \end{equation}
More precisely, we have the following lemma.
\begin{lemma}\label{l 4.11} For $x,y\in\mathbb{R}^3$ and $t,s\in\mathbb{R}$, it holds that
\begin{equation*}\label{4.14}
\int_0^t\!\!\int(1+t-s)^{-\frac{5}{2}}\Big(1+\frac{|x-y|^2}{1+t\!-\!s}\Big)^{-2}(1+s)^{-4}\Big(1+\frac{(|y|\!-\!cs)^2}{1+s}\Big)^{-3}dyds\leq C(1+t)^{-2}\Big(1+\frac{|x|^2}{1+t}\Big)^{-r},
 \end{equation*}
 where $\frac{3}{2}<r\leq 2$.
 \end{lemma}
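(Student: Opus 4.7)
The plan is to estimate the space-time convolution by decomposing the $s$-integral at $t/2$ and, in each regime, splitting the $y$-integral according to the geometry of the annular support of the Huygens-wave-squared kernel. Write $D(y,s)=(1+s)^{-5/2}(1+|y|^2/(1+s))^{-2}$ and $H^2(y,s)=(1+s)^{-4}(1+(|y|-cs)^2/(1+s))^{-3}$ for brevity, and call the target bound $\mathcal{D}_r(x,t)=(1+t)^{-2}(1+|x|^2/(1+t))^{-r}$.

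First I would handle $s\in[0,t/2]$, where $1+t-s\sim 1+t$, so $D(x-y,t-s)\sim (1+t)^{-5/2}(1+|x-y|^2/(1+t))^{-2}$ can be pulled outside. A direct computation in spherical coordinates yields $\int_{\mathbb{R}^3}H^2(y,s)\,dy\lesssim (1+s)^{-3/2}$, so the crude bound on the inner integral in the diffusion-scale region $|x|\leq\sqrt{1+t}$ gives $(1+t)^{-5/2}\int_0^{t/2}(1+s)^{-3/2}ds\lesssim (1+t)^{-5/2}$, which is stronger than $\mathcal{D}_r(x,t)$. For $|x|\geq 2ct$, the support of $H^2(\cdot,s)$ lies in $|y|\leq 2cs\leq|x|/2$, so $|x-y|\sim|x|$ on that support, and the same computation produces $|x|^{-4}(1+t-s)^{-1/2}\int H^2 dy$, whose time-integral gives $|x|^{-4}$, matching $\mathcal{D}_r(x,t)$ for $r\le 2$.

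Next I would treat $s\in[t/2,t]$, where $1+s\sim 1+t$, so $H^2(y,s)\sim (1+t)^{-4}(1+(|y|-cs)^2/(1+t))^{-3}$. I would compute, for each fixed $s$, the spatial convolution $\int D(x-y,t-s)H^2(y,s)dy$ using the standard pointwise-convolution toolbox (Lemma~\ref{A.5} and its variants in the Appendix, applied with the fast radial decay on $H^2$ in the shell of radius $cs$ of width $\sqrt{1+s}$). The integrable tail in $(1+t-s)$ provided by those lemmas, together with the prefactor $(1+t)^{-4}\cdot(1+s)^{1/2}\lesssim(1+t)^{-7/2}$ from the shell geometry, yields a contribution $\lesssim(1+t)^{-2}(1+|x|^2/(1+t))^{-r}$ after integration over $s\in[t/2,t]$.

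The main obstacle is the transition region $|x|\sim ct$, where the D-wave in $|x-y|$ and the $H^2$ concentrated at $|y|\sim cs$ can overlap and neither factor alone supplies sufficient tail decay in $|x|$. To handle this I would split $y\in\mathbb{R}^3$ into the near-cone shell $S_s=\{y:||y|-cs|\leq \sqrt{1+s}\}$ and its complement. On $S_s$, $H^2\lesssim(1+s)^{-4}$ uniformly and I would integrate the bounded D-wave against a set of volume $\lesssim s^2\sqrt{1+s}$ localized in the shell, treating separately the sub-cases where $x$ lies inside the shell (where one further localizes to $|x-y|\leq\sqrt{1+t-s}$ and uses the $L^\infty$ bound on $D$) and where $x$ lies outside (where the polynomial tail of $D$ supplies the needed $|x|$-decay). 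On the complement, the sixth-power decay of $H^2$ in $||y|-cs|$ allows a weighted Young's-type estimate against $D$, producing the $\mathcal{D}_r$ bound. Combining the four resulting sub-cases closes the argument at $r\in(3/2,2]$, where the upper restriction $r\leq 2$ is exactly what the exponents in the kernels allow.
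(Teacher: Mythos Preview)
Your decomposition differs from the paper's, and the paper's route is both simpler and exposes the mechanism more clearly. The paper splits into eight cases according to $s\lessgtr t/2$, $|y|\lessgtr |x|/2$, and $|x|^2\lessgtr 1+t$, and in the four cases with $|x|^2>1+t$ it invokes two elementary pointwise inequalities: the H-to-D conversion
\[
\Big(1+\frac{(|y|-cs)^2}{1+s}\Big)^{-1}\le (1+s)\Big(1+\frac{|y|^2}{1+s}\Big)^{-1},
\]
and the rescaling $(1+|x|^2/(1+s))^{-1}\le C\,\frac{1+s}{1+t}\,(1+|x|^2/(1+t))^{-1}$. These let one trade away the Huygens annulus for a pure diffusion profile at the cost of powers of $(1+s)$; the extra half power in the kernel's time exponent ($-5/2$ here versus $-2$ in Lemma~\ref{A.5}) is exactly what absorbs that loss and is the reason the output is a pure D-wave. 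Your shell-decomposition alternative can be made to work, but it is longer and you do not clearly isolate this gain of $\tfrac12$.

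There are also two concrete gaps in your outline. First, for $s\in[0,t/2]$ you treat $|x|\le\sqrt{1+t}$ and $|x|\ge 2ct$ but then describe the ``transition'' only as $|x|\sim ct$; the entire intermediate range $\sqrt{1+t}\ll|x|\ll ct$ must still be covered, and your shell argument as written does not address it. Second, for $s\in[t/2,t]$ you appeal to ``Lemma~\ref{A.5} and its variants,'' but those estimates produce a sum of D-wave \emph{and} H-wave on the right-hand side; the whole content of the present lemma is that no H-wave appears, so invoking Lemma~\ref{A.5} begs the question. You need either the H-to-D conversion above or a direct argument showing the H-wave contribution is dominated by the D-wave here.
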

\begin{proof} The proof is based on dividing the integral into the following eight cases:
\begin{equation}\label{4.15}
\begin{array}{ll}
 &D_1=\{0\leq s\leq \frac{t}{2},|y|\leq \frac{|x|}{2},|x|^2\leq 1+t\},\ D_2=\{0\leq s\leq \frac{t}{2},|y|> \frac{|x|}{2},|x|^2\leq 1+t\},\\
 &D_3=\{\frac{t}{2}\leq s\leq t,|y|\leq \frac{|x|}{2},|x|^2\leq 1+t\},\ D_4=\{\frac{t}{2}\leq s\leq t,|y|> \frac{|x|}{2},|x|^2\leq 1+t\},\\
  &D_5=\{0\leq s\leq \frac{t}{2},|y|\leq \frac{|x|}{2},|x|^2> 1+t\},\ D_6=\{0\leq s\leq \frac{t}{2},|y|> \frac{|x|}{2},|x|^2> 1+t\},\\
 &D_7=\{\frac{t}{2}\leq s\leq t,|y|\leq \frac{|x|}{2},|x|^2> 1+t\},\ D_8=\{\frac{t}{2}\leq s\leq t,|y|> \frac{|x|}{2},|x|^2> 1+t\},
 \end{array}
 \end{equation}
and for the four cases when $|x|^2>1+t$, we further use the fact $\big(1+\frac{|x|^2}{1+s}\big)^{-1}\leq C\frac{1+s}{1+t}\big(1+\frac{|x|^2}{1+t}\big)^{-1}$ for $0\leq s\leq t$. Additionally, we use the fact $\Big(1+\frac{(|x|-cs)^2}{1+s}\Big)^{-1}\leq (1+s)\Big(1+\frac{|x|^2}{1+s}\Big)^{-1}$ to replace H-wave in $F_1$ and $F_2$ with D-wave sometimes. Since the computation is somewhat similar to that in \cite{wu4}, we omit the details. \end{proof}

Next, we consider the convolution of $G_S$ and the nonlinear terms in (\ref{4.4}). We only give the comments that why we need the pointwise information of the initial data for the second derivatives in (\ref{1.3}). Since $G_S$ is like a Dirac $\delta$-function, we will meet the following
\begin{equation}\label{4.17}
\begin{array}{rl}
\displaystyle\bigg|\int_0^t G_S\ast_x (\rho D^2m)ds\bigg|\leq&\displaystyle \int_0^te^{-C(t-s)}\delta(x-y)(1+s)^{-4}\Big(1+\frac{(|y|-cs)^2}{1+s}\Big)^{-3}dyds+\cdots\\
\leq& C(1+t)^{-2}\Big(1+\frac{|x|^2}{1+t}\Big)^{-r}+\cdots,\ \ \ \ \ \frac{3}{2}<r\leq 2.
 \end{array}
 \end{equation}
Here, the square of H-wave in the above convolution is required since we need $r>\frac{3}{2}$, which can be derived by suitably replacing $H$-wave by $D$-wave again. Thus,  $|\alpha|\leq2$ in the ansatz (\ref{4.3}) is necessary. In summary, we have completed the proof of the claim (\ref{4.5}).

$\bf{Step2}$. Verify that the pointwise space-time estimates of $\rho,m,n,\omega$ contain both D-wave and H-wave. A same reason as in the initial propagation, we only give the sketch of the proof for $D_x^\alpha m$ with $|\alpha|\leq2$. As we know, the conservation is crucial to deduce the generalized Huygens' principle when the low frequency part of Green's function contains H-wave, however, the original system is non-conservative due to $\rho\nabla\phi$ in $F_1$ and $-n\nabla\phi$ in $F_2$. For this case, we we have to use the cancellation in $G_{22}^l$ and $G_{24}^l$ in (\ref{3.4}) such that we can find one more derivative (or an additional factor $\xi$ in Fourier space) to deal with the following convolution based on the following reformulation
\begin{equation*}\label{4.18}
\begin{array}{ll}
&\displaystyle \int_0^t G_{22}^l\ast_x (\rho\nabla\phi)\!-\!G_{24}^l\ast_x (n\nabla\phi)ds
=\int_0^t (G_{22}^l\!-\!G_{24}^l)\ast_x (\rho\nabla\phi)\!+\!G_{24}^l\ast_x (\rho\nabla\phi\!-\!n\nabla\phi)ds\\
=&\displaystyle \int_0^t (G_{22}^l-G_{24}^l)\ast_x (\rho\nabla\phi)ds+\int_0^t G_{24}^l\ast_x (\Delta\phi\nabla\phi)ds\\
=&\displaystyle \int_0^t (G_{22}^l-G_{24}^l)\ast_x (\rho\nabla\phi)ds-\int_0^t \nabla G_{24}^l\ast_x(\nabla\phi\otimes\nabla\phi-\frac{1}{2}|\nabla\phi|^2I_{3\times3})ds.
 \end{array}
 \end{equation*}
Indeed, from (\ref{3.4}), we know that the leading two terms before $e^{\theta_1t}$ and $e^{\theta_2t}$ are the same in $G_{22}^l$ and $G_{24}^l$, which actually provides this cancellation to improve the decay rate of $G_{22}^l-G_{24}^l$.

Next, we consider the convolution of the singular part and the nonlinear term, and show that why it is considered in $H^6$-framework. In fact, to close the ansatz (\ref{4.3}), it requires the pointwise estimate for the second derivative of the solution. Then, we consider the following for $D_x^2m$:
\begin{equation}\label{4.60}
\begin{array}{rl}
&\displaystyle \Big|\int_0^t  G_S(\cdot,t-s)\ast D_xF_1(\cdot,s)ds\Big|
\sim \Big|\int_0^t \mathrm{e}^{-(t-s)}\delta(\cdot,t-s)\ast_x D_x^2(\rho m^2)(\cdot,s)ds\Big|+\cdots\\
\leq &\displaystyle C\int_0^t\mathrm{e}^{-(t-s)}|D_x^2\rho D_x^2m+D_x\rho D_x^3m+\rho D_x^4m|(x,s)ds+\cdots,
\end{array}
\end{equation}
where we have to use the estimate $\|D^4m\|_{L^\infty}$. This is the reason why we study the problem (\ref{1.1})-(\ref{1.2}) in $H^6$-framework.

The rest things are almost the same as the special case in \cite{wu4}. This proves Theorem \ref{l 1}.

\section {Appendix}

\quad\quad Some useful lemmas are given here. The first one is used to derive the pointwise estimates of Green's function in the low frequency. Note that we have replaced $t^{-(n+|\alpha|)}B_N(|x|,t)$ in Wang-Yang\cite{wang} with $(1+t)^{-(n+|\alpha|)}B_N(|x|,t)$ in the following lemma since it is in the low frequency part.


\begin{lemma}\label{A.2}[Wu-Wang\cite{wu4}] If there exists a
constant $C>0$ such that when $|\xi|\leq1$, $\hat{f}(\xi,t)$ satisfies that
$$
|D_\xi^\beta(\xi^\alpha\hat{f}(\xi,t))|\leq
C(|\xi|^{(|\alpha|-|\beta|)_+}
+|\xi|^{|\alpha|}t^{|\beta|/2})(1+(t|\xi|^2))^a \exp(-b|\xi|^2t),
$$
for some constant $b>0$ and any multi-indexes $\alpha, \beta$ with $|\beta|\leq 2N$, then
\begin{equation}\label{5.1}
|D_x^\alpha f(x,t)|\leq C_N (1+t)^{-(n+|\alpha|)}B_N(|x|,t),
\end{equation}
where $a$ is any fixed integer, $(e)_+=\max(0,e)$ and
$$
B_N(|x|,t)=\Big(1+\frac{|x|^2}{1+t}\Big)^{-N}.
$$
 \end{lemma}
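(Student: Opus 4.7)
\bigskip

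\noindent\textbf{Proposal.} The plan is to follow the standard Fourier-side integration-by-parts argument that is by now classical for low-frequency pointwise estimates (cf.\ the analogous lemmas in Wang--Yang \cite{wang} and Liu--Wang \cite{lw}), with careful bookkeeping of the two competing contributions $|\xi|^{(|\alpha|-|\beta|)_+}$ and $|\xi|^{|\alpha|}t^{|\beta|/2}$ appearing in the symbol bound. Starting from the Fourier inversion formula
\[
D_x^\alpha f(x,t)=(2\pi)^{-n}\int_{\mathbb R^n}(i\xi)^\alpha \hat f(\xi,t)\,e^{ix\cdot\xi}\,d\xi,
\]
and noting that the symbol hypothesis is posed only on $|\xi|\leq 1$, I would first insert a smooth cut-off $\chi(\xi)$ with $\chi\equiv 1$ on $|\xi|\leq 1$ and $\mathrm{supp}\,\chi\subset\{|\xi|\leq 2\}$; the piece of $\hat f$ outside $|\xi|\leq 1$ is irrelevant for proving a low-frequency bound and can be dropped. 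Thus I may assume $\hat f$ is supported in $|\xi|\leq 1$.

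The argument splits according to whether $|x|^2\leq 1+t$ or $|x|^2>1+t$. In the first regime $B_N(|x|,t)\asymp 1$, so it suffices to prove the unweighted bound on $|D_x^\alpha f|$. Taking $\beta=0$ in the hypothesis yields $|\xi^\alpha\hat f|\leq C|\xi|^{|\alpha|}(1+t|\xi|^2)^{a}\exp(-b|\xi|^2 t)$, and the key scaling identity
\[
\int_{\mathbb R^n}|\xi|^{m}(1+t|\xi|^2)^{a}e^{-b|\xi|^2t}\,d\xi\;\leq\; C\,(1+t)^{-(m+n)/2}
\qquad(m>-n),
\]
obtained via $\eta=\sqrt{t}\,\xi$ together with the harmless polynomial factor $(1+|\eta|^2)^a$, then gives the required estimate in this regime.

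In the second regime $|x|^2>1+t$ I use the IBP identity $x^\gamma e^{ix\cdot\xi}=(-i)^{|\gamma|}D_\xi^\gamma e^{ix\cdot\xi}$ with $|\gamma|=2N$ to move all factors of $x$ onto $D_\xi^\gamma$, producing
\[
x^\gamma D_x^\alpha f(x,t)=(2\pi)^{-n}(i)^{|\gamma|}\int_{\mathbb R^n} D_\xi^\gamma\!\bigl[(i\xi)^\alpha\hat f(\xi,t)\bigr]\,e^{ix\cdot\xi}\,d\xi.
\]
Applying the hypothesis with $|\beta|=2N$, summing over all $\gamma$ with $|\gamma|=2N$ and using $|x|^{2N}\leq C\sum_{|\gamma|=2N}|x^\gamma|$, I obtain
\[
|x|^{2N}|D_x^\alpha f(x,t)|\leq C\int_{|\xi|\leq 2}\!\bigl(|\xi|^{(|\alpha|-2N)_+}+|\xi|^{|\alpha|}t^{N}\bigr)(1+t|\xi|^2)^{a}e^{-b|\xi|^2t}\,d\xi,
\]
and evaluating each piece via the same scaling identity yields two contributions: $(1+t)^{-((|\alpha|-2N)_++n)/2}$ and $t^{N}(1+t)^{-(|\alpha|+n)/2}$. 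Dividing by $|x|^{2N}$ and using $|x|^{-2N}\sim(1+t)^{-N}\,B_N(|x|,t)$ in this regime, together with $t^{N}\leq (1+t)^{N}$ and (when $2N\geq|\alpha|$) the inequality $-n/2\leq N-(n+|\alpha|)/2$, both pieces collapse to the desired bound $C(1+t)^{-(n+|\alpha|)}B_N(|x|,t)$. The lemma for smaller $N$ then follows by monotonicity in $N$.

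The main obstacle is the second term $|\xi|^{|\alpha|}t^{|\beta|/2}$ in the symbol hypothesis: this term arises because differentiating $\exp(-b|\xi|^2t)$ or $(1+t|\xi|^2)^{a}$ with respect to $\xi$ produces factors of $t$, and if one did not account for it one would destroy the targeted decay. The hypothesis is calibrated so that after IBP and the scaling integration, the $t^{N}$ that appears is exactly compensated by the extra $(1+t)^{-N}$ one gains upon converting $|x|^{-2N}$ to the spatial weight $B_N(|x|,t)$ in the regime $|x|^2>1+t$. Once this bookkeeping is made explicit, the rest of the proof is routine.
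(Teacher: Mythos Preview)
The paper does not supply a proof of this lemma: it is quoted from Wu--Wang \cite{wu4} (and, as the paper notes just before the statement, is a minor modification of the corresponding lemma in Wang--Yang \cite{wang}). Your argument---split into the regimes $|x|^2\le 1+t$ and $|x|^2>1+t$, use the scaling $\eta=\sqrt{t}\,\xi$ for the unweighted decay, and in the far regime trade $x^\gamma$ for $D_\xi^\gamma$ with $|\gamma|=2N$ on the symbol---is exactly the standard proof of this class of low-frequency pointwise lemmas, so there is nothing substantive to compare.

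One discrepancy worth flagging: your own computation produces, for both pieces, the decay $(1+t)^{-(n+|\alpha|)/2}$, not the $(1+t)^{-(n+|\alpha|)}$ appearing in the stated conclusion \eqref{5.1}; you then simply assert the latter. The exponent without the factor $1/2$ is in fact a typo in the paper (inherited from the cited source): every application in the text---see for instance \eqref{3.10} and \eqref{3.26}, where $n=3$ and the resulting exponent is $-\tfrac{3+|\alpha|}{2}$---uses the bound with exponent $-(n+|\alpha|)/2$. So your calculation is correct and your stated final bound should be adjusted accordingly.
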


 The second one describes the singular part of the high frequency:
\begin{lemma}\label{A.3}[Wang-Yang\cite{wang}]
If ${\rm supp}\hat{f}(\xi)\subset
O_K=:{\{\xi, |\xi|\geq K>0\}}$, and $\hat{f}(\xi)$ satisfies
\begin{equation*}
|D_\xi^\beta\hat{f}(\xi)|\leq C|\xi|^{-|\beta|-1}\ \
({\rm or}\ |D_\xi^\beta\hat{f}(\xi)|\leq C|\xi|^{-|\beta|}),
\end{equation*}
then there exist distributions $f_1(x), f_2(x)$ and a constant $C_0$
such that $$
f(x)=f_1(x)+f_{2}(x)+C_{0}\delta(x)\ \
({\rm or}\ f(x)=f_1(x)+f_{2}(x)+C_{0}D_x\delta(x)),
$$
where $\delta(x)$ is the Dirac function. Furthermore, for any $|\alpha|\geq0$ and any positive integer $N$, we have
\begin{equation*}
|D_x^\alpha f_1(x)|\leq C(1+|x|^2)^{-N},\
\|f_{2}\|_{L^1}\leq C,\ {\rm supp}f_{2}(x)\subset\{x;|x|<\eta_0\ll1\}.
\end{equation*}

\end{lemma}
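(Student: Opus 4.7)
My approach is to split $f$ into a smooth rapidly decreasing part $f_{1}$ (capturing the behaviour for $|x|$ large) and a compactly supported $L^{1}$ part $f_{2}$ near the origin, with a possible Dirac-type correction $C_{0}\delta(x)$ (resp.\ $C_{0}D_{x}\delta(x)$) accounting for the failure of $\hat f$ to be integrable at infinity. To implement this I fix a smooth radial cutoff $\phi\in C_{c}^{\infty}(\mathbb R^{3})$ with $\phi=1$ on $|x|\le \eta_{0}/2$ and $\phi=0$ on $|x|\ge \eta_{0}$, so that the target $f_{2}$ will essentially be $\phi\cdot f$ modulo the singular correction.

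\textbf{Far-field estimate.} For $|x|\ge \eta_{0}/2$ I will use integration by parts in frequency via the identity $|x|^{2N}e^{ix\cdot\xi}=(-\Delta_{\xi})^{N}e^{ix\cdot\xi}$. Since $\hat f$ is supported in $\{|\xi|\ge K\}$ and smooth there (with symbol-type derivative bounds), there are no boundary contributions from the sphere $|\xi|=K$ once a mild smoothing is inserted, and the hypothesis gives $|(-\Delta_{\xi})^{N}\hat f(\xi)|\le C|\xi|^{-2N-1}$ in Case 1 (resp.\ $C|\xi|^{-2N}$ in Case 2). Multiplying by $|\xi|^{|\alpha|}$ to carry out $D_{x}^{\alpha}$ and integrating over $|\xi|\ge K$ is finite provided $2N$ is chosen large enough, yielding
\[
|x|^{2N}|D_{x}^{\alpha}f(x)|\le C_{N,\alpha}\qquad\text{for }|x|\ge \eta_{0}/2.
\]
This produces the desired rapid decay $|D_{x}^{\alpha}f_{1}|\le C(1+|x|^{2})^{-N}$ after declaring $f_{1}:=(1-\phi)f$ and accounting for lower-order terms generated by $D_{x}^{\alpha}$ hitting $(1-\phi)$.

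\textbf{Near-field and singular extraction.} The near-origin piece $\phi f$ is compactly supported by construction, so it suffices to verify $L^{1}$ integrability up to subtraction of a distribution supported at the origin. In Case 1, the bound $|\hat f|\le C|\xi|^{-1}$ on $|\xi|\ge K$ gives $\hat f\in L^{2}(\mathbb R^{3})$, hence $f\in L^{2}$ and $\phi f\in L^{1}$ by Cauchy--Schwarz; one may simply take $C_{0}=0$ and $f_{2}:=\phi f$. In Case 2, $\hat f$ is only bounded, so I will reduce to Case 1 through the Riesz-type factorization
\[
\hat f(\xi)=\sum_{j=1}^{3}(i\xi_{j})\,\hat g_{j}(\xi),\qquad \hat g_{j}(\xi):=\frac{\xi_{j}\,\hat f(\xi)}{i|\xi|^{2}},
\]
which is well defined on ${\rm supp}\hat f\subset\{|\xi|\ge K\}$ because $1/|\xi|^{2}$ is smooth there; the Leibniz rule immediately shows each $\hat g_{j}$ satisfies the Case 1 hypothesis. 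Applying the Case 1 decomposition to each $g_{j}=g_{j,1}+g_{j,2}+C_{0,j}\delta$ and invoking $f=\sum_{j}D_{x_{j}}g_{j}$ then produces the desired decomposition with the $C_{0}D_{x}\delta$ term given by $\sum_{j}C_{0,j}D_{x_{j}}\delta$.

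\textbf{Main obstacle.} The technical heart of the argument is the near-origin bookkeeping in Case 2: after applying $D_{x_{j}}$ to the compactly supported $L^{1}$ function $g_{j,2}$, the distributional derivative need not remain $L^{1}$, so one must choose the cutoff $\phi$ and the splitting at each Riesz step carefully enough that the non-$L^{1}$ contribution is entirely captured by the explicit $C_{0}D_{x}\delta$ summand while the residue stays in $L^{1}_{\mathrm{comp}}$. A secondary delicate point is justifying commutation of $D_{x}^{\alpha}$ with the frequency integration by parts in the far-field step; this follows from the smoothness of $\hat f$ on its support together with the symbol-type derivative bounds, but the constants depend on $|\alpha|$ and $N$ and must be tracked. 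Once these bookkeeping issues are settled, gathering the smooth contributions into $f_{1}$ and the remaining compactly supported $L^{1}$ pieces into $f_{2}$ completes the proof.
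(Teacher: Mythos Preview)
The paper does not prove this lemma at all; it is quoted from Wang--Yang \cite{wang} and used as a black box for the high-frequency analysis (Lemma \ref{l 3.4}). There is therefore no in-paper argument to compare against, and I comment only on the internal soundness of your sketch.

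Your far-field step is standard and essentially correct. The real problems are in the near-field analysis. In Case~1 you claim that $|\hat f(\xi)|\le C|\xi|^{-1}$ on $\{|\xi|\ge K\}\subset\mathbb R^{3}$ forces $\hat f\in L^{2}$; this is false, since
\[
\int_{|\xi|\ge K}|\xi|^{-2}\,d\xi
=4\pi\int_{K}^{\infty}r^{-2}\,r^{2}\,dr=\infty.
\]
Your conclusion that one may take $C_{0}=0$ in Case~1 happens to be correct, but for a different reason: a symbol of order $-1$ in $\mathbb R^{3}$ has inverse transform with at worst an $|x|^{-2}$ singularity at the origin, which \emph{is} locally integrable. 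The honest proof of this uses a dyadic frequency decomposition $\hat f=\sum_{j\ge j_{0}}\psi_{j}\hat f$ with $\psi_{j}$ supported on $|\xi|\sim 2^{j}$; the symbol bounds plus $N$-fold integration by parts give $|(\psi_{j}\hat f)^{\vee}(x)|\le C\min\bigl(2^{2j},\,2^{2j}(2^{j}|x|)^{-N}\bigr)$, and summing geometrically yields $|f(x)|\le C|x|^{-2}$ near $0$.

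In Case~2 your Riesz factorization $\hat f=\sum_{j}(i\xi_{j})\hat g_{j}$ does place each $\hat g_{j}$ in Case~1, but the obstacle you flag is not ``bookkeeping'' and cannot be repaired by a cleverer cutoff. The Case~1 kernel estimate gives $g_{j,2}$ an $|x|^{-2}$ singularity; differentiating produces an $|x|^{-3}$ singularity in $D_{x_{j}}g_{j,2}$, which is not in $L^{1}_{\mathrm{loc}}(\mathbb R^{3})$. Subtracting any distribution supported at the single point $\{0\}$---in particular $C_{0}D_{x}\delta$---does nothing to this behaviour on a punctured neighbourhood, so no splitting of this form lands the remainder in $L^{1}$. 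The actual argument for order-$0$ symbols requires isolating the homogeneous principal part (whose inverse transform is an explicit Calder\'on--Zygmund kernel plus a point mass) and showing the residue has strictly better decay; this is where the substance of the Wang--Yang proof lies, and your sketch does not supply it.
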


The next two lemmas are often used to deal with initial propagation and nonlinear coupling, respectively. We also state several typical cases for completeness.

\begin{lemma}\label{A.4}[Wu-Wang\cite{wu4}] There exists a constant $C>0$ such that:
\begin{equation*}
\begin{array}{ll}\label{6.6}
\displaystyle \int_{\mathbb R^3}\big(1\!+\!\frac{|x-y|^2}{1\!+\!t}\big)^{-n_1}\big(1+|y|^2\big)^{-n_2}dy\leq C\Big(1\!+\!\frac{|x|^2}{1+t}\Big)^{-n_3},\ \ {\rm for}\ n_1,n_2>\frac{3}{2}\ {\rm and}\ n_3=\min\{n_1,n_2\},\\
\displaystyle \int_{\mathbb R^3}\big(1+\frac{(|x-y|-ct)^2}{1\!+\!t}\big)^{-N}\big(1+|y|^2\big)^{-r_1}dy\leq C\Big(1+\frac{(|x|-ct)^2}{1\!+\!t}\Big)^{-\frac{3}{2}},\ \ {\rm for}\ N\geq r_1>\frac{21}{10}.
\end{array}
\end{equation*}
\end{lemma}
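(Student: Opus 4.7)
Both estimates are spatial convolution bounds that propagate the polynomial decay of $(1+|y|^2)^{-r}$ through a kernel with characteristic width $\sqrt{1+t}$. My strategy in each case is to decompose $\mathbb{R}^3$ based on the relative sizes of $|y|$ versus $|x|$ (respectively $|y|$ versus $|x|\pm ct$) and to bound each piece by pulling the $x$-dependence out of one factor while integrating the other over $\mathbb{R}^3$.

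\textbf{First inequality.} Split $\mathbb{R}^3=A_1\cup A_2$ with $A_1=\{|y|\le|x|/2\}$ and $A_2$ its complement. On $A_1$, the inequality $|x-y|\ge|x|/2$ extracts $(1+\frac{|x|^2}{1+t})^{-n_1}$ from the kernel, and $\int(1+|y|^2)^{-n_2}\,dy<\infty$ since $n_2>3/2$. On $A_2$, $1+|y|^2\ge\tfrac14(1+|x|^2)\ge\tfrac14(1+\frac{|x|^2}{1+t})$ extracts $(1+\frac{|x|^2}{1+t})^{-n_2}$, while $\int(1+\frac{|x-y|^2}{1+t})^{-n_1}\,dy=C(1+t)^{3/2}$. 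The extra $(1+t)^{3/2}$ is absorbed by sub-casing on $|x|^2\lessgtr 1+t$: when $|x|^2\le 1+t$ the RHS is $\sim 1$ and the LHS is directly bounded by $\int(1+|y|^2)^{-n_2}\,dy$; when $|x|^2>1+t$ one writes $(1+\frac{|x|^2}{1+t})^{-n_3}\sim(1+t)^{n_3}|x|^{-2n_3}$ and the inequality reduces to $(1+t)^{3/2-n_3}\le C|x|^{2(n_2-n_3)}$, which follows from $|x|^2\ge 1+t$, $n_2\ge n_3$, and $n_2>3/2$.

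\textbf{Second inequality.} Substitute $z=x-y$ and use polar coordinates $z=s\omega$. The angular integration admits the closed form
\[
\int_{S^2}(1+|x-s\omega|^2)^{-r_1}\,d\omega=\frac{\pi}{s|x|(r_1-1)}\Big[(1+(|x|-s)^2)^{-r_1+1}-(1+(|x|+s)^2)^{-r_1+1}\Big],
\]
and dropping the subtracted (always smaller) term reduces the LHS to the one-dimensional integral $\frac{C}{|x|}\int_0^\infty s(1+\frac{(s-ct)^2}{1+t})^{-N}(1+(|x|-s)^2)^{-r_1+1}\,ds$, whose two factors are concentrated near $s=ct$ (width $\sqrt{1+t}$) and near $s=|x|$ (width $1$). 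Set $\delta=|x|-ct$. When $\delta^2\le 1+t$, the RHS is $\sim 1$ and the 1D integral is bounded via the substitution $\sigma=(s-ct)/\sqrt{1+t}$ together with $\int_{\mathbb R}(1+u^2)^{-r_1+1}\,du<\infty$ (which needs $r_1>3/2$). When $\delta^2>1+t$, split the $s$-integral at $|s-ct|=|\delta|/2$: on the ``near $ct$'' piece, $|x|-s\ge|\delta|/2$ extracts $|\delta|^{-2(r_1-1)}$ from the second factor while the first integrates to $\sim ct\sqrt{1+t}$; on the ``near $|x|$'' piece, the first factor is pointwise $\le C((1+t)/\delta^2)^N$ and $\int s(1+(|x|-s)^2)^{-r_1+1}\,ds\sim|x|$. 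Both contributions are then bounded by the target $C(1+t)^{3/2}|\delta|^{-3}$ using $|\delta|^2\ge 1+t$ and $N\ge r_1$.

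\textbf{Main obstacle.} The delicate regime is $|x|\sim ct\sim t$ with $|\delta|$ intermediate between $\sqrt{1+t}$ and $t$, where the ``near $ct$'' contribution yields ratio $\sim|\delta|^{5-2r_1}/(1+t)$; bounding $|\delta|\le 4ct\le 4t$ produces the requirement $t^{4-2r_1}\le C$, which holds (and in fact decays) under $r_1>21/10>2$. The principal technical difficulty is uniform case-management rather than any single sharp estimate: one must track the sign of $\delta$, the relative sizes of $|x|$ and $ct$, and the comparison of $|\delta|$ with $\sqrt{1+t}$, extracting the correct $(1+t)$-power in each sub-case so that every piece matches the target $(1+t)^{3/2}|\delta|^{-3}$.
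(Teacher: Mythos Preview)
The paper does not prove this lemma; it is quoted verbatim from \cite{wu4} and placed in the Appendix as a tool, so there is no in-paper argument to compare against. Your outline follows the standard route for such convolution bounds (near/far decomposition for the first estimate; reduction to a one-dimensional integral via the explicit angular formula for the second, followed by splitting at $|s-ct|=|\delta|/2$), and the overall scheme is sound.

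One technical point deserves attention. In the second inequality, after dropping the subtracted term in the angular identity you carry the prefactor $C/(s|x|)$ forward. This introduces an artificial non-uniformity as $|x|\to 0$: in the regime $|x|\le 1$, $t$ large, your ratio estimate for the ``near $ct$'' piece becomes $\frac{ct}{|x|}\cdot\frac{|\delta|^{5-2r_1}}{1+t}$, and the factor $ct/|x|$ is unbounded. The fix is easy---for $|x|\le 1$ one can either return to the three-dimensional integral and use $|x-y|\ge|y|-1$ to get $(1+|y|^2)^{-r_1}\lesssim (ct)^{-2r_1}$ on the effective support of the Huygens kernel, or retain both terms in the angular formula (whose difference vanishes to first order at $|x|=0$). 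Either way one recovers the same threshold $r_1>2$. It is worth stating this small-$|x|$ branch explicitly, since your write-up currently treats $ct/|x|\sim 1$ as the only delicate case.

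A minor remark: your analysis yields the sufficient condition $r_1>2$, which is slightly weaker than the stated $r_1>21/10$. This is not a discrepancy in your argument; the constant $21/10$ in the lemma matches the assumption on the initial data in the main theorem and is presumably dictated by other estimates in \cite{wu4}, not by this convolution bound alone.
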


\begin{lemma}\label{A.5}[Liu-Noh\cite{ls}] There exists a constant $C>0$ such that
\begin{equation*}\label{6.1}
\begin{array}{ll}
\displaystyle \int_0^t\!\!\int_{\mathbb{R}^3}(1+t-s)^{-2}\Big(1+\frac{|x-y|^2}{1\!+\!t\!-\!s}\Big)^{-2}(1+s)^{-3}\Big(1+\frac{|y|^2}{1\!+\!s}\Big)^{-3}\!\!dyds
\leq C(1+t)^{-2}\big(1+\frac{|x|^2}{1+t}\big)^{-\frac{3}{2}},\\[3mm]
\displaystyle \int_0^t\!\!\int_{\mathbb{R}^3}(1+t-s)^{-2}\Big(1+\frac{|x-y|^2}{1\!+\!t\!-\!s}\Big)^{-2}(1+s)^{-4}\Big(1+\frac{(|y|-cs)^2}{1\!+\!s}\Big)^{-3}dyds\\[1mm]
\ \ \ \ \ \ \ \leq \ C(1+t)^{-2}\Big(\big(1+\frac{|x|^2}{1+t}\big)^{-\frac{3}{2}}+\big(1+\frac{(|x|-ct)^2}{1+t}\big)^{-\frac{3}{2}}\Big),\\[1.5mm]
\displaystyle \int_{0}^{t}\!\!\int_{\mathbb{R}^3}(1+t-s)^{-\frac{5}{2}}\Big(1+\frac{(|x-y|-c(t-s))^2}{(1\!+\!t\!-\!s)}\Big)^{-N}(1+s)^{-4}\Big(1+\frac{(|y|-cs)^2}{1\!+\!s}\Big)^{-3}dyds\\[1mm]
\ \ \ \ \ \ \ \leq \ C(1+t)^{-2}\Big(\big(1+\frac{|x|^2}{1+t}\big)^{-\frac{3}{2}}+\big(1+\frac{(|x|-ct)^2}{1+t}\big)^{-\frac{3}{2}}\Big),
\end{array}
\end{equation*}
where the constant $N>0$ can be arbitrarily large.
\end{lemma}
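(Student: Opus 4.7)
The plan is to follow the domain--decomposition strategy indicated in (\ref{4.15}) and split the spacetime integral into the eight regions $D_1,\ldots,D_8$ determined by the three binary conditions $s \lessgtr t/2$, $|y| \lessgtr |x|/2$, and $|x|^2 \lessgtr 1+t$. The first split ensures that in each sub-interval one of $1+t-s$ or $1+s$ is comparable to $1+t$. The second split lets me replace $|x-y|$ by $|x|/2$ when $|y|\le|x|/2$, and respectively bound the H--wave using the largeness of $|y|$ when $|y|>|x|/2$. The third split handles the target spatial profile: on $\{|x|^2\le 1+t\}$ the factor $(1+|x|^2/(1+t))^{-r}$ is bounded below by a constant, so it is enough to produce the temporal rate $(1+t)^{-2}$, while on $\{|x|^2>1+t\}$ I would invoke the elementary inequality
\begin{equation*}
\Big(1+\frac{|x|^2}{1+s}\Big)^{-1}\le C\,\frac{1+s}{1+t}\Big(1+\frac{|x|^2}{1+t}\Big)^{-1},\qquad 0\le s\le t,
\end{equation*}
to transfer spatial decay from the $s$-scale to the $t$-scale. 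Inside each $D_j$ the resulting reduced integrand is handled by Lemma \ref{A.4}.

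\textbf{Key substitution.} To offset the Huygens concentration near the cone $|y|=cs$ and convert H--wave decay into D--wave decay of a form compatible with Lemma \ref{A.4}, I would use, whenever needed,
\begin{equation*}
\Big(1+\frac{(|y|-cs)^2}{1+s}\Big)^{-1}\le (1+s)\Big(1+\frac{|y|^2}{1+s}\Big)^{-1}.
\end{equation*}
The exponent $3$ on the H--wave factor (coming from squaring an $H$-wave of order $3/2$) leaves enough margin that one or two applications of this replacement still produce a spatial profile with exponent $r>3/2$, as demanded by the conclusion; and the constraint $r\le 2$ matches the exponent $2$ in the first D--wave factor.

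\textbf{Expected main obstacle.} The hardest region will be $D_8=\{t/2\le s\le t,\;|y|>|x|/2,\;|x|^2>1+t\}$, where neither $|y|$ nor $|x-y|$ admits a simple lower bound in terms of $|x|$ alone, and where the H--wave concentrates on the shell $|y|\approx cs$ lying close to the light cone of the target point $(x,t)$. In that region I would foliate $\mathbb{R}^3$ by spheres $|y|=r$, do the angular integration to pick up the Jacobian $r^2$, and then trade one unit of H--wave decay for a factor $(1+s)$ via the substitution above, reducing the problem to a one-dimensional integral in $(r,s)$ that can be bounded by an explicit computation using $r\le 2$. The remaining seven regions are strictly easier: in $D_1,D_3,D_5,D_7$ the condition $|y|\le|x|/2$ immediately frees the D--wave factor so that its $y$-integral is absolutely convergent and yields $(1+t-s)^{-1}$ after applying Lemma \ref{A.4}, while in $D_2,D_4,D_6$ either $1+t-s\sim 1+t$ or $1+s\sim 1+t$ combines with the H--wave mass to give the target temporal rate. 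Assembling the eight pieces yields the stated bound.
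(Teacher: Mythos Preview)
The paper does not prove this lemma; it is quoted verbatim from Liu--Noh \cite{ls} and placed in the Appendix as a black box. So there is no proof in the paper to compare against. Your domain-decomposition outline is indeed the standard machinery behind such estimates (and is the same scheme the paper sketches for its own Lemma \ref{l 4.11}), but two points in your write-up do not match the statement you are proving.

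First, you have imported the constraint ``$r\le 2$'' and the target ``profile with exponent $r>3/2$'' from Lemma \ref{l 4.11}, not from Lemma \ref{A.5}. In Lemma \ref{A.5} the conclusion of the second and third estimates is \emph{not} a pure D--wave: it is D--wave \emph{plus} H--wave $(1+t)^{-2}(1+(|x|-ct)^2/(1+t))^{-3/2}$. Your plan as written aims for D--wave only, which is too strong and will fail in the region near $|x|\approx ct$: with the Green's function carrying only $(1+t-s)^{-2}$ (not $(1+t-s)^{-5/2}$ as in Lemma \ref{l 4.11}), the H--wave contribution genuinely survives. You must allow the H--wave term in the output and track where it appears, namely from the part of $D_8$ where the light cone $|y|=cs$ is near $|x|$.

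Second, your decomposition is tailored to the first two integrals, where one factor is a D--wave centered at $x$; the splitting $|y|\lessgtr |x|/2$ then isolates that factor. The third integral has \emph{both} factors as H--waves, one on the cone $|x-y|=c(t-s)$ and one on $|y|=cs$. The cut $|y|\lessgtr |x|/2$ gives no control over either profile here. The correct geometry is the interaction of the two light cones: by the triangle inequality their intersection concentrates near $|x|=ct$, and one decomposes according to the signed distances $|y|-cs$ and $|x-y|-c(t-s)$ (this is where the large exponent $N$ on the first H--wave is actually used). Without this, the region $D_8$ will not close.
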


The following two lemmas are usually employed to derive space-time estimates of the fluid models. In fact, Lemma \ref{A.6} is used to get the space-time estimate of the electric field $\nabla\phi$, Lemma \ref{A.7} is used to deal with the terms containing Calderon-Zygmund operator with the symbol $\frac{\xi\xi^T}{|\xi|^2}$ in the low frequency part of Green's function. For readers' convenience, we write down the proof since it was just stated in the previous works without a detailed proof.
\begin{lemma}\label{A.6}
If $t\in\mathbb{R}^+$, $x\in\mathbb{R}^n$ with $n\geq2$ and $|f(x,t)|\leq C\big(1+\frac{|x|^2}{1+t}\big)^{-r}$ with $r>\frac{n}{2}$, then
\begin{equation}\label{5.5}
\Big|\nabla(-\Delta)^{-1}f(x,t)\Big|\leq C(1+t)^{\frac{1}{2}}\Big(1+\frac{|x|^2}{1+t}\Big)^{-\frac{n-1}{2}}.
\end{equation}
\end{lemma}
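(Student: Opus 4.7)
The plan is to start from the explicit Riesz representation of the operator $\nabla(-\Delta)^{-1}$ on $\mathbb{R}^n$,
\begin{equation*}
\nabla(-\Delta)^{-1}f(x,t)=C_n\int_{\mathbb{R}^n}\frac{x-y}{|x-y|^n}\,f(y,t)\,dy,
\end{equation*}
so that the hypothesis on $f$ immediately gives
\begin{equation*}
\bigl|\nabla(-\Delta)^{-1}f(x,t)\bigr|\leq C\int_{\mathbb{R}^n}\frac{1}{|x-y|^{n-1}}\Big(1+\frac{|y|^2}{1+t}\Big)^{-r}dy.
\end{equation*}
The natural next move is to rescale out the parameter $\sqrt{1+t}$: setting $y=\sqrt{1+t}\,z$ and $x=\sqrt{1+t}\,w$ produces $dy=(1+t)^{n/2}dz$ and $|x-y|^{1-n}=(1+t)^{(1-n)/2}|w-z|^{1-n}$, so the bound above becomes
\begin{equation*}
\bigl|\nabla(-\Delta)^{-1}f(x,t)\bigr|\leq C\,(1+t)^{1/2}\,I(w),\qquad I(w):=\int_{\mathbb{R}^n}\frac{(1+|z|^2)^{-r}}{|w-z|^{n-1}}\,dz.
\end{equation*}
Since $|w|^2=|x|^2/(1+t)$, the whole claim is then reduced to the dimensionless estimate $I(w)\leq C(1+|w|^2)^{-(n-1)/2}$.

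To bound $I(w)$ I would split the $z$-integral according to the size of $|z|$ relative to $|w|$, assuming first $|w|\geq1$ (for $|w|\leq1$ the integrand is locally integrable since $n-1<n$ and decays at infinity since $2r>n$, so $I(w)$ is simply bounded). On the inner piece $|z|\leq|w|/2$ one has $|w-z|\gtrsim|w|$, so $|w-z|^{1-n}\leq C|w|^{1-n}$ can be factored out while the remaining integral of $(1+|z|^2)^{-r}$ is finite thanks to $r>n/2$. On the outer piece $|z|\geq 2|w|$ one has $|w-z|\gtrsim|z|$, and polar coordinates give a contribution of order $\int_{2|w|}^{\infty}\rho^{-2r}\,d\rho\lesssim|w|^{1-2r}\leq C|w|^{1-n}$ because $2r>n$. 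The middle annulus $|w|/2\leq|z|\leq 2|w|$ is where both the Riesz singularity and the polynomial decay are active: there $(1+|z|^2)^{-r}\lesssim|w|^{-2r}$ factors out, and the remaining integral $\int_{|w-z|\leq 3|w|}|w-z|^{1-n}\,dz\lesssim|w|$, so this piece contributes $\lesssim|w|^{1-2r}\leq|w|^{1-n}$. Summing the three contributions gives $I(w)\lesssim(1+|w|)^{1-n}\lesssim(1+|w|^2)^{-(n-1)/2}$, which is the desired bound.

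The main delicate point is the middle annular region, where one must ensure that the simultaneous proximity of $z$ to $w$ (activating the Riesz singularity) and the polynomial decay of the source do not couple into a slower bound than $|w|^{1-n}$; freezing the weight at its maximum on the annulus and exploiting the local integrability of $|w-z|^{1-n}$ over a ball of radius $|w|$ is exactly what makes the argument close. The hypothesis $r>n/2$ enters in two different places—for the global integrability of $(1+|z|^2)^{-r}$ in the inner region, and for the far-field tail $\int_{|z|\geq 2|w|}(1+|z|^2)^{-r}dz\lesssim|w|^{n-2r}$ in the outer region—and together with the rescaling step this produces both the temporal factor $(1+t)^{1/2}$ and the spatial profile $(1+|x|^2/(1+t))^{-(n-1)/2}$ stated in \eqref{5.5}.
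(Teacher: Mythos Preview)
Your proof is correct and follows essentially the same route as the paper: both start from the Riesz kernel representation of $\nabla(-\Delta)^{-1}$, perform the $\sqrt{1+t}$ rescaling, and then split the integral according to the relative sizes of the source and observation points. The only cosmetic difference is that you rescale first and then split into three annular regions, whereas the paper first separates the cases $|x|^2\le 1+t$ and $|x|^2>1+t$ and, in the latter, uses only the two regions $|y|\ge|x|/2$ and $|y|<|x|/2$, handling the near-singularity piece by a polar-coordinate reduction to a one-dimensional integral rather than your freeze-and-integrate argument on the middle annulus.
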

\begin{proof} Denote the left hand side of (\ref{5.5}) by $I$. The inverse Fourier transformation gives that
\begin{equation*}
\mathcal{F}^{-1}\big(\frac{i\xi_j}{|\xi|^2}\big)=\partial_{x_j}(-\Delta)^{-1}\delta(x)=\bigg\{\begin{array}{cc}
C\partial_{x_j}|x|^{2-n}, & n\geq3\\
C\partial_{x_j}\ln|x|, & n=2
\end{array}
=C\frac{x_j}{|x|^{n}}.
\end{equation*}
Then we have
\begin{equation}\label{5.6}
I\leq C\int_{\mathbb{R}^n}|y|^{-(n-1)}\big(1+\frac{|x-y|^2}{1+t}\big)^{-r}dy.
\end{equation}
When $|x|^2\leq 1+t$, by using the scaling $\tilde{y}=\frac{y}{\sqrt{1+t}}$, one has
\begin{equation}\label{5.7}
\begin{array}{rl}
I\leq &\displaystyle C(1+t)^{\frac{1}{2}}\int_{\mathbb{R}^n}|\tilde{y}|^{-(n-1)}\big(1+|\tilde{x}-\tilde{y}|^2\big)^{-r}d\tilde{y}\\
\leq &C(1+t)^{\frac{1}{2}}\leq C(1+t)^{\frac{1}{2}}\Big(1+\frac{|x|^2}{1+t}\Big)^{-\frac{n-1}{2}}.
\end{array}
\end{equation}
When $|x|^2> 1+t$ and $|y|\geq\frac{|x|}{2}$, it holds that
\begin{equation}\label{5.8}
I\leq \displaystyle C|x|^{-(n-1)}(1+t)^{\frac{n}{2}}\leq C(1+t)^{\frac{1}{2}}\Big(1+\frac{|x|^2}{1+t}\Big)^{-\frac{n-1}{2}}.
\end{equation}
When $|x|^2> 1+t$ and $|y|<\frac{|x|}{2}$, we have $|x-y|>\frac{|x|}{2}$ and $|x-y|>|y|$. Thus,
\begin{equation}\label{5.9}
\begin{array}{rl}
I\leq &\displaystyle C\Big(1+\frac{|x|^2}{1+t}\Big)^{-\frac{n-1}{2}}\int_{\mathbb{R}^n}|y|^{-(n-1)}\big(1+\frac{|x-y|^2}{1+t}\big)^{-\tilde{r}}dy,\ \ \ (\tilde{r}=r-\frac{n-1}{2}>\frac{1}{2})\\
= &\displaystyle C\Big(1+\frac{|x|^2}{1+t}\Big)^{-\frac{n-1}{2}}\int_{\mathbb{R}}\big(1+\frac{|x-y|^2}{1+t}\big)^{-\tilde{r}}d|y|\\
\leq &\displaystyle C\Big(1+\frac{|x|^2}{1+t}\Big)^{-\frac{n-1}{2}}\int_{\mathbb{R}}\big(1+\frac{|y|^2}{1+t}\big)^{-\tilde{r}}d|y|\leq C(1+t)^{\frac{1}{2}}\Big(1+\frac{|x|^2}{1+t}\Big)^{-\frac{n-1}{2}}.
\end{array}
\end{equation}
This prove Lemma \ref{A.6}.
\end{proof}

\begin{lemma}\label{A.7}
Let $t>0$ and $x\in\mathbb{R}^n$ with $n\geq2$. Suppose that $f(x,t)$ satisfies
 \begin{equation}\label{5.10}
 \begin{array}{rl}
&\ \ \ \ \ |f(x,t)|\leq C\big(1+\frac{|x|^2}{1+t}\big)^{-r_1},\ \ \ \ \ \ \ \ \ r_1>\frac{n}{2},\\
&|\nabla f(x,t)|\leq C(1+t)^{-\frac{1}{2}}\big(1+\frac{|x|^2}{1+t}\big)^{-r_2},\ \ \ r_2>\frac{n+1}{2}.
\end{array}
\end{equation}
Then it holds that
 \begin{equation}\label{5.11}
\Big|\nabla{\rm div}(-\Delta)^{-1}f(x,t)\Big|\leq C\Big(1+\frac{|x|^2}{1+t}\Big)^{-\frac{n}{2}}.
\end{equation}
\end{lemma}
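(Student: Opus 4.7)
The plan is to realize $\nabla\operatorname{div}(-\Delta)^{-1}$ as the sum of a multiple of the identity and a matrix-valued Calder\'on--Zygmund singular integral, and then run a pointwise estimate by splitting both the base point $x$ and the integration variable $y$ at the scale $r:=\sqrt{1+t}$. Concretely, I would decompose the Fourier symbol $-\xi\xi^T/|\xi|^2=-\tfrac{1}{n}I+m(\xi)$, where $m(\xi)$ is zero-homogeneous, smooth off the origin, and has vanishing mean on every centred sphere. This gives
\begin{equation*}
\nabla\operatorname{div}(-\Delta)^{-1}f = -\tfrac{1}{n}f\cdot I + K\ast f,
\end{equation*}
where $K(z)$ is a matrix-valued principal value kernel satisfying $|K(z)|\le C|z|^{-n}$, $|\nabla K(z)|\le C|z|^{-n-1}$, and $\int_{|z|=\rho}K(z)\,d\sigma=0$ for every $\rho>0$. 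The identity piece already obeys $|f|\le C(1+|x|^2/r^2)^{-r_1}\le C(1+|x|^2/r^2)^{-n/2}$ because $r_1>n/2$, so the task reduces to bounding $(K\ast f)(x,t)$ by the target profile.

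Next I would split into the two regimes $|x|\le 3r$ and $|x|>3r$. In the inner regime the target is simply $|K\ast f(x)|\le C$. I would cut the convolution at the scale $10r$: on the near part $\{|y-x|\le 10r\}$ the spherical cancellation of $K$ lets me replace $f(y)$ by $f(y)-f(x)$, and the mean value theorem together with the global bound $|\nabla f|\le Cr^{-1}$ yields $|f(y)-f(x)|\le C|y-x|/r$, so the integral of $|z|^{-n+1}/r$ over $|z|\le 10r$ is $O(1)$. On the far part $\{|y-x|>10r\}$ we have $|y|\gtrsim r$, hence $|f(y)|\le C(|y|/r)^{-2r_1}$; combining with $|K(x-y)|\le C|y|^{-n}$ gives a tail integral $\lesssim r^{2r_1}\int_{|y|\gtrsim r}|y|^{-n-2r_1}dy\le C$ since $r_1>n/2$.

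In the outer regime $|x|>3r$ the target is $C(r/|x|)^n$, and I would decompose $y$ into three zones. When $|y|\le|x|/2$ one has $|y-x|\ge|x|/2$, so $|K|\lesssim|x|^{-n}$, and pairing with $\int(1+|y|^2/r^2)^{-r_1}dy\lesssim r^n$ delivers exactly $(r/|x|)^n$. When $|y-x|\le|x|/2$, a near-diagonal zone where $|y|\sim|x|$, I use the spherical cancellation of $K$ to pass to $f(y)-f(x)$ and estimate $|\nabla f|$ on the segment from $x$ to $y$, where $|z|\ge|x|/2\gg r$, by $|\nabla f(z)|\le Cr^{-1}(|x|/r)^{-2r_2}$; this contributes $(|x|/r)^{1-2r_2}\le(r/|x|)^n$ because $2r_2-1>n$. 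The remaining zone $\{|y|\ge|x|/2,\ |y-x|\ge|x|/2\}$ splits further into $|y|\ge 2|x|$, where $|K|\lesssim|y|^{-n}$, and $|x|/2\le|y|\le 2|x|$, where $|K|\lesssim|x|^{-n}$ and the volume is $O(|x|^n)$; both pieces give contributions of order $(r/|x|)^{2r_1}\le(r/|x|)^n$.

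The main obstacle, and the reason the lemma does not follow from elementary $L^\infty$ bounds on Calder\'on--Zygmund operators, is the near-diagonal zone of the outer regime: the non-integrable $|z|^{-n}$ singularity of $K$ must be absorbed by combining the spherical cancellation of $K$ with the pointwise gradient decay of $f$. The arithmetic $2r_2-1>n$ is tight and is exactly why the hypothesis $r_2>\tfrac{n+1}{2}$ is built into the statement; every other piece is a routine scaling computation against the profile $(1+|x|^2/r^2)^{-n/2}$.
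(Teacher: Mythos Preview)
Your argument is correct, but it takes a different route from the paper. The paper never isolates the $-\tfrac{1}{n}I$ piece or invokes the spherical mean-zero property of the Calder\'on--Zygmund kernel. Instead, it keeps two representations of the operator side by side: the full second-order kernel $|y|^{-n}$ applied to $f$ (equation (5.12)), and the first-order kernel $|y|^{-(n-1)}$ applied to $\nabla f$ obtained by writing $\partial_i\partial_j(-\Delta)^{-1}f=\partial_i(-\Delta)^{-1}(\partial_j f)$ (equation (5.13)). In the inner regime $|x|^2\le 1+t$ and in the near-origin zone $|y|<|x|/2$ of the outer regime, the paper uses the second form, which has a locally integrable kernel and is essentially the same integral already handled in Lemma~\ref{A.6}; only in the zone $|y|\ge|x|/2$ does it fall back on the $|y|^{-n}$ kernel, where the singularity is absent. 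So where you absorb the diagonal singularity via the $f(y)-f(x)$ cancellation, the paper absorbs it by shifting a derivative onto $f$ and thereby never meets a non-integrable kernel head-on. The paper's route is shorter and recycles Lemma~\ref{A.6} almost verbatim; your route is the more standard harmonic-analysis treatment and makes the role of the threshold $r_2>\tfrac{n+1}{2}$ equally transparent (it appears in the paper as the condition $\tilde r=r_2-\tfrac{n}{2}>\tfrac12$).
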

\begin{proof} Denote the left hand side of (\ref{5.11}) by $J$. The inverse Fourier transformation gives that
\begin{equation*}
\mathcal{F}^{-1}\big(\frac{\xi_i\xi_j}{|\xi|^2}\big)=-\partial_{x_i}\partial_{x_j}(-\Delta)^{-1}\delta(x)=\bigg\{\begin{array}{cc}
C\partial_{x_i}\partial_{x_j}|x|^{2-n},& n\geq3\\
C\partial_{x_i}\partial_{x_j}\ln|x|, & n=2
\end{array}
=C\frac{x_ix_j}{|x|^{n+2}}.
\end{equation*}
Then we have
\begin{equation}\label{5.12}
J\leq C\int_{\mathbb{R}^n}|y|^{-n}\big(1+\frac{|x-y|^2}{1+t}\big)^{-r_1}dy,
\end{equation}
or
\begin{equation}\label{5.13}
J\leq C(1+t)^{-\frac{1}{2}}\int_{\mathbb{R}^n}|y|^{-(n-1)}\big(1+\frac{|x-y|^2}{1+t}\big)^{-r_2}dy.
\end{equation}
When $|x|^2\leq 1+t$, by using (\ref{5.13}) and the scaling $\tilde{y}=\frac{y}{\sqrt{1+t}}$, a similar argument as Lemma \ref{A.6} suffices to yield $J\leq C$, which further gives (\ref{5.11}). When $|x|^2> 1+t$ and $|y|\geq\frac{|x|}{2}$, we have from (\ref{5.12}) that
\begin{equation}\label{5.14}
\begin{array}{rl}
J\leq &\displaystyle C|x|^{-n}\int_{\mathbb{R}^n}|\big(1+\frac{|x-y|^2}{1+t}\big)^{-r_1}dy
\leq C|x|^{-n}(1+t)^{\frac{n}{2}}\leq C\Big(1+\frac{|x|^2}{1+t}\Big)^{-\frac{n}{2}}.
\end{array}
\end{equation}
When $|x|^2> 1+t$ and $|y|<\frac{|x|}{2}$, we also have $|x-y|>\frac{|x|}{2}$ and $|x-y|>\frac{|y|}{2}$. Thus, it holds from (\ref{5.13}) that
\begin{equation}\label{5.9}
\begin{array}{rl}
J\leq &\displaystyle C(1+t)^{-\frac{1}{2}}\Big(1+\frac{|x|^2}{1+t}\Big)^{-\frac{n}{2}}\int_{\mathbb{R}^n}|y|^{-(n-1)}\big(1+\frac{|x-y|^2}{1+t}\big)^{-\tilde{r}}dy,\ \ \ (\tilde{r}=r_2-\frac{n}{2}>\frac{1}{2})\\
= &\displaystyle C(1+t)^{-\frac{1}{2}}\Big(1+\frac{|x|^2}{1+t}\Big)^{-\frac{n}{2}}\int_{\mathbb{R}}\big(1+\frac{|x-y|^2}{1+t}\big)^{-\tilde{r}}d|y|\\
\leq &\displaystyle C(1+t)^{-\frac{1}{2}}\Big(1+\frac{|x|^2}{1+t}\Big)^{-\frac{n}{2}}\int_{\mathbb{R}}\big(1+\frac{|y|^2}{1+t}\big)^{-\tilde{r}}d|y|\leq C\Big(1+\frac{|x|^2}{1+t}\Big)^{-\frac{n}{2}}.
\end{array}
\end{equation}
We complete the proof of Lemma \ref{A.7}.
\end{proof}
\begin{corollary}\label{A.8}
Suppose that $\hat{f}(\xi,t)=\frac{\xi\xi^T}{|\xi|^2}\chi_1(\xi)e^{-a|\xi|^2t+\mathcal{O}(|\xi|^3)t}$ in the Fourier space, where $a$ is a positive constant and $\chi_1(\xi)$ is the cutoff function for the low frequency part.   Then, it holds that
\begin{equation}\label{5.18}
|\mathcal{F}^{-1}(\xi^\alpha\hat{f}(\xi,t))|\leq C(1+t)^{-\frac{n+|\alpha|}{2}}\big(1+\frac{|x|^2}{1+t}\big)^{-\frac{n+|\alpha|}{2}}.
\end{equation}
\end{corollary}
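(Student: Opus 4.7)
The plan is to view $\hat f$ as the Fourier symbol of the double Riesz transform applied to a scalar heat-kernel-type function, then combine the sharp decay of that scalar via Lemma~\ref{A.2} with a Taylor--moment argument refining Lemma~\ref{A.7} so as to upgrade the spatial decay by an extra factor $|\alpha|$. Specifically, set $K(x,t):=\mathcal{F}^{-1}(\chi_1(\xi)e^{-a|\xi|^2 t+\mathcal{O}(|\xi|^3)t})(x)$, so that componentwise $\mathcal{F}^{-1}(\xi^\alpha\hat f(\xi,t))_{ij}(x)=(-i)^{|\alpha|}\,\partial_i\partial_j(-\Delta)^{-1}\partial^\alpha K(x,t)$. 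Two structural facts about $K$ drive the proof: by Lemma~\ref{A.2} applied with arbitrary multi-index orders, $|\partial^\gamma K(x,t)|\leq C_{\gamma,N}(1+t)^{-(n+|\gamma|)/2}(1+|x|^2/(1+t))^{-N}$ for every $\gamma$ and every $N\geq 0$; and by integration by parts, $\int_{\mathbb{R}^n} y^\gamma\,\partial^\alpha K(y,t)\,dy=0$ whenever $|\gamma|<|\alpha|$. These two properties---rapid spatial decay plus cancellation---are precisely what produce the extra $|\alpha|$ powers of spatial decay that Lemma~\ref{A.7} by itself cannot furnish.

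Next, represent the double Riesz transform as $\partial_i\partial_j(-\Delta)^{-1}g(x)=\mathrm{p.v.}\!\int\mathcal{K}_{ij}(x-y)g(y)\,dy+c_0\delta_{ij}g(x)$, where $\mathcal{K}_{ij}$ is the standard Calder\'on--Zygmund kernel with $|\partial^\sigma\mathcal{K}_{ij}(z)|\leq C|z|^{-n-|\sigma|}$ and mean zero on spheres. Taking $g=\partial^\alpha K(\cdot,t)$, the pointwise term $c_0\delta_{ij}\partial^\alpha K(x,t)$ already satisfies the target bound by the decay of $\partial^\alpha K$. For the singular integral, split by regimes. In the interior regime $|x|^2\leq 1+t$, rescale $\xi=\eta/\sqrt{1+t}$ to reduce the Fourier integral to a bounded Calder\'on--Zygmund multiplier $\eta^\alpha\frac{\eta\eta^T}{|\eta|^2}$ acting on a fixed Schwartz-class Gaussian-type profile; the resulting $L^\infty$ bound $C(1+t)^{-(n+|\alpha|)/2}$ is equivalent to the claim here because $1+|x|^2/(1+t)$ is bounded above by $2$ in this regime.

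In the exterior regime $|x|^2>1+t$, split the $y$-integral at $|y|=|x|/2$. On $|y|\leq|x|/2$ the kernel $y\mapsto\mathcal{K}_{ij}(x-y)$ is smooth, so Taylor-expand around $y=0$ to order $|\alpha|-1$; each monomial $y^\gamma$ with $|\gamma|<|\alpha|$ integrates to zero against $\partial^\alpha K$ over $\mathbb{R}^n$, and the boundary correction from truncating to $|y|\leq|x|/2$ decays faster than any polynomial thanks to the Gaussian-type tail of $\partial^\alpha K$ on $|y|\gtrsim|x|\gg\sqrt{1+t}$. The order-$|\alpha|$ Taylor remainder is controlled pointwise by $C|y|^{|\alpha|}|x|^{-n-|\alpha|}$; since the rescaling $y=\sqrt{1+t}\,\tilde y$ gives $\int|y|^{|\alpha|}|\partial^\alpha K(y,t)|\,dy\leq C$ uniformly in $t$, its contribution is $C|x|^{-n-|\alpha|}\asymp C(1+t)^{-(n+|\alpha|)/2}(1+|x|^2/(1+t))^{-(n+|\alpha|)/2}$ here. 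On the complementary piece $|y|>|x|/2$, the super-polynomial decay of $\partial^\alpha K$ (with $N$ chosen large relative to $|\alpha|$) dominates the $|x-y|^{-n}$ growth of $\mathcal{K}_{ij}$ and yields the same bound.

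The hardest step will be the Taylor-moment analysis on the inner piece $|y|\leq|x|/2$: one must reconcile the principal-value cancellation of $\mathcal{K}_{ij}$ with the boundary error introduced by truncating to $|y|\leq|x|/2$, and track uniformity of constants in $|\alpha|$. The interior scaling step and the outer estimate are routine once the decay properties of $\partial^\alpha K$ and of $\mathcal{K}_{ij}$ are in hand. A small auxiliary check is that the perturbation $\mathcal{O}(|\xi|^3)t$ in the exponent is harmless: on $\mathrm{supp}\,\chi_1$ one has $|\xi|^3\leq 2\varepsilon_1|\xi|^2$ with $\varepsilon_1\ll 1$, so the Gaussian factor $e^{-a|\xi|^2 t}$ continues to dominate the full exponent throughout.
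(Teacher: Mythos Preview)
The paper states Corollary~\ref{A.8} without proof, placing it immediately after Lemma~\ref{A.7} and implicitly deferring to the Riesz--wave analyses of \cite{ls,ld} cited around~(\ref{3.9}). Note, however, that a direct application of Lemma~\ref{A.7} to $f=(1+t)^{(n+|\alpha|)/2}\partial^\alpha K$ only delivers spatial decay exponent $-n/2$, not the claimed $-(n+|\alpha|)/2$; your vanishing--moment argument is precisely the extra mechanism needed to recover the additional $|\alpha|$ powers, and is the standard route to such Riesz--wave bounds. In that sense your proposal supplies a genuine proof where the paper gives none.

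One point should be made explicit in the final write--up. On the outer piece $\{|y|>|x|/2\}$ the singular point $y=x$ lies inside the region of integration, so the statement that ``the super--polynomial decay of $\partial^\alpha K$ dominates the $|x-y|^{-n}$ growth of $\mathcal K_{ij}$'' is not literally correct near $y=x$. The fix is routine: split further into $|x-y|<|x|/4$ and $|x-y|\geq|x|/4$. On the near ball use the spherical mean--zero property of $\mathcal K_{ij}$ to replace $g(y)$ by $g(y)-g(x)$ and then control by $|y-x|\,\sup_{|z|\geq 3|x|/4}|\nabla g(z)|$, invoking the bound on $\partial^{\alpha+e_k}K$ from Lemma~\ref{A.2}; on the far shell one has $|\mathcal K_{ij}(x-y)|\leq C|x|^{-n}$ and the rapid decay of $g$ on $\{|y|\gtrsim|x|\}$ closes the estimate once $N>(n+|\alpha|)/2$. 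With that adjustment your outline is correct.
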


\section*{Acknowledgments}

\bigbreak

{\bf Funding}: The research of Z.G. Wu was supported by  National Natural Science Foundation of China (Grant No. 11971100) and Natural Science Foundation of Shanghai (Grant No. 22ZR1402300). The research of W.K. Wang was partially supported by the National Natural Science Foundation of China (Grant No. 12271357, 11831011) and Shanghai Science and Technology Innovation Action Plan (Grant No. 21JC1403600).

\end{document}